\documentclass [11pt,reqno]{amsart}
\usepackage{amsmath,amssymb,verbatim,geometry,color}
\usepackage[all]{xy}
\usepackage{mathrsfs}
\usepackage{hyperref}
\usepackage[pdftex]{graphicx}
\usepackage{mathtools}
\usepackage[style=alphabetic, maxnames=99, maxalphanames=10, giveninits=true, isbn=false, doi=false, url=false]{biblatex}
\usepackage{quiver}

\def\XXint#1#2#3{{\setbox0=\hbox{$#1{#2#3}{\int}$ }
\vcenter{\hbox{$#2#3$ }}\kern-.6\wd0}}

\renewbibmacro{in:}{}

\newcommand{\A}{\mathbb{A}}

\newcommand{\C}{\mathbb{C}}
\newcommand{\G}{\mathbb{G}}

\newcommand{\N}{\mathbb{N}}
\renewcommand{\P}{\mathbb{P}}
 \newcommand{\Q}{\mathbb{Q}}
 \newcommand{\R}{\mathbb{R}}
 \newcommand{\Z}{\mathbb{Z}}
\newcommand{\T}{\mathbb{T}}

\newcommand{\fa}{\mathfrak{a}}
\newcommand{\fb}{\mathfrak{b}}

\newcommand{\fm}{\mathfrak{m}}

\newcommand{\ft}{\mathfrak{t}}

\renewcommand{\hom}{\mathrm{hom}}

\newcommand{\cF}{\mathcal{F}}

\newcommand{\cO}{\mathcal{O}}

\newcommand{\cR}{\mathcal{R}}

\newcommand{\Xdiv}{X^{\mathrm{div}}}

\newcommand{\Xqm}{X^{\mathrm{qm}}}

\newcommand{\Xval}{X^{\mathrm{val}}}

\renewcommand{\a}{\alpha}
\renewcommand{\b}{\beta}
\renewcommand{\d}{\delta}
\newcommand{\e}{\varepsilon}

\newcommand{\la}{\lambda}

\newcommand{\Om}{\Omega}

\renewcommand{\=}{\coloneqq}

\newcommand{\an}{\mathrm{an}}

\newcommand{\lc}{\mathrm{LC}}

\DeclareMathOperator{\FS}{FS}

\DeclareMathOperator{\Hom}{Hom}
\DeclareMathOperator{\lct}{lct}

\DeclareMathOperator{\Spec}{Spec}

\DeclareMathOperator{\supp}{supp}
\DeclareMathOperator{\vol}{vol}

\DeclareMathOperator{\im}{Im}

\DeclareMathOperator{\ord}{ord}

\DeclareMathOperator{\Proj}{Proj}

\DeclareMathOperator{\gr}{gr}

\DeclareMathOperator{\redu}{red}

\DeclareMathOperator{\Hnot}{H^0}

\DeclareMathOperator{\de}{d}

\DeclareMathOperator{\ratrk}{rat.rk} 
\DeclareMathOperator{\trdeg}{tr.deg}

\DeclareMathOperator{\Val}{Val}

\DeclareMathOperator{\QM}{QM}

\renewcommand{\div}{\mathrm{div}}

\newcommand{\val}{\mathrm{val}}

\newcommand{\na}{\mathrm{na}}

\numberwithin{equation}{section}       

\newtheorem{prop} {Proposition} [section]
\newtheorem{thm}[prop] {Theorem} 
\newtheorem{defi}[prop] {Definition}
\newtheorem{lem}[prop] {Lemma}
\newtheorem{cor}[prop]{Corollary}
\newtheorem{prop-def}[prop]{Proposition-Definition}

\newtheorem*{thmA}{Theorem A} 
 
\newtheorem*{thmB}{Theorem B} 
\newtheorem*{thmC}{Theorem C}

\newtheorem{exam}[prop]{Example}
\newtheorem{rmk}[prop]{Remark}

\theoremstyle{remark}

\title{On polarizations of affine Calabi--Yau varieties}
\date{\today}
\bibliography{infty}

\author{Yueqiao Wu}

\address{Dept of Mathematics\\
  Johns Hopkins University\\
  3400 N. Charles Street\\
  Baltimore, MD 21218\\
  USA}
\email{ywu347@jh.edu}

\begin{document}

\maketitle

\begin{abstract}
    Let $(X, \Omega)$ be an affine Calabi--Yau variety, by which we mean a klt affine variety $X$ with a non-vanishing algebraic volume form $\Omega$. In connection with complete Calabi--Yau metrics of Euclidean volume growth on $(X, \Omega)$, we study algebraic aspects of polarizations of $X$ given by a negative valuation. We show that such a valuation is finitely generated with a klt degeneration if and only if $X$ admits a dlt log Fano type compactification, generalizing a compactification considered by Tian--Yau. We further give a valuative criterion detecting K-semistability of the degeneration by a negative valuation.
\end{abstract}

\setcounter{tocdepth}{1}
\tableofcontents

%
%
%
%
\section*{Introduction}
%
%
\subsection*{Motivation}
Throughout the paper we work over the complex numbers. To begin with, let $(X=\Spec R, p, \Omega)$ be a smooth affine variety of complex dimension $n$, where $\Omega$ is a nonvanishing algebraic volume form. We will also assume that $X$
has no non-constant invertible functions, so that $\Omega$ is unique up to constants. A natural question, going back to~\cite{TY1, TY2}, is: when does there exist a complete Calabi--Yau metric of Euclidean volume growth, that is, a complete K\"ahler metric $\omega$ such that 
\[
\omega^n=i^{n^2}\Omega\wedge \bar\Omega,
\]
and the volume of balls has maximal growth: $\vol_\omega(p, r)\sim r^{2n}$ as $r\to \infty$. Since the first construction of such canonical metrics in~\cite{TY2}, there have been many recent developments in classifying such metrics.

One of the most natural places where such varieties arise is as rescaled Gromov--Hausdorff limits of polarized compact K\"ahler--Einstein manifolds. In this case, via metric rescalings, there are two natural metric tangent cones associated to $X$. More precisely, as $\la\to \infty$, $(X, p, \la\omega)$ converges to the (local) metric tangent cone $C_0(X)$ at $p$, and as $\la\to 0$, $(X, p, \la\omega)$ converges to the metric tangent cone $C_\infty(X)$ at infinity, in the sense of (pointed) Gromov--Hausdorff convergence. In~\cite{DS2}, it was shown that both metric tangent cones are log Fano cone singularities with a Calabi--Yau cone metric, and that there is a 2-step algebraic degeneration from $X$ to each tangent cone. In general, such 2-step degeneration theory is subsequently obtained in~\cite{GLiu, Szeuniq, SZ26}. Thanks to the work of~\cite{CH22, SunZhang}, all such complete Calabi--Yau manifolds with quadratic curvature decay are classified by possible Calabi--Yau cones at infinity. This raises a purely algebro-geometric question: can we describe all possible algebraic degenerations from $X$ to K-semi/polystable log Fano cones? 

From the algebro-geometric perspective, the degeneration to the local tangent cone has been studied extensively over the past decade, commonly known as the stable degeneration conjecture. As it turns out, neither the local tangent cone nor the degeneration process depends on the metric; rather they only depend on the singularity. Given any klt singularity $(X, p)$, by minimizing a normalized volume function over all valuations centered at $p$, as first introduced in~\cite{Linvol}, the unique minimizing valuation gives the first step degeneration of $X$ to a K-semistable log Fano cone singularity, which then uniquely degenerates to a K-polystable log Fano cone singularity, thanks to the work of~\cite{BlumMin, Xuqm, LWX, XZstab}.

In contrast, it is known that the metric tangent cone at infinity depends on the metric. Even on $\C^n, n\ge 3$, it was understood that there are infinitely many different complete Calabi--Yau metrics of Euclidean volume growth, giving rise to infinitely many Calabi--Yau cones at infinity, see~\cite{GaborCY, CR}. Nevertheless, the degeneration process is still algebraic and is in fact given by a valuation $v_\omega$ induced by the Calabi--Yau metric on $X$. Such valuations do not admit a center on $X$, and only take negative values on non-constant global functions. The 2-step degeneration process asserts that $X$ degenerates, via $v_\omega$ to a K-semistable log Fano cone, which further degenerates to a K-polystable log Fano cone that serves as the metric tangent cone at infinity. In this paper, we describe some first attempts in studying these valuations. 

\subsection*{Compactifications of affine Calabi--Yau varieties}
From now on, let $(X=\Spec R, \Omega)$ be a klt affine Calabi--Yau variety, by which we mean a klt affine variety with no non-constant global invertible functions, together with a non-vanishing algebraic $n$-form $\Omega$. Any valuation $v$ on $X$ defines a filtration on $R$ by vector spaces, via:
\[
\cF^\la_vR\coloneqq \{f\in R\mid v(f)\ge -\la\}.
\]
Denote by $X^\val_\infty$ the space of valuations on $X$ which do not admit a center on $X$. We say a valuation $v\in X^\val_\infty$ is \emph{negative} if $v(f)<0$ for any $f\in R\setminus k$. The filtration defined by a bounded negative valuation $v$ satisfies $\cF^0R=k$, and that each $\cF^\la_vR$ is finite dimensional. It is also clear from the definition that negative valuations can only exist if $X$ admits no global invertible functions. We further say such a negative valuation is \emph{finitely generated} if the associated graded
\[
\gr_vR\=\bigoplus_{\la\in \R_+}\frac{\cF^\la_vR}{\cF^{<\la}_vR}
\]
is a finitely generated $k$-algebra, where $\Phi_v$ is the negative of non-positive values in the value group of $v$. All such finitely generated valuations are quasimonomial, and we get a natural degeneration from $X$ to a polarized affine cone 
\[
(X_v=\Spec \gr_vR, \xi_v)
\]
in the sense of~\cite{CSz18}. If $X_v$ further has klt singularities, then it is a log Fano cone singularity, whose K-stability theory has been very well studied, see e.g.~\cite{LX18, LWX}. There is a YTD correspondence for log Fano cones, which states that a log Fano cone $(X, \xi)$ is K-polystable iff it admits a weak Calabi--Yau cone metric. In particular, any metric tangent cone at infinity is a K-polystable log Fano cone, induced by a finitely generated negative quasimonomial valuation on $X$. 

Our first result explores geometric restrictions on $X$ for having such valuations. The first observation is that a necessary condition for $X_v$ to have klt singularities is imposed by a \emph{log discrepancy} function $A_X\colon X^\na\to \R\cup\{\pm\infty\}$. For a divisorial valuation $\ord_E$, this is defined as $A_X(\ord_E)\coloneqq 1+\ord_E(\Omega)$, and in general, this is the largest lsc extension of $A_X\colon \Xdiv\to \R$. 
In analogy with the usual klt condition, if $X_v$ has klt singularities then $A_X(v)<0$. With this, we state
\begin{thmA}
    Let $(X=\Spec R, \Omega)$ be a klt affine Calabi--Yau variety with $v_0$ a negative valuation. Then the following are equivalent:
    \begin{itemize}
        \item [(i)] $v_0$ is finitely generated with $X_{v_0}$ having klt singularities;
        \item [(ii)] $X$ admits a \emph{Koll\'ar compactification}, that is, a compactification $(\bar X, E=\sum_{i=1}^r E_i)$ of $X$ such that $E=\bar X\setminus X$, each $E_i$ is a negative divisorial valuation with $A_X(E_i)<0$, $(\bar X, E)$ is dlt, $-(K_{\bar X}+E)$ is ample and $v_0\in \QM(\bar X, E)$ is of rational rank $r$.
    \end{itemize}
\end{thmA}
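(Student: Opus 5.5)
The plan is to model the argument on the local theory, where a quasimonomial valuation is finitely generated with klt associated graded exactly when it is monomial on a qdlt Fano type model (the Kollár component / Kollár model picture of Li--Xu and Xu--Zhuang). Here the role of the local model is played by a compactification. Throughout we use $R=\bigcup_{m}H^0(\bar X,\cO_{\bar X}(\sum_i m_iE_i))$, which holds whenever $\bar X\setminus X=E$ is the support of the boundary of the affine $X$.

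\textbf{(ii)$\Rightarrow$(i).} Since $(\bar X,E)$ is dlt and $-(K_{\bar X}+E)$ is ample, for small $\e>0$ the pair $(\bar X,(1-\e)E)$ is klt with $-(K_{\bar X}+(1-\e)E)$ still ample. Hence $\bar X$ is of log Fano type and a Mori dream space by BCHM. In particular the multigraded ring $\bigoplus_{m\in\Z^r}H^0(\bar X,\sum m_iE_i)$ is finitely generated.

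Because $v_0\in\QM(\bar X,E)$ has rational rank $r$, it is the monomial valuation with rationally independent weights $a_i>0$ along a stratum $Z$ of $\bigcap_i E_i$. So $v_0(f)=\min$ of $\sum a_i\mu_i$ over the monomials $\prod z_i^{\mu_i}$ in the local expansion of $f$ at the generic point of $Z$, and $\cF^\la_{v_0}R$ is cut out by linear conditions in the $\Z^r$-grading refined by orders along the $E_i$ at $Z$. Using this, we will identify $\gr_{v_0}R$ with the $\Z^r$-graded ring obtained by degenerating $\bar X$ to the normal cone of $Z$ (equivalently, the restriction of the Cox-type ring to $Z$). This gives finite generation, and it exhibits $X_{v_0}$ as an affine cone over $Z$, twisted by the $r$ normal bundles $\cO_{E_i}(E_i)|_Z$, carrying a $T=(\C^*)^r$-action with Reeb vector $(a_i)$.

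For klt, apply adjunction: $(K_{\bar X}+E)|_Z=K_Z+\mathrm{Diff}_Z$, and $(Z,\mathrm{Diff}_Z)$ is a klt log Fano pair because $(\bar X,E)$ is dlt. The standard cone construction (orbifold cones over klt log Fano bases with anti-pluricanonical polarization) then gives that $X_{v_0}$ is klt. The signs $A_X(E_i)<0$ are exactly what makes the ample class $-(K_{\bar X}+E)$ correspond to the polarization of the cone.

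\textbf{(i)$\Rightarrow$(ii).} Let $T$ be the torus of rank $r=\ratrk v_0$ acting on $X_{v_0}$, so that $\xi_{v_0}$ lies in its Reeb cone. Finite generation implies that every $w$ in a small rational polyhedral neighborhood of $v_0$ in the corresponding face of quasimonomial valuations satisfies $\gr_wR\cong\gr_{v_0}R$, with Reeb vector $\xi_w$. Choose rational $\xi_1,\dots,\xi_r$ in the Reeb cone spanning a simplicial cone containing $\xi_{v_0}$ in its interior, and let $E_i$ be the divisorial valuations they induce. These are negative, and $A_X(E_i)=-A_{X_{v_0}}(\xi_i)<0$. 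This uses the linearity of $A_{X_{v_0}}$ on the Reeb cone, the fact that $\Omega$ degenerates to a $T$-eigenform, and positivity of $A_{X_{v_0}}$ on the Reeb cone, which holds because $X_{v_0}$ is klt.

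Next define $\bar X$ as the multi-Proj of the $\Z^r$-filtered extended Rees algebra $\bigoplus_{m}\bigcap_i\cF^{m_i}_{E_i}R$, which is finitely generated by the first step. Etale-locally at infinity, $\bar X$ looks like $X_{v_0}\times^T(\text{affine toric chart of the simplicial cone})$. Consequently $\bar X\setminus X=\sum E_i$, the $E_i$ meet along $Z=(X_{v_0}\setminus\{0\})/T$, and $v_0$ is monomial along $Z$ with rank $r$. Since $X_{v_0}$ is klt and the toric part is snc-like, $(\bar X,E)$ is dlt: use inversion of adjunction on strata, reducing to the klt-ness of the quotient pairs. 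Ampleness of $-(K_{\bar X}+E)$ should follow by matching $\cO(1)$ of the Proj with $-(K_{\bar X}+E)$, using the weights $-A_X(E_i)$.

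\textbf{Main obstacle.} The hardest step is the multi-divisor construction in (i)$\Rightarrow$(ii). For $r=1$, the single Proj of $\bigoplus_m\cF^m_wR$ is clean, with boundary divisor $\Proj\gr_wR$. For $r>1$, I expect the main work to be verifying that the multi-Proj is normal and projective, that its boundary is exactly $\sum E_i$ with dlt singularities along all strata, and that $K_{\bar X}+E$ is anti-ample rather than merely anti-nef. I would first reduce all of these to $T$-equivariant statements on the degeneration $X_{v_0}$ via the test configuration, where they become toric-type computations over the klt base.
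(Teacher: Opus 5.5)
\paragraph{Where the proposal has a gap.} The gap is in (i)$\Rightarrow$(ii), and it sits exactly where your ``main obstacle'' paragraph says. None of the properties that make $(\bar X,E)$ a Koll\'ar compactification is actually established, and one claimed ingredient is false.

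(a) Finite generation of $\bigoplus_{m\in\N^r}\bigcap_i\cF^{m_i}_{E_i}R$ does not follow from your first step. Knowing $\gr_wR\cong\gr_{v_0}R$ for each $w$ separately does not control intersections of the filtrations: a priori $w\mapsto w(f)$ is linear near $v_0$ only on an $f$-dependent neighborhood. You would need a basis of $R$ adapted to all $w$ in the cone simultaneously, with $w(g)$ linear in $w$; this could be extracted from the presentation in Lemma~\ref{lem:qmapprox}, e.g.\ via standard monomials of the common initial ideal. You would then still need a lifting argument giving finite generation and flatness of the $\N^r$-graded Rees family with central fiber $\gr_{v_0}R$.

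(b) The claim that near infinity $\bar X$ is \'etale-locally $X_{v_0}\times^T U_\sigma$ is false; $\bar X$ only degenerates to that model. Already for $r=1$: take $E\subset\P^3$ a cubic surface with an $A_1$ point. Then $(\P^3,E)$ is a Tian--Yau type compactification of $X=\P^3\setminus E$ with $A_X(E)=-1/3$, and your construction returns $\bar X=\P^3$. But $\P^3$ is smooth at the $A_1$ point, while the total space of the normal bundle of $E$ is singular along the whole fiber over it. So dlt-ness cannot be read off from $X_{v_0}$ via a local product structure.

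(c) dlt-ness itself is only asserted. So are the facts that every $E_i$ really occurs as a boundary divisor of the Proj (non-redundancy of the $w_i$), and that the $E_i$ meet in a codimension-$r$ stratum at whose generic point $v_0$ is monomial. Moreover, $\xi_1,\dots,\xi_r$ must span a cone that is unimodular for the lattice of $T$; otherwise you can at best get qdlt.

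Anti-ampleness, by contrast, is not the difficulty. Normalize $w_i=\ord_{E_i}$ and take $\Proj$ along the direction $a_i=-A_X(E_i)$. Then $\cO(1)\sim_\Q\sum_i-A_X(E_i)E_i\sim_\Q-(K_{\bar X}+E)$, since $K_{\bar X}+E\sim_\Q\sum_iA_X(E_i)E_i$.

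\paragraph{How the paper proceeds, and what your route would need.} The paper never analyzes such a multi-Proj. It first upgrades (i) to: $v_0$ is a monomial lc place of a special $\Q$-complement on a Tian--Yau type compactification (Lemma~\ref{lem:fgcomp}). This uses Tian--Yau compactifications for nearby divisorial $w$ from Lemma~\ref{lem:divfg}, uniform $N$-complements from \cite{Birbdd}, and the finiteness argument of \cite{LXZ}. It then extracts $F_1,\dots,F_r$ generating the lattice on a toroidal model, and runs an MMP contracting the remaining exceptional divisors (Lemma~\ref{lem:tiebreak}, Theorem~\ref{thm:fanocomp}). The Calabi--Yau condition makes the MMP divisor exceptional over $\bar X$, so the negativity lemma forces the right contractions. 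Finally, the ample model of $-(K+E)$ is shown to be a compactification of $X$.

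Your direct construction has the appeal that $\bar X\setminus E=X$ comes for free. To make it work, though, dlt-ness would have to come from inversion of adjunction on $\Spec\bigoplus_{m\in\N^r}\bigl(\bigcap_i\cF^{m_i}_{E_i}R\bigr)t^m$ along the Cartier divisors $\{t_i=0\}$, whose common intersection is $X_{v_0}$. This would require normality of the intermediate intersections and adjunction for lc centers. The result would then be propagated to the whole family by the torus action and descended through the $\G_m^r$-quotient. That is the rank-$r$ analogue of the argument in Lemma~\ref{lem:divfg}, and it is what is missing.

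\paragraph{The other direction.} Your (ii)$\Rightarrow$(i) is in outline what the paper obtains by citing \cite{XuKstab} for qdlt Fano type models, applied to $(\bar X,\sum_i(1+\varepsilon A_X(E_i))E_i)$, followed by Lemma~\ref{lem:fgres}. Two details need justification for $r>1$. First, identifying $\gr_{v_0}R$ with the restricted multisection ring requires $\cF^\la_{v_0}R=\sum_{\langle a,\beta\rangle\le\la}H^0(\bar X,\sum_i\beta_iE_i)$, which needs an argument. Second, $X_{v_0}$ is not an orbifold cone over $Z$ with a single polarization, so its klt-ness is not the standard cone statement.
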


As an immediate consequence of the above theorem and the 2-step degeneration theory, if $X$ is smooth and has a complete Calabi--Yau metric of Euclidean volume growth, then $X$ admits a Koll\'ar compactification. If $r=1$, that is, if $v_0$ is divisorial, then this gives back the geometric setup considered in~\cite{TY2}, where they showed, under the assumption that $E$ is smooth and K\"ahler--Einstein, that there exists a complete Calabi--Yau metric asymptotic to $X_{v_0}$. In general, this is a higher rank version of what is considered by Tian--Yau, and we refer to~\cite{CLCY} for similar generalizations in the collapsing case.

It is worth noting in contrast to the Tian--Yau setting, there is no assumption on K-stability of the log Fano cone $X_{v_0}$. In fact, the above compactification follows from a more precise characterization of the finite generation property of negative valuations, see Theorem~\ref{thm:kltfg}, analogous to the ones described in the Fano case and the local case, see~\cite{LXZ, XZstab, XuKstab}. 

More precisely, if $v_0=\ord_E$ is divisorial, then the Rees construction gives a Tian--Yau type compactification $(\bar X, E)$ such that $E=\bar X\setminus X$ and $K_{\bar X}+(1-A_X(E))E\sim_\Q0$. In this case, we have the identification $\cF^m_{v_0}R\simeq \Hnot (\bar X, mE)$ for $m$ divisible enough. In general, we generalize the notion of special $\Q$-complements introduced in~\cite{LXZ} and realize $v_0$ as an lc place of a special $\Q$-complement. A Koll\'ar compactification is obtained by running an MMP similar to the one in~\cite{XZstab}, by treating divisors at infinity as exceptional divisors. In particular, we do not run a relative MMP, and the Calabi--Yau condition plays an essential role in this process.

\subsection*{K-semistability of negative valuations}
Our next goal is to study K-stability of these negative valuations. If $v$ is finitely generated, then it is natural to define $v$ to be K-semistable if $(X_v, \xi_v)$ is a K-semistable log Fano cone. As is now well-understood, it is often useful to obtain a valuative criterion to characterize K-stability. 
Using the filtration induced by a negative valuation, we can also define the \emph{volume} of a valuation:
\[
\vol(v)\coloneqq \limsup_{\la\to \infty}\frac{\dim_k\cF^\la_vR}{\la^n/n!}\in (0, \infty].
\]
While this volume can be infinite, it is shown in~\cite{JW26} that it is finite for finitely generated valuations, and more generally for bounded negative valuations. We may also analogously define the \emph{relative volume} if we are given a second norm $\chi$ on $R$ that is linearly bounded. For each $\cF^\la_vR$, we may pick a common orthogonal basis $\{f_i\}$ of $\cF^\la_vR$ with respect to both $v$ and $\chi$, and define 
\[\vol_\la(v; \chi)\coloneqq \frac{1}{\la \dim\cF^\la_vR}\sum_i\chi(f_i).\]
The relative volume is given by
\[
S(v; \chi)=A_X(v)\lim_{\la\to \infty}\frac{\vol_\la(v; \chi)}{\vol_\la(v; v)}.
\]
We can now state the valuative criterion for K-semistability at infinity:
\begin{thmB}
    Let $v_0$ be a finitely generated negative valuation on $X$ with $X_{v_0}$ having klt singularities. Then $(X_{v_0}, \xi_{v_0})$ is K-semistable iff $A_X(v_0)<0$, and
    \[
    A_X(v)\ge S(v_0; v)
    \]
    for all $v\in X^\val_\infty$.
\end{thmB}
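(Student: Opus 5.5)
We reduce Theorem B to the known valuative criterion for K-semistability of log Fano cones (the cone analogue of Fujita--Li, cf.~\cite{LX18, LWX}). That criterion says that $(X_{v_0},\xi_{v_0})$ is K-semistable if and only if, for every valuation $w$ on $X_{v_0}$ centered at the vertex, $A_{X_{v_0}}(w)\ge S_{\xi_{v_0}}(w)$. Here $S_{\xi_{v_0}}$ is the expected vanishing order of $w$ relative to the Reeb filtration $\xi_{v_0}$. Equivalently, $\xi_{v_0}$ minimizes the normalized volume $\widehat{\vol}=A^n\vol$ on the Reeb cone, and its derivative along test configurations is nonnegative.

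The plan is to pass between negative valuations $v\in X^\val_\infty$ on $X$ and valuations on $X_{v_0}$ via the degeneration $R\rightsquigarrow \gr_{v_0}R$.

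\textbf{Step 1 (sign condition).} Since $X_{v_0}$ is klt, the remark before Theorem A gives $A_X(v_0)<0$. This condition also fixes the normalization. Under the degeneration, $v_0$ becomes the Reeb valuation $\mathrm{wt}_{\xi_{v_0}}$, and the identity $A_{X_{v_0}}(\xi_{v_0})=-A_X(v_0)$ should follow from the Koll\'ar compactification of Theorem A. On $\QM(\bar X,E)$, the function $A_X$ is linear and is computed by the dlt pair $(\bar X,E)$, which matches the cone log discrepancy on the simplicial cone of Reeb vectors.

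\textbf{Step 2 (transfer of $S$ and $A$).} Given $v\in X^\val_\infty$, take a common orthogonal basis adapted to $\cF_{v_0}$ and $\cF_v$. The filtration $\cF_v$ induces a filtration on $\gr_{v_0}R$, hence a $T$-invariant filtration on $X_{v_0}$. Its limit $\vol_\la(v_0;v)/\vol_\la(v_0;v_0)$ is exactly the $S$-invariant computed on the cone, so $S(v_0;v)$ equals $S_{\xi_{v_0}}(\mathrm{in}(v))$, up to the sign and normalization from Step 1. Conversely, any $T$-invariant quasimonomial valuation $w$ on $X_{v_0}$ can be written as $\xi_{v_0}+tw'$ with $w'$ a centered valuation. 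For such $w$, I would produce a $v\in X^\val_\infty$ with $\mathrm{in}(v)=w$, using the Rees/dlt model and the fact that $v_0$ is an lc place of a special $\Q$-complement (Theorem~\ref{thm:kltfg}). It suffices to test $T$-invariant divisorial $w$, and these come from special test configurations. For log discrepancies I need $A_X(v)\ge A_{X_{v_0}}(\mathrm{in}(v))$, by lower semicontinuity of $A$ under degeneration, with equality for the lifts constructed in the converse direction. This gives ``$\Leftarrow$'': the lifts satisfy the hypothesis, so $A_{X_{v_0}}(w)\ge S(w)$ for all test $w$, and the cone criterion applies.

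\textbf{Step 3 (the hard direction, $\Rightarrow$).} For arbitrary $v\in X^\val_\infty$, which may be non-quasimonomial or have $A_X(v)=\infty$, consider the segment $v_t$ of filtrations interpolating between $\cF_{v_0}$ and $\cF_v$. Take the derivative at $t=0$ of $\log\vol(v_t)+n\log(-A_X(v_t))$. Convexity of $A_X$ along this geodesic, together with the derivative formula for volume (which yields the integral $S$), shows that the derivative is $\ge0$ exactly when $A_X(v)\ge S(v_0;v)$. K-semistability gives minimality of $\widehat{\vol}$ at $v_0$ among such filtrations, by the cone-case result that K-semistable Reeb fields minimize the normalized volume among all filtrations via degeneration. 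This minimality yields the inequality.

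I expect the main obstacle to be Step 3. Specifically, two things must be justified: that the volume is differentiable along $v_t$ with derivative $S(v_0;v)$, which uses the finiteness of volume for bounded negative valuations from \cite{JW26} and a uniform approximation, and that minimality of $\widehat{\vol}$ on the cone transfers to filtrations on $X$ through the degeneration. I would try first to approximate $v$ by quasimonomial valuations in $\QM$ of log resolutions of $(\bar X,E)$, so as to reduce to the finitely generated case.
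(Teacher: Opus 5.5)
\textbf{Step 3 fails.} You claim that K-semistability of $(X_{v_0},\xi_{v_0})$ makes $v_0$ a minimizer of $(-A_X)^n\vol$ among valuations and filtrations at infinity, by transporting the cone-case minimization through the degeneration. This is false at infinity. If it held, all K-semistable negative valuations on $X$ would share one normalized volume. Take $X=\C^3$. The degree valuation $v_0$ has $X_{v_0}=\C^3$. Sz\'ekelyhidi's valuation $v$, with $v(z_i)=-2$ and $v(z_1^2+z_2^2+z_3^2)=-1$, has $X_v\cong\C\times A_1$. Both cones carry Calabi--Yau cone metrics. By Lemma~\ref{lem:ldeq} and $\vol(v)=\vol(\xi_v)$, the normalized volumes at infinity equal those of the cones: $27$ and $27/2$. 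So minimality already fails at the endpoint $t=1$ of your segment, and no convexity along it can hold. The cone argument you are imitating relies on the torus action: for $T$-invariant $w$, $\mathrm{wt}_\xi+tw$ is again a valuation and $A$ is affine in $t$. On $X$ there is no torus and the interpolating norms are not valuations, so ``convexity of $A_X$ along the geodesic'' has no meaning.

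What the direction ``$\Rightarrow$'' (the inequality $\min\{1,\delta(X,v_0)\}\ge\delta(X_{v_0},\xi_{v_0})$) actually needs is a pointwise comparison between log discrepancies at infinity and data on the cone. The paper obtains it first in the divisorial case, on the Tian--Yau compactification $(\bar X,E)$. There one has $A_X(v)+rv(E)=A_{\bar X,E}(v)$, $S(E;v)=\vol(v_0;v)+v(E)$ and $T(v_0;v)=v(E)$. The Abban--Zhuang computation of $\delta(E,\Delta_E)$ through the bigraded series $W_{m,j}$ and inversion of adjunction (Proposition~\ref{prop:stabadj}) then gives $\delta(E,\Delta_E)=\delta(X,v_0)$. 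In higher rank, $v_0$ is approximated by divisorial $v_i$ with $X_{v_i}\cong X_{v_0}$, and one passes to the limit using continuity of $T$, of relative volumes, and of $\delta$ in the Reeb field.

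\textbf{Step 2 is a different route, but its core is only asserted.} The paper argues ``$\Leftarrow$'' as follows: Lemma~\ref{lem:ksslct} with $D=0$ shows that a K-semistable $v_0$ computes $\lct(\chi_{v_0})$. Basis type divisors on the cone are then lifted to the $\bar X_i$, and one concludes by ACC and adjunction. Your lifting approach, in the spirit of Li--Xu's proof that the associated graded of a minimizer is K-semistable, could work. To carry it out you need the following for each special test configuration of the cone with associated $T$-invariant divisorial $w$:
\begin{itemize}
\item a lift $v_\varepsilon$ that is a finitely generated negative valuation with $\gr_{v_\varepsilon}R\cong\gr_w\gr_{v_0}R$ klt;
\item then Lemma~\ref{lem:ldeq} makes $A_X(v_\varepsilon)$ affine in $\varepsilon$ with slope $\pm A_{X_{v_0}}(w)$;
\item then the hypothesis at $v_\varepsilon$, combined with Lemma~\ref{lem:STcomp}, yields $A_{X_{v_0}}(w)\ge S(\xi_{v_0};w)$.
\end{itemize}
None of this is constructed in your proposal. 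Also note that the shift by $-A_X(v_0)T(v_0;\cdot)$ in Lemma~\ref{lem:STcomp} is essential, not a normalization. Finally, $\mathrm{in}(v)$ is in general only a filtration, not a valuation, so ``$A_X(v)\ge A_{X_{v_0}}(\mathrm{in}(v))$'' must be reformulated with thresholds and that shift.
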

While this looks exactly like the valuative criterion for Fano varieties and log Fano cones, we mention 
that we have to test all valuations at infinity, whereas in the local case, it suffices to test all valuations centered at the singularity. The proof of the divisorial case is obtained by an inversion of adjunction argument using the techniques developed in~\cite{AZ22}; in general, the valuative criterion is obtained by a careful approximation by divisorial valuations.

Finally, while we cannot expect there to be a unique K-semistable valuation at infinity in general, we may still ask if all valuations satisfying the valuative criterion above are finitely generated, and thus give degenerations of $X$ to K-semistable log Fano cones. Somewhat surprisingly, this is not even clear when the valuation is divisorial. 

However, if there is a compactification close enough to the K-semistable valuation $v_0$, then we can perturb the compactification to the K-semistable one. For this, we need to measure how close two valuations are quantitatively. This follows the general philosophy of metric geometry on the space of norms studied in~\cite{JW26}, with its global analog studied in detail in~\cite{nakstab1}. In our setting, the following quantity is close to measuring the $\de_\infty$ distance between two negative bounded norms $\chi, \chi'$:
\[
T(\chi; \chi')\coloneqq \sup_{f\in R\setminus k}\frac{\chi'(f)}{\chi(f)}.
\]
With this, we can state:
\begin{thmC}
    Let $v_0$ be a K-semistable negative quasimonomial valuation, that is, it satisfies the valuative criterion, and suppose there is a log Fano type compactification $(\bar X, F)$ such that
    \[
    T(v_0; F)T(F; v_0)\le 1-A_X(F)^{-1}.
    \]
    Then $v_0$ is finitely generated, and $X_{v_0}$ is K-semistable.
\end{thmC}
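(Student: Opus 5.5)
We would follow the strategy of \cite{XZstab} and \cite{LXZ}: realize $v_0$ as an lc place of a special $\Q$-complement, then apply Theorem~\ref{thm:kltfg} to get finite generation together with a klt degeneration. After that, Theorem B upgrades the valuative criterion to K-semistability of $(X_{v_0},\xi_{v_0})$. The quantitative hypothesis $T(v_0;F)T(F;v_0)\le 1-A_X(F)^{-1}$ is what should let the complement be built on the given log Fano type compactification $(\bar X,F)$. Here $F$ is regarded as a negative divisorial valuation with $A_X(F)<0$, and $-(K_{\bar X}+F)$ is big and semiample-type.

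\textbf{Step 1.} Normalize so that $\cF^\la_F R\simeq \Hnot(\bar X,\la F)$ for $\la$ divisible. The constants $T(v_0;F)$ and $T(F;v_0)$ then give sandwich inclusions
\[
\cF^{\la/T(F;v_0)}_{v_0}R\subseteq \cF^{\la}_F R\subseteq \cF^{\la T(v_0;F)}_{v_0}R.
\]
So the filtration induced by $v_0$ on the section ring of $(\bar X,F)$ is linearly comparable to the $F$-filtration.

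\textbf{Step 2.} Construct basis-type (or $m$-basis type) divisors $D_m$ on $\bar X$ compatible with $\cF_{v_0}$. Use the valuative criterion $A_X(v)\ge S(v_0;v)$ for all $v\in X^\val_\infty$, in the form of a $\delta$-type invariant $\ge 1$ at infinity. This should give that $(\bar X, F+ cD_m)$ is lc for suitable $c$, while $v_0$ computes the threshold, so that $v_0$ is an lc place.

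\textbf{Step 3.} Combine Step 2 with the Calabi--Yau relation $K_X\sim 0$, which lets us write $K_{\bar X}+(1-A_X(F))F\sim_\Q$ something effective supported at infinity. This produces an $\R$-complement $\Gamma$ with $K_{\bar X}+\Gamma\sim_\R 0$ and $v_0\in\lc(\bar X,\Gamma)$. The inequality $T(v_0;F)T(F;v_0)\le 1-A_X(F)^{-1}$ is exactly what should guarantee that the coefficient of $F$ in $\Gamma$ stays $\le 1$. It should also guarantee that the remaining part is big on $\bar X$, so that $\Gamma$ is \emph{special}: it has the form $G+\Delta$ with $\Delta$ containing an ample $\Q$-divisor away from $F$. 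Concretely, we would compute $v_0(F)$-scaled coefficients and check that the product of the two $T$'s controls how far $\ord_F$ is from $v_0$ relative to $A_X(F)$.

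\textbf{Step 4.} Using boundedness of complements and rational perturbation (ACC-type arguments as in \cite{LXZ}), pass from $\R$ to a special $\Q$-complement. Then Theorem~\ref{thm:kltfg} gives that $v_0$ is finitely generated with $X_{v_0}$ klt. Since $v_0$ satisfies the criterion, Theorem B yields that $X_{v_0}$ is K-semistable.

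The main obstacle is Step 3: making sure the complement is special. This amounts to showing that the lc centre does not swallow all of the bigness near $F$, and it is where the precise constant $1-A_X(F)^{-1}$ must come out of the comparison. We would first attempt a direct computation of $S(v_0;\ord_F)$ against $A_X(F)$ using Step 1, aiming to produce a strict gap.
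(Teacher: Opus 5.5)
Your outline has the right overall shape: make $v_0$ an lc place of a special complement on $\bar X$, deduce finite generation, then apply Theorem~B. But Steps~2--3, which carry all the content, are only expectations, and the mechanism that makes them work is missing. That mechanism is the log canonical threshold at infinity of $\chi_{v_0}$, not a basis-type construction on $\bar X$.

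\emph{Auxiliary boundary.} Take a log smooth model $\pi\colon(Y,E)\to\bar X$ with $v_0\in\QM_\eta(Y,E)$ and a $\pi$-exceptional ample $H$. Choose a general $G\in|\pi^*F+\varepsilon H|_\Q$ containing no stratum, and write $\pi_*G=\frac1m\{s=0\}$ with $s\in\Hnot(\bar X,mF)$. Set $D=t\,\div(s/s_F^m)$ with $0<t\ll1$, so that $v_0(D)>A_X(v_0)/n$.

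\emph{Threshold inequality.} Lemma~\ref{lem:ksslct} (the only place basis type divisors enter) converts K-semistability into the statement that $v_0$ computes $\lct(X,D;\chi_{v_0})$. With $r_0=-A_{X,D}(v_0)$, this says $A_{X,D}(v)\ge -r_0T(v_0;v)$ for all $v\in X^\val_\infty$, with equality at $v_0$.

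\emph{Rewriting on $\bar X$.} Put $c=(1-A_{X,D}(F))/r_0$. If $c\ge T(v_0;F)$, then $\cF^m_{v_0}R\subseteq\Hnot(\bar X,\lfloor cm\rfloor F)$. Now use \eqref{equ:FSbaseideal} together with $A_{X,D}(v)=A_{\bar X,\bar D}(v)-(1-A_{X,D}(F))v(F)$; the latter comes from the identity $K_{\bar X}+(1-A_X(F))F=0$, not merely from an effective class. The inequality becomes $A_{\bar X,\bar D}(v)\ge r_0\,v(\fb_\bullet)$. So $(\bar X,\bar D;r_0\cdot\fb_\bullet)$ is lc with $v_0$ an lc place.

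\emph{Complement and specialness.} \cite[Lemmas~3.1, 3.2]{LXZ} then produce a $\Q$-complement $\Gamma\ge\bar D$ with $v_0$ as an lc place. It is special because $\pi^{-1}_*\Gamma$ dominates a positive multiple of $G$. So speciality comes from the auxiliary $D$, not from the quantitative hypothesis as you suggest in Step~3.

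The hypothesis is used only to guarantee $c\ge T(v_0;F)$. Combine it with $-A_X(v_0)\le -A_X(F)T(F;v_0)$, which follows from $\lct(\chi_F)=-A_X(F)$ in Remark~\ref{rmk:tylct}, and with $\ord_F(D)\ge T(v_0;F)v_0(D)$. This gives $r_0T(v_0;F)\le 1-A_{X,D}(F)$. Equivalently, the coefficient of $F$ in the resulting boundary, $1-A_{X,D}(F)-r_0T(v_0;F)$, is nonnegative, so one gets a pair rather than a sub-pair. The bound $\le 1$ that you single out is automatic from the threshold inequality at $v=\ord_F$.

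This coefficient is in general strictly less than $1$, so the output is a complement of $\bar X$, not of $(\bar X,F)$. Your pair $(\bar X,F+cD_m)$ and the appeal to Theorem~\ref{thm:kltfg}(2) therefore do not match what the argument yields. The paper instead concludes with \cite[Theorem~4.2]{LXZ} on the Fano type variety $\bar X$ and then Lemma~\ref{lem:fgres}, using a BCHM extraction in the divisorial case.

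Finally, your Step~1 sandwich has the two $T$'s swapped. It should read $\cF^{\la/T(v_0;F)}_{v_0}R\subseteq\cF^\la_FR\subseteq\cF^{\la T(F;v_0)}_{v_0}R$.
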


This follows the general strategy adopted in local K-stability theory, that is, a K-semistable valuation will compute the log canonical threshold of a filtration defined by itself. In our case, a K-semistable valuation will compute $\lct(\chi_{v_0})\=\sup_{v\in X^\val_\infty}\frac{-A_X(v)}{T(v_0; v)}$, which serves the role of certain log canonical threshold at infinity. One may then realize $v_0$ as an lc place of a (sub)pair on $\bar X$, and the quantitative condition we give here ensures that we get a pair.

This is reminiscent of Tian--Yau type constructions, where the Calabi ansatz produces an almost Calabi--Yau reference metric, and one can then perturb it to be a genuine Calabi--Yau metric. In fact, such examples arise frequently when the metric tangent cone at infinity is irregular; in particular, the example studied in~\cite{CH15} satisfies the above quantitative condition.

\subsection*{Organization}The paper is organized as follows. We start with some preliminaries and recalling the theory of negative norms developed in~\cite{JW26} in~\S\ref{sec:prelim}, where we also define the log discrepancy and log canonical thresholds at infinity. In~\S\ref{sec:comp} we prove Theorem~A. In~\S\ref{sec:valinv} we define the relevant valuative invariants of valuations. Finally we define the valuative criterion and prove Theorem~B and Theorem~C in~\S\ref{sec:Kss}.

\subsection*{Acknowledgment}
This project started off as a joint project with Mattias Jonsson, and I thank him for many fruitful discussions and collaboration on~\cite{JW26}. I am grateful to Song Sun, from whom I first learned about the problem; and Ziquan Zhuang for explaining and answering many questions regarding~\cite{XZstab}. I also thank Fanjun Meng, Lingyao Xie, Chenyang Xu and Junsheng Zhang for helpful discussions. This work was partially supported by NSF grant DMS-1926686 and the Simons Collaboration Grant \emph{Moduli of varieties}. 

%
%
%
%
\section{Preliminaries}\label{sec:prelim}

We begin by recalling some results from~\cite{JW26} on bounded negative norms on an affine variety and their volumes. Besides setting up notation and terminologies, the main purpose of this section is to record a continuity result in local K-stability, and define log discrepancy and log canonical thresholds at infinity.
%
%
\subsection{Norms}\label{sec:norm}
Let $X=\Spec R$ be an affine variety. 
Using additive conventions, we define a (non-Archimedean) \emph{seminorm} on $R$ (or $X$) to be a function $\chi\colon R\to\R\cup \{\infty\}$ satisfying, for any $f, g\in \R$,
\begin{itemize}
    \item $\chi(f+g)\ge\min\{\chi(f),\chi(g)\}$;
    \item $\chi(fg)\ge\chi(f)+\chi(g)$;
    \item  $\chi(0)=\infty$ and $\chi|_{k^\times}\equiv0$.
\end{itemize}
We say $\chi$ is a \emph{norm} if in addition $\chi(f)<\infty$ for $f\ne 0$. 
Such norms are in one-to-one correspondence with (increasing, right-continuous, multiplicative and exhaustive) \emph{filtrations} on $R$ by vector spaces, via:
\[
\cF^\la_\chi R\coloneqq \{f\in R\mid \chi(f)\ge -\la\}, \quad \chi(f)\coloneqq -\inf\{\la\ge 0\mid f\in \cF^\la R\}.
\]

A seminorm is \emph{homogeneous} if $\chi(f^n)=n\chi(f)$ for all $f\in R$ and $n\ge1$. A (semi)norm is called a \emph{(semi)valuation} if $\chi(fg)=\chi(f)+\chi(g)$ for all $f,g\in R$. A norm $\chi$ is \emph{negative} if $\chi(f)<0$ for all $f\in R\setminus k$.

If $\chi,\chi'$ are seminorms, so is $\chi\wedge\chi'$ defined by 
\[(\chi\wedge\chi')(f)=\min\{\chi(f),\chi'(f)\}, f\in R.\]
Similarly $t\chi$ is a seminorm for any $t\in\R_+^\times$. These operations preserve the set of homogeneous seminorms, norms and negative norms, respectively.

\subsection{Geometric and bounded norms}
A large class of negative norms on $X$ is given by compactifications of $X$, which we now describe. Let $\bar X$ be a \emph{compactification} of $X$, that is, a projective variety having $X$ as an open affine subset, together with a \emph{strictly effective} Cartier divisor $D$ on $\bar X$, that is, an effective Cartier divisor $D$ with support equal to $\partial \bar X\=(\bar X\setminus X)_{\redu}$.
If $f\in R$ and $m\ge0$, then $fs_D^m$ is a rational section of $\cO_{\bar{X}}(mD)$, and a regular section for $m\gg1$. Here $s_D$ is the canonical section of $\cO_{\bar{X}}(D)$. We can therefore define a negative norm $\chi_{\bar{X},D}$ on $R$ by
\[
\chi_{\bar{X},D}(f):=-\min\{m\in\Z_{\ge0}\mid fs_D^m\in\Hnot(\bar{X},mD)\}.
\]
We call such a norm a \emph{geometric norm} if $D$ is further semiample. By~\cite[Lemma~6.7]{JW26}, if $\chi_0$ is a geometric norm and $\chi$ is any seminorm, then $\chi\ge C\chi_0$ for some $C>0$.

Let $\chi_0$ be a geometric norm on $R$. We say that a norm $\chi$ on $R$ is \emph{bounded} if there exists $C>0$ such that $C\chi_0\le\chi\le-C\chi_0$. We also say that $\chi$ is a \emph{bounded negative norm} if there exists $C\ge1$ such that $C\chi_0\le\chi\le C^{-1}\chi_0$. These notions do not depend on the choice of $\chi_0$, and that any geometric norm is a bounded negative norm.

\begin{defi}\label{defi:T}
    Given a bounded negative norm $\chi_0$ and any seminorm $\chi$ on $R$, we define
    \[
    T(\chi_0; \chi)=\sup_{f\in R\setminus k}\frac{\chi(f)}{\chi_0(f)}=\inf\{C\ge0\mid C\chi_0\leq \chi\}\in[0,\infty).
    \]
\end{defi}
It is not hard to see that $T(\chi_0;\chi)>0$ as soon as $\chi$ is \emph{centered at infinity} in the sense that $\chi(f)<0$ for some $f$.
If $\chi,\chi'$ are bounded negative norms, we set 
\[
  \de_\infty(\chi, \chi')\coloneqq \log \inf\{C\geq 1\mid C\chi\leq \chi'\leq C^{-1}\chi\}=\log\max\{T(\chi;\chi'),T(\chi';\chi)\}.
\]
One easily checks that $\de_\infty$ defines a complete metric on the space of bounded negative norms.
If $\chi$ and $\chi'$ are bounded negative norms, then so is $\chi\wedge\chi'$, as is $t\chi$ for any $t\in\R_+^\times$.
Extending the well-known result~\cite[Proposition~1.14]{BE21} or~\cite[Lemma~3.1]{AZ22} to the infinite dimensional setting, if $\chi_0$ is a bounded negative norm and $\chi$ is a bounded norm on $R$, then there exists a $k$-vector space basis $(f_i)_{i=0}^\infty$ of $R$, with $f_0=1$, that is orthogonal for both $\chi_0$ and $\chi$, see~\cite[Lemma~6.18]{JW26}.

If $h\colon R\to R'$ is a surjection of finitely generated $k$-algebras, then any seminorm $\chi$ on $R$ induces a \emph{residue seminorm} $\chi'$ on $R'$ via $\chi'(f'):=\sup_{f\in h^{-1}(f')}\chi(f)$. If $\chi$ is a norm (resp.\ bounded negative norm), so is $\chi'$.

For any bounded negative norm $\chi$, the \emph{volume} of $\chi$ is defined to be
\[
\vol(\chi)\coloneqq \limsup_{\la \to \infty}\frac{\dim\cF^\la_{\chi}R}{\la^n/n!}\in[0,\infty),
\]
where the limsup is in fact a limit.

%
%
\subsection{Finitely generated negative norms}
For a negative norm $\chi$, we denote by $\Phi_\chi\subset\R_+$ the subsemigroup of nonnegative values of the group generated by $-\chi(R\setminus 0)$. 
To any bounded negative norm $\chi$ we associate a \emph{Rees algebra}; this is the $\Phi_\chi$-graded $k[t^{\Phi_\chi}]$-algebra given by 
\[
\cR_\chi:=\bigoplus_{\la\in\Phi_\chi}t^\la\cF_\chi^\la R.
\]
The \emph{associated graded} of $\chi$ is the $\Phi_\chi$-graded $k$-algebra defined by 
\[
\gr_\chi R\coloneqq \bigoplus_{\la\in\Phi_\chi} \frac{\cF^\la_\chi R}{\cF^{<\la}_\chi R}.
\]
Note that $\chi$ is homogeneous (resp.\ a valuation) iff $\gr_\chi R$ is reduced (resp.\ an integral domain).
\begin{exam}
    Let $X=\A^n=\Spec k[z_1, \dots, z_n]$, and $v_\alpha$ a negative monomial valuation on $X$ with weight $\alpha=(\alpha_1, \dots, \alpha_n)\in \R^n_{<0}$. Concretely, 
    \[
    v_\alpha(f=\sum_{\beta\in\Z^n_+} a_\beta z^\beta)\= \min\{\langle \alpha, \beta\rangle\mid a_\beta\ne 0\}, 
    \]
    for any $f\in k[z_1, \dots, z_n]$. Then $\gr_{v_\alpha}k[z_1, \dots, z_n]=k[z_1, \dots, z_n].$ 
\end{exam}
Thanks to~\cite[Proposition~6.23]{JW26}, we define the class of finitely generated negative norms as follows.
\begin{defi}\label{defi:ft}
    A negative norm $\chi$ is \emph{finitely generated} if the following equivalent conditions are satisfied:
    \begin{itemize}
        \item[(i)]
        there exists a surjection $k[z_1,\dots,z_m]\to R$, such that $\chi$ is the residue norm induced by a negative monomial valuation on $k[z_1,\dots,z_m]$;
         \item[(ii)]
         $\gr_\chi R$ is a finitely generated $k$-algebra;
    \item[(iii)]
        $R_\chi$ is finitely generated $k[t^{\Phi_\chi}]$-algebra.
    \end{itemize}
\end{defi}
Any finitely generated norm is bounded. 
The Rees correspondence provides a one-to-one correspondence between the set of equivalence classes of $\Z$-valued finitely generated negative norms and the set of polarized compactifications $(\bar X, D)$, where $\bar X=\Proj\cR_\chi$ and $D$ is a strictly effective and ample $\Q$-Cartier $\Q$-divisor.


%
%
\subsection{Berkovich spaces and valuations}
The \emph{Berkovich analytification} $X^\na$ of $X$ is defined as the space of semivaluations on $X$.
We equip $X^\na$ with the topology of pointwise convergence on $R$; then $X^\na$ is locally compact. 
A semivaluation $v\in X^\na$ is \emph{centered on $X$} if $v\ge0$ on $R$: in this case the \emph{center} $c_X(v)$ is the point on $X$ defined by the prime ideal $\{v>0\}\subset R$. If $v$ is not centered on $X$, we say that $v$ is \emph{centered at infinity}. We write $X^\beth$ (resp.\ $X^\na_\infty$) for the closed (resp.\ open) set of semivaluations centered on $X$ (resp.\ at infinity). 

If $\bar X$ is a compactification of $X$, then by the valuative criterion of properness, any valuation $v\in X^\val$ admits a center $c_{\bar X}(v)\in\bar{X}$, given by the unique point such that $v$ is nonnegative on the local ring $\cO_{\bar X,c_{\bar X}(v)}$ and strictly positive on its maximal ideal. The center is defined also for $v\in X^\na$. We have $c_{\bar X}(v)\in X$ iff $v\in X^\beth$, and in this case $c_{\bar X}(v)=c_X(v)$ as defined above.
We have a natural multiplicative action of $\R_{>0}$ on $X^\na$, and that $X^\beth$ and $X^\na_\infty$ are invariant under this action. Moreover, $X^\na_\infty/\R_+^\times$ is compact; see~\cite[Proposition~2.1]{JW26}. 

A valuation $v\in X^\val$ is \emph{quasimonomial(qm)} if $\ratrk(v)+\trdeg(\kappa(v)/k)=n$, where $\ratrk(v)=\dim_\Q (v(k(X))^\times\otimes_\Z\Q)$ and $\kappa(v)=\{v\ge0\}/\{v>0\}$ is the residue field of $v$. We say $v$ is \emph{divisorial} if in addition $\ratrk(v)=1$.
We write $X^\div$ (resp. $\Xqm$) for the set of divisorial (resp. qm) valuations, and $X^\div_\infty$ (resp. $\Xqm_\infty$) for the subset of divisorial valuations centered at infinity. These sets are invariant under the scaling action by $\Q_{>0}$.

Valuations in $\Xqm_\infty$ can be described geometrically. Let $(Y, E=\sum_{i=1}^rE_i)$ be a log smooth projective model of $X$, by which we mean a log smooth pair together with a projective birational morphism $\pi: Y\to \bar X$ to some compactification of $X$. Suppose $Z\subseteq \bigcap_{i=1}^r E_i$ is an irreducible component and $\alpha\in \R^r_+$.
Then if we let $\eta$ be the generic point of $Z$, and pick local algebraic coordinates around $\eta$ so that $E_i=(y_i=0)$, we may write any $f\in \cO_{Y, \eta}$, as $f=\sum_\beta c_\beta y^\beta$ with $c_\beta\in \widehat{\cO}_{Y, \eta}$ either zero or a unit, and set
\[
v_\alpha(f)=\min\{\langle \alpha, \beta\rangle \mid c_\beta\ne 0\}.
\]
It follows that $v_\alpha$ is a qm valuation, and $v_\alpha\in \Xqm_\infty$ if $\pi(c_Y(v))\subset\partial\bar X$. In fact, let us remark that the above construction works whenever $(Y,E)$ is an snc pair, and we denote by $\QM(Y,E)$ the set of qm valuations described by the model $(Y,E)$.
By~\cite[Proposition~3.7]{JM12}, given any $v\in\Xqm_\infty$, there exists a log smooth model $(Y, E)$ with $v\in \QM(Y,E)$.
In the special case when $v\in X^\div_\infty$, there exists an integrally closed compactification $\bar{X}$ of $X$, an irreducible component $E\subset\partial\bar X$, and $c\in\R_{>0}$ such that $v=c\ord_E$ by~\cite[Lemma~2.45]{KollarMori}. 

Following the terminologies in \S\ref{sec:norm}, we say $v\in \Xval$ is \emph{negative} if $v(f)<0$ for all $f\in R\setminus k$. A negative valuation is centered at infinity, but not every valuation centered at infinity is negative. In fact, we have the following simple observation:
\begin{lem}
    If $X$ admits a negative valuation, then there are no non-constant global invertible functions on $X$, i.e., units of $R$ are $k^\times$.
\end{lem}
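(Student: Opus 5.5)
The plan is a direct argument with the multiplicativity of $v$. Let $v$ be a negative valuation on $X$ and let $u\in R^\times$ be a unit, with inverse $u^{-1}\in R$. We want to show that $u\in k^\times$. Since $v$ is a valuation with $v|_{k^\times}\equiv 0$, we have
\[
v(u)+v(u^{-1})=v(u\cdot u^{-1})=v(1)=0 .
\]

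Suppose, for contradiction, that $u\notin k$. Then $u^{-1}\notin k$ as well, since $k$ is a field and $u^{-1}\in k$ would force $u\in k$. By the definition of a negative valuation, $v(u)<0$ and $v(u^{-1})<0$. Their sum is then strictly negative, which contradicts the displayed identity. Hence every unit of $R$ lies in $k$, and units of $R$ are $k^\times$.

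There is no real obstacle. The only points to check are that both $u$ and $u^{-1}$ are genuinely elements of $R$, which holds because $u$ is a unit of $R$, and that $v(1)=0$, which is part of the normalization $\chi|_{k^\times}\equiv 0$ for (semi)norms. Values in $\R\cup\{\infty\}$ cause no trouble here: neither value can be $\infty$, because their sum is $0$ and $v(u)=\infty$ would force $v(u^{-1})=-\infty$, which is not allowed. In fact both values are finite and negative, by the definition of negativity.
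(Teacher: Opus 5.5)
Your proof is correct, and it is the argument the paper has in mind. The paper gives no written proof: it calls the statement a "simple observation" and says in the introduction that it is "clear from the definition." The identity $v(u)+v(u^{-1})=v(1)=0$, combined with negativity of both $v(u)$ and $v(u^{-1})$ for a unit $u\notin k$, is exactly that observation.
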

Not all negative quaismonomial valuations are bounded. However, we recall the following corollary.
\begin{cor}[{\cite[Corollary~6.27]{JW26}}]
    Any finitely generated negative valuation is quasimonomial.
\end{cor}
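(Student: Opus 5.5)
The plan is to verify Abhyankar's equality $\ratrk(v)+\trdeg(\kappa(v)/k)=n$ directly. The reverse inequality $\le n$ is Abhyankar's inequality and holds for every valuation, so it suffices to prove $\ratrk(v)+\trdeg(\kappa(v)/k)\ge n$. Let $v$ be a finitely generated negative valuation. By Definition~\ref{defi:ft}, $\gr_vR$ is a finitely generated $k$-algebra. It is an integral domain because $v$ is a valuation. It is graded by the semigroup $\Phi_v\subset\R_+$.

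\emph{Step 1: the value group has rank at most the number of grading degrees.} Choose finitely many homogeneous generators of $\gr_vR$, with degrees $\la_1,\dots,\la_m$. Every $f\in R\setminus 0$ has initial form of degree $-v(f)$, which lies in $\sum\Z_{\ge0}\la_j$. Since $v(k(X)^\times)$ is generated by $v(R\setminus 0)$, the value group $\Gamma$ is finitely generated. Set $r\=\ratrk(v)=\operatorname{rk}\Gamma$.

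\emph{Step 2: $\dim\gr_vR=n$.} As a finitely generated norm, $v$ is bounded (noted after Definition~\ref{defi:ft}). Hence $C\chi_0\le v\le C^{-1}\chi_0$ for a geometric norm $\chi_0$. This gives
\[
\dim\cF^{\la/C}_{\chi_0}R\le\dim\cF^\la_vR\le\dim\cF^{C\la}_{\chi_0}R .
\]
Since $\chi_0=\chi_{\bar X,D}$ with $D$ semiample and strictly effective, $\dim\cF^\la_{\chi_0}R=h^0(\bar X,\lfloor\la\rfloor D)$. This grows like $\la^n$ up to constants: $D$ is big because it is supported on all of $\partial\bar X$ with $X$ affine, and its volume is finite. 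So $\dim\cF^\la_vR\asymp\la^n$. On the other hand, $\dim\cF^\la_vR=\sum_{\mu\le\la}\dim(\gr_vR)_\mu$, so the graded pieces of $\gr_vR$ have the same cumulative growth. For a finitely generated domain graded by $\Phi_v$, the growth order of this cumulative Hilbert function equals its Krull dimension. To see this, Noether-normalize by homogeneous elements of positive degree and compare with a polynomial ring with positive real weights, whose count of monomials of weight $\le\la$ grows like $\la^{d}$. Thus $\dim\gr_vR=n$.

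\emph{Step 3: compare with the residue field.} Let $K_0$ be the field of degree-zero fractions $a/b$, where $a,b\in\gr_vR$ are homogeneous of the same degree. Lift such $a,b$ to $f,g\in R$ with $v(f)=v(g)$. Then $f/g$ has value $0$, and $a/b\mapsto$ the class of $f/g$ in $\kappa(v)$ is a well-defined injective field homomorphism $K_0\hookrightarrow\kappa(v)$; it is well defined and injective because initial forms are multiplicative. The homogeneous fraction field of $\gr_vR$ is generated over $K_0$ by elements whose degrees span a group of rank $r$. Hence
\[
n=\trdeg\operatorname{Frac}(\gr_vR)\le\trdeg K_0+r\le\trdeg\kappa(v)+\ratrk(v).
\]
Combined with Abhyankar's inequality, this gives equality, so $v$ is quasimonomial.

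The main obstacle is Step 2 when $\Phi_v$ is not discrete. There one cannot directly invoke flatness of a Rees family over $\A^1$, and the Hilbert-growth argument has to be made carefully for real gradings. If that argument proves delicate, I would instead use Definition~\ref{defi:ft}(i): present $v$ as the residue norm of a negative monomial valuation $v_\alpha$ on $k[z_1,\dots,z_m]$. Then perturb $\alpha$ to rational weights $\alpha'$ having the same initial ideal, which is possible since initial ideals of finitely generated ideals are locally constant in $\alpha$ on a Gröbner fan cone. This identifies $\gr_vR$ with the special fibre of a genuine $\A^1$-flat degeneration of $X$, which has dimension $n$.
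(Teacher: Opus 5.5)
Your outline is correct: Step~1 and Step~3 hold as written, and Step~3 only needs $\dim\gr_vR\ge n$ from Step~2. In fact $K_0\to\kappa(v)$ is an isomorphism, since every value-zero element of $k(X)$ is $f/g$ with $f,g\in R$ and $v(f)=v(g)$. The argument you give for Step~2, however, \emph{fails as stated} once $\ratrk(v)\ge 2$. The degree map embeds the grading group as a lattice $\Z^r\subset\R$, so ``homogeneous'' means $\Z^r$-multihomogeneous, and multigraded domains need not have a homogeneous system of parameters. Take $A=k[u,v,uw,vw]\cong k[a,b,c,d]/(ad-bc)$ with $\deg u=1$, $\deg v=\sqrt2$, $\deg w=\sqrt3$. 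This is $\gr_vR$ for $X=\{ad-bc=1\}$ and $v$ the residue norm of the monomial valuation with weights $-1,-\sqrt2,-1-\sqrt3,-\sqrt2-\sqrt3$ on $a,b,c,d$. Each graded piece of $A$ is spanned by one monomial, so homogeneous elements are monomials. Suppose $A$ is finite over $k[\chi^{m_1},\dots,\chi^{m_s}]$. Then a power of each of $u,v,uw,vw$ lies in the monomial ideal $(\chi^{m_1},\dots,\chi^{m_s})$. This forces some $m_i$ onto each of the four extremal rays of the exponent cone, so $s\ge 4>3=\dim A$. Hence there is no weighted polynomial ring in $\dim A$ variables to compare with.

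The repair is easy, because you only need one direction. Suppose the positive-degree homogeneous generators of $\gr_vR$ have degrees $\ge\lambda_{\min}>0$. Then $\bigoplus_{\mu\le\lambda}(\gr_vR)_\mu$ lies in the span of monomials of total degree $\le\lambda/\lambda_{\min}$ in these generators. Its dimension is bounded by the affine Hilbert function of $\gr_vR$, which is eventually a polynomial of degree $d=\dim\gr_vR$. Combined with your bound $\dim\cF^\lambda_vR\ge c\lambda^n$, this gives $d\ge n$.

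Your fallback is also correct, with one caveat. For the non-homogeneous ideal $I=\ker(k[z]\to R)$ and negative weights, you should pass to the homogenization $I^h$ (weight $0$ on the new variable) to get an honest Gröbner fan. A rational point $(0,\alpha')$ of the relatively open cone through $(0,\alpha)$ then gives $\mathrm{in}_{\alpha'}(I)=\mathrm{in}_\alpha(I)$. So $\gr_vR\cong k[z]/\mathrm{in}_\alpha(I)$ is the special fibre of an $\A^1$-flat degeneration of $X$, and has dimension $n$.

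The paper gives no argument of its own here; it quotes \cite[Corollary~6.27]{JW26}. Your fallback is the natural route through Definition~\ref{defi:ft}(i). It is the same weight-perturbation device the paper uses in Lemma~\ref{lem:qmapprox}, where weights are moved while the initial ideal stays fixed. It gives $\dim\gr_vR=n$ directly and needs neither boundedness nor growth estimates. Your main line obtains the dimension from volume growth instead. It therefore depends on the imported fact that finitely generated norms are bounded, plus the Hilbert-function comparison above.
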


We also have the following special case of~\cite[Lemma~6.31]{JW26}:
\begin{lem}\label{lem:fgres}
    Let $(\bar X, D)$ be a compactification of $X$ with $D$ strictly effective and ample, and $v$ a negative valuation on $X$. If $v$ is finitely generated on $\bar X$, it is also finitely generated on $X$.
\end{lem}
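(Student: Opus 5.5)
The plan is to realize $R$ as a quotient of the section ring of $(\bar X,D)$, so that the filtration $\cF_v$ on $R$ becomes the residue of the filtration induced by $v$ on that section ring. Finite generation then passes to the quotient through the associated graded, using criterion (ii) of Definition~\ref{defi:ft}.

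\textbf{Setup.} Replacing $D$ by a multiple, we may assume $D$ is Cartier. Put
\[
S\coloneqq\bigoplus_{m\ge0}\Hnot(\bar X,mD).
\]
We read ``finitely generated on $\bar X$'' as follows. Let $w$ be the norm on $S$ induced by $v$,
\[
w\Bigl(\sum_m \sigma_m\Bigr)\coloneqq\min_m v(\sigma_m/s_D^m),
\]
where we view $\sigma_m/s_D^m\in R$. The hypothesis is that $\gr_wS$ is a finitely generated $k$-algebra. Since $D$ is strictly effective, $X=\bar X\setminus\supp D$. Hence dehomogenization gives a surjection
\[
h\colon S\to R,\qquad \sigma_m\mapsto \sigma_m/s_D^m,
\]
whose kernel is generated by $s_D-1$, where $s_D$ sits in degree one.

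\textbf{Residue norm.} The key step is to check that the residue of $w$ under $h$ is exactly $v$. Take $f\in R$. For $m\gg1$ the element $fs_D^m$ lies in $S$ and lifts $f$, and $w(fs_D^m)=v(f)$; so the residue is at least $v(f)$. Conversely, let $\sum_m\sigma_m$ be any lift of $f$, and write $f_m\coloneqq\sigma_m/s_D^m$, so that $f=\sum f_m$. The ultrametric inequality gives
\[
v(f)\ge\min_m v(f_m)=w\Bigl(\sum_m\sigma_m\Bigr).
\]
Taking the supremum over lifts gives the reverse inequality. In terms of filtrations, this says $\cF^\la_vR=h(\cF^\la_wS)$ for every $\la$.

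\textbf{Passing to the graded.} Since the filtration on $R$ is the image filtration, $h$ induces a map
\[
\cF^\la_wS/\cF^{<\la}_wS\to\cF^\la_vR/\cF^{<\la}_vR
\]
in each degree, and each such map is surjective. The issue is whether these maps are well defined; I would check that $h$ sends $\cF^{<\la}_wS$ into $\cF^{<\la}_vR$, which is immediate from $w\le v\circ h$. Multiplicativity holds because $w$ and $v$ are compatible with products of homogeneous pieces. Taking all degrees together, we get a surjective homomorphism of graded $k$-algebras
\[
\gr_wS\twoheadrightarrow\gr_vR,
\]
whose kernel is the initial ideal of $(s_D-1)$. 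A quotient of a finitely generated algebra is finitely generated, so $\gr_vR$ is finitely generated. Because $v$ is negative on $R$, Definition~\ref{defi:ft}(ii) then shows that $v$ is finitely generated on $X$.

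\textbf{Main obstacle.} The hard point is not technical but definitional. One must make sure that ``finitely generated on $\bar X$'' really refers to $\gr_wS$ as above, since $w$ is not a negative norm: $w(s_D)=0$. Once that interpretation is fixed, the argument only uses the residue identity and surjectivity on graded pieces, and never needs the negative monomial description in Definition~\ref{defi:ft}(i).
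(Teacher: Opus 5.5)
Your argument is correct. The paper gives no proof of its own here; it only quotes the statement as a special case of \cite[Lemma~6.31]{JW26}. Your proof is a self-contained version of what that citation supplies, and it has two ingredients:
\begin{itemize}
\item the dehomogenization $h\colon S\to R$ exhibits $v$ as the residue of the induced norm $w$ on the section ring, with the supremum attained on the homogeneous lift $fs_D^m$, which gives $\cF^\la_vR=h(\cF^\la_wS)$;
\item the associated graded of a quotient filtration is a quotient of the associated graded.
\end{itemize}

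The definitional point you flag as the main obstacle is easily settled. In the standard notion, as used in Lemma~\ref{lem:kolmodelfg} via \cite{XuKstab}, a section $\sigma\in H^0(\bar X,mD)$ is valued by writing it in a local trivialization of $\cO_{\bar X}(D)$ at $c_{\bar X}(v)$. This gives $v(\sigma)=w(\sigma)+m\,v(D)$, where $v(D)$ is the value of a local equation of $D$. The difference is a shift linear in the degree $m$. Hence the re-indexing $(m,\la)\mapsto(m,\la-m\,v(D))$ of the bigrading is additive, and it identifies the two associated graded algebras as $k$-algebras. Finite generation is therefore the same under either reading.

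The reading also matches the paper's use of the lemma. In Lemma~\ref{lem:kolmodelfg} the polarization is $-(K_{\bar X}+\Delta)\sim_\Q-(1+\varepsilon)\sum_iA_X(E_i)E_i$, which is itself strictly effective.

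Two further small remarks. The step ``replace $D$ by a multiple'' is vacuous in the paper's setup, since $D$ is Cartier by definition; in any case, finite generation passes to Veronese subalgebras. Also, your argument never uses ampleness of $D$. Ampleness only matters so that ``finitely generated on $\bar X$'' has its usual meaning.
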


%
%
\subsection{Fubini--Study operators}
Analogous to the function induced by a graded sequence of ideals, any bounded negative norm $\chi$ induces a lsc function $\FS(\chi): X^\na\to [0,\infty)$ via 
\[\FS(\chi)(v)\coloneqq T(\chi;v).\]
It is further continuous if $\chi$ is finitely generated, see~\cite[Theorem~C]{JW26}.
If $\chi=\chi_{\bar X, D}$ is geometric, then 
\begin{equation}\label{equ:FSgeom}
    \FS(\chi)(v)=v(D).
\end{equation}
More generally, let $\chi'=\chi_{\bar X, D}$ be a fixed geometric norm, and $\chi$ a bounded negative norm with $\chi\le C^{-1}\chi'$ for some $C>0$. Then this gives an identification of $\cF^m_\chi R\subseteq H^0(\bar X, \lfloor Cm\rfloor D)$ as a graded linear series. Let $\fb_\bullet(\chi) = (\fb_m(\chi))_m\=(\fb|\cF^m_\chi R|)_m$ be the associated base ideals. Then the same proof as in~\cite[Lemma~8.8]{JW26} gives
\begin{equation}\label{equ:FSbaseideal}
\FS(\chi)(v)= v(CD)-v(\fb_\bullet(\chi)).
\end{equation}

\begin{lem}\label{lem:FScts}
    If $v_i$ is a sequence of bounded negative valuations converging to $v_0$ with respect to the metric $\de_\infty$, and that $v_0$ is finitely generated, then $\FS(\chi_{v_i})(v)\to \FS(\chi_{v_0})(v) $ locally uniformly.
\end{lem}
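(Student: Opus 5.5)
We have $\FS(\chi_{v_i})(v)=T(v_i;v)$ and $\FS(\chi_{v_0})(v)=T(v_0;v)$, so the statement is a comparison of these two sup-ratios. The plan is to obtain a multiplicative two-sided estimate directly from the definition of $T$ and the metric $\de_\infty$. Finite generation of $v_0$ is used only to guarantee that the limit function is continuous. That continuity is what makes the local uniform statement meaningful on the non-compact space $X^\na$, and it lets us work on compact pieces. The reduction to compacta uses the scaling action together with the compactness of $X^\na_\infty/\R_+^\times$.

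First I record the elementary comparison. Set $\e_i\=\de_\infty(v_i,v_0)\to 0$, so that
\[
e^{\e_i}v_0\le v_i\le e^{-\e_i}v_0 .
\]
All these norms are negative on $R\setminus k$. Hence for any $f\in R\setminus k$ we get $e^{-\e_i}\le v_i(f)/v_0(f)\le e^{\e_i}$. For any seminorm $v$, write
\[
\frac{v(f)}{v_i(f)}=\frac{v(f)}{v_0(f)}\cdot\frac{v_0(f)}{v_i(f)} .
\]
On the set where $v(f)\le 0$, taking the supremum over $f$ gives
\[
e^{-\e_i}T(v_0;v)\le T(v_i;v)\le e^{\e_i}T(v_0;v).
\]
The supremum defining $T$ may be restricted to those $f$ with $v(f)<0$. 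This holds whenever $T>0$, i.e.\ when $v$ is centered at infinity. If $v\in X^\beth$, then all ratios are $\le 0$ and both sides vanish, by the convention $T\in[0,\infty)$ as the infimum of admissible $C\ge 0$. So in all cases
\[
|T(v_i;v)-T(v_0;v)|\le (e^{\e_i}-1)\,T(v_0;v).
\]

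Next I deduce local uniformity. Let $K\subset X^\na$ be compact. By~\cite[Theorem~C]{JW26}, $\FS(\chi_{v_0})$ is continuous because $v_0$ is finitely generated. Therefore it is bounded on $K$, say by $M_K$. This gives
\[
\sup_K|\FS(\chi_{v_i})-\FS(\chi_{v_0})|\le (e^{\e_i}-1)M_K\to 0 .
\]
Even without continuity, local boundedness would suffice. That follows from $T(v_0;v)\le T(v_0;\chi')\,T(\chi';v)=T(v_0;\chi')\,v(D)$ for a geometric norm $\chi'=\chi_{\bar X,D}$, using~\eqref{equ:FSgeom}, and $v\mapsto v(D)$ is locally bounded.

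The only delicate step is the treatment of signs in the ratio $v(f)/v_i(f)$. For $f$ with $v(f)>0$ the ratio is negative, so multiplying by $v_0(f)/v_i(f)$ could reverse the inequalities. I handle this by discarding such $f$: they never contribute to a supremum that is $\ge 0$. This is exactly the observation after Definition~\ref{defi:T}.
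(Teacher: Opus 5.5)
Your proof is correct and is essentially the paper's argument. Both rest on the two-sided estimate $e^{-\e_i}T(v_0;v)\le T(v_i;v)\le e^{\e_i}T(v_0;v)$, which comes from the $\de_\infty$ bound, together with boundedness of the continuous function $\FS(\chi_{v_0})$. The only difference is packaging: the paper uses homogeneity to restrict to the compact level set $\{\FS(\chi_{v_0})=1\}$, whereas you bound $T(v_0;\cdot)$ on an arbitrary compact set, and your treatment of the sign of $v(f)$ makes explicit a point the paper leaves implicit.
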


\begin{proof}
    Since $\FS(\chi_{v_0})$ is continuous,
    by homogeneity, it suffices to check convergence on the compact subset $K=\{\FS(\chi_{v_0})(v)=1\}\subset X^\an_\infty$. If $\delta_i\=\de_\infty(v_0; v_i)$, then
    \[
    e^{-\delta_i}=e^{-\delta_i}T(v_0; v)\le T(v_i; v)\le e^{\delta_i}T(v_0; v)=e^{\delta_i}
    \]
    Thus $T(v_i; v)\to T(v_0; v)$ uniformly on $K$
    as $i\to\infty$.
\end{proof}

\subsection{Local K-stability}\label{sec:locKstab}
In this section, we review the notion of a log Fano cone singularity, and record a continuity result of the local $\delta$ invariant with respect to Reeb vector fields.
\begin{defi}
    Let $X=\Spec R$ be a normal affine variety and let $\T\cong \G_m^r$ be an algebraic torus acting on $X$. The action is \emph{good} if it is effective and $x\in X$ is in the closure of any torus orbit. Such a triple $(X, \T, x)$ is a $\T$-variety.
\end{defi}
The torus action on $X$ yields a natural weight decomposition of $R$. Let $N=\Hom(\G_m, \T)$, and put $M=N^\vee$ to be the weight lattice. Then
\[
R=\bigoplus_{\alpha\in M }R_\alpha,
\]
with $R_0=k$. The \emph{Reeb cone} is defined to be
\[\ft_\R^+\coloneqq \{\xi\in N_\R\mid \langle \xi, \alpha\rangle >0 \text{ for all } \alpha \text{ such that } R_\alpha\ne 0\}.
\]
A \emph{Reeb field } $\xi\in \ft_\R^+$ defines a $\T$-invariant quasimonomial valuation $v_\xi$ on $X$, via
\[
v_\xi(f_\alpha)=\langle \xi, \alpha\rangle, \text{ for any } f_\alpha\in R_\alpha, \quad v(\sum f_\alpha) = \min\{\langle \xi, \alpha\rangle \mid f_\alpha\ne 0\}.
\]
\begin{defi}
    A \emph{log Fano cone singularity} is $x\in (X, \Delta, \T, \xi)$, where $x\in (X,\Delta)$ is a klt singularity with a good $\T$-action, and $\xi\in \ft_\R^+$.
\end{defi}
When $\xi$ is rational, $\xi$ generates a $\G_m\cong \langle \xi\rangle$ action on $X$, making $X\setminus\{x\}$ a Seifert $\G_m$-bundle over a log Fano variety $(E, \Delta_E)$. In general, $\xi$ generates a torus action with rank being the rational rank of $v$. 

Analogous to the log Fano case, K-stability of log Fano cones can be detected using valuative invariants and basis type divisors which we now recall. 
For a fixed $m\in \Z_{>0}$, an $(m, \xi)$-basis type divisor is a $\T$-invariant $\Q$-divisor on $X$ of the form
\[
D_m(\xi)=\frac{(n+1)A(\xi)}{nmN_m}\sum_{i=1}^{N_m}\{f_i=0\},
\]
where $N_m=\dim R_m\coloneq \dim \bigoplus_{\langle\xi, \alpha\rangle\le m}R_\alpha$, and $\{f_i\}$ is a $\T$-invariant basis of $R_m$. For each $\T$-invariant $v\in \Val_{X, x}$, set 
\[
S_m(\xi; v)\= \sup\{v(D_m(\xi))\mid D_m(\xi) \text{ is an } (m, \xi)\text{-basis type divisor}\},
\]
where supremum is achieved when $\{f_i\}$ are chosen to be compatible with $v$.
We also define
\[
\delta_m(X, \Delta; \xi) = \inf\{\lct_x(X, \Delta; D_m)\mid D_m \text{ is an $(m, \xi)$-basis type divisor} \} = \inf_{v}\frac{A_{X, \Delta}(v)}{S_m(\xi; v)},
\]
where the latter infimum runs over all $v\in \Val^{\T, *}_{X, x}$, the set of $\T$-invariant valuations centered at $x$ with finite log discrepancy. Using an Okounkov body argument as in~\cite{NALFC1, Huang} (cf.~\cite{BlJ}), we have that the limit
\[
S(\xi; \cF)\=\lim_{m\to \infty} S_m(\xi; \cF)
\]
exists, and for any $\varepsilon>0$, there is $m(\varepsilon)>0$ such that for any $m\ge m(\varepsilon)$, one has
\begin{equation}\label{equ:Sapprox}
S_m(\xi; \cF)\le (1+\varepsilon)S(\xi; \cF)
\end{equation}
uniformly for any $\cF$. In particular, this allows for the following definition and characterization of the local $\d$-invariant:
\[
\delta(X, \Delta; \xi)\= \inf_{v\in \Val_{X, x}^{\T, *}}\frac{A_X(v)}{S(\xi; v)} = \lim_{m\to \infty} \delta_m(X, \Delta; \xi).
\]
As noted before, when $\xi$ is rational, $X$ is an orbifold cone over a log Fano pair $(E, \Delta_E)$. In this case, as observed in~\cite{LW}, we have
\[
\delta(X, \Delta; \xi)=\min\{1, \delta(E, \Delta_E)\}.
\]
The local $\delta$-invariant is in fact continuous with respect to the Reeb field. 
\begin{prop}\label{lem:deltaapprox}
    Let $(X, \Delta; \xi)$ be a K-semistable log Fano cone singularity with $\xi_i\to \xi$. Then 
    \[
    \lim_i\delta(X, \Delta; \xi_i)=\delta(X, \Delta; \xi).
    \]
\end{prop}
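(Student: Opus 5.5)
The plan is to prove a uniform two-sided comparison: there is a function $\e(\xi')\to 0$ as $\xi'\to\xi$ such that
\[
(1-\e)\,\frac{A_{X,\Delta}(v)}{S(\xi;v)}\le \frac{A_{X,\Delta}(v)}{S(\xi';v)}\le (1+\e)\,\frac{A_{X,\Delta}(v)}{S(\xi;v)}
\]
for every $v\in\Val^{\T,*}_{X,x}$. Taking the infimum over $v$ then gives $|\delta(X,\Delta;\xi_i)-\delta(X,\Delta;\xi)|\le \e_i\,\delta(X,\Delta;\xi)\to 0$. Since $\delta\le 1$ for cones, $\delta$ stays bounded and the multiplicative error becomes an additive one. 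K-semistability only enters to guarantee $\delta(X,\Delta;\xi)=1$, so the limit is finite and positive. The argument should in fact give continuity at any $\xi$.

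\textbf{Step 1 (comparing Reeb valuations).} The weights $\alpha$ with $R_\alpha\neq 0$ generate a finitely generated cone $\sigma^\vee\subset M_\R$, and $\xi$ lies in the interior of its dual. By compactness of the slice $\{\alpha\in\sigma^\vee:\langle\xi,\alpha\rangle=1\}$, for $\xi'$ near $\xi$ we get $|\langle\xi'-\xi,\alpha\rangle|\le \e\langle\xi,\alpha\rangle$ on all of $\sigma^\vee$, with $\e=O(|\xi'-\xi|)$. Consequently:
\begin{itemize}
\item $(1-\e)v_\xi\le v_{\xi'}\le(1+\e)v_\xi$;
\item the truncations are nested, $R_{m/(1+\e)}(\xi)\subseteq R_m(\xi')\subseteq R_{m/(1-\e)}(\xi)$;
\item $A_{X,\Delta}(\xi')$ is within a factor $1\pm C\e$ of $A_{X,\Delta}(\xi)$, since $A$ is linear on the Reeb cone.
\end{itemize}

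\textbf{Step 2 (a convex-geometric formula for $S$).} Because $v$ is $\T$-invariant, its filtration is compatible with the weight decomposition. The Okounkov body argument behind \eqref{equ:Sapprox} then expresses $S(\xi;v)$ as
\[
\frac{(n+1)A(\xi)}{n}\cdot\frac{1}{\vol(\Delta_\xi)}\int_{\Delta_\xi}G_v .
\]
Here $\Delta_\xi$ is the truncation $\{\langle\xi,\alpha\rangle\le 1\}$ of a fixed $\T$-equivariant Newton--Okounkov cone $\mathcal C$ fibred over $\sigma^\vee$, and $G_v$ is a concave function on $\mathcal C$ that is homogeneous of degree one and independent of $\xi$. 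The point is that only the truncating half-space depends on $\xi$, while $G_v$ depends only on $v$.

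\textbf{Step 3 (uniform comparison).} By Step 1, $\Delta_{\xi'}$ is sandwiched between the scalings $(1+\e)^{-1}\Delta_\xi$ and $(1-\e)^{-1}\Delta_\xi$. Since $G_v\ge 0$ is homogeneous of degree one, the normalized averages $\vol(\Delta)^{-1}\int_\Delta G_v$ over these three bodies agree up to a factor $1\pm C'\e$, with $C'$ depending only on $n$. Together with the comparison of $A(\xi')$ and $A(\xi)$, this gives $S(\xi';v)=(1\pm C''\e)\,S(\xi;v)$ uniformly in $v$, which yields the displayed inequality.

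The main obstacle is Step 2. One must justify that the limit $S$ is computed by one Okounkov cone simultaneously for all nearby $\xi'$, with $G_v$ independent of the Reeb field. If this is awkward, I would instead work directly with the finite-level invariants $S_m$ and the inclusions of Step 1. Choosing bases compatible with both $v$ and the weight grading, one compares the normalized sums of jumping numbers on $R_m(\xi')$ with those on $R_{m/(1\pm\e)}(\xi)$. Then \eqref{equ:Sapprox}, which is uniform in the filtration, lets one pass to the limit without losing uniformity in $v$.
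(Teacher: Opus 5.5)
Your argument is correct, but it is organized differently from the paper's.

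\textbf{How the paper argues.} The paper proves the two inequalities separately.
\begin{itemize}
\item[(a)] $\limsup_i\delta(X,\Delta;\xi_i)\le\delta(X,\Delta;\xi)$ follows from the pointwise convergence $S(\xi_i;v)\to S(\xi;v)$, since an infimum of continuous functions is upper semicontinuous.
\item[(b)] $\liminf_i\delta(X,\Delta;\xi_i)\ge\delta(X,\Delta;\xi)$ is proved by contradiction on the divisor side, using $\delta=\lim_m\delta_m$ and log canonical thresholds. After rescaling so that $\xi_i\ge\xi$, one has $R_m(\xi_i)\subseteq R_m(\xi)$. Hence an $(m,\xi_i)$-basis type divisor $D$ with $(X,\Delta+cD)$ not klt extends, by adjoining a basis of the remaining weight spaces, to an $(m,\xi)$-basis type divisor $\frac{A(\xi)N_{m,i}}{A(\xi_i)N_m}D+D'$ with $D'\ge 0$. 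This forces $\delta_m(X,\Delta;\xi)<c'$ for all $m\gg0$.
\end{itemize}

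\textbf{How your route compares.} Your nested truncations are the valuative counterpart of that extension step: adjoining sections only adds an effective part, which in your setting is $s_v(\alpha)\ge 0$. Used symmetrically at the level of $S$, they give both inequalities at once, uniformly in $v$, with the explicit bound $(1-C\varepsilon)\delta(X,\Delta;\xi)\le\delta(X,\Delta;\xi')\le(1+C\varepsilon)\delta(X,\Delta;\xi)$. This needs neither the lct description of $\delta$, nor the normalization $\xi_i\ge\xi$, nor K-semistability, so it proves continuity on the whole Reeb cone. The paper's divisor-level formulation, in turn, is closer to how basis type divisors are used later in the proof of Theorem~\ref{thm:qmvaluative}.

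\textbf{Step~2 is not needed.} Your fallback is already complete. Write $N_m(\xi)=\dim R_m(\xi)$, and let $s_v(\alpha)\ge0$ be the sum of the $v$-jumping numbers on $R_\alpha$, which is independent of $\xi$. Then $S_m(\xi;v)=\frac{(n+1)A(\xi)}{nmN_m(\xi)}\sum_{\langle\xi,\alpha\rangle\le m}s_v(\alpha)$. Step~1 gives $S_m(\xi';v)\le c_m\,S_{m/(1-\varepsilon)}(\xi;v)$ with $c_m=\frac{A(\xi')N_{m/(1-\varepsilon)}(\xi)}{(1-\varepsilon)A(\xi)N_m(\xi')}$. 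This constant is independent of $v$ and satisfies $\limsup_m c_m\le\frac{A(\xi')}{A(\xi)}\frac{(1+\varepsilon)^n}{(1-\varepsilon)^{n+1}}$, and the lower bound is symmetric. Since $c_m$ does not depend on $v$, you may let $m\to\infty$ for each $v$ separately, so even the uniformity in \eqref{equ:Sapprox} is superfluous.
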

\begin{proof}
    First note that if $\xi$ is rational, then $\delta(X, \Delta; \xi) =\min\{1, \delta(V, \Delta_V)\}$ where $(V, \Delta_V)$ is the log Fano quotient of $X$ by $\xi$. In particular, $\delta(X, \Delta; \xi)\le 1$ unless $\xi$ is the unique stable rational Reeb field on $X$, in which case the torus action on $X$ has rank $1$, and the conclusion automatically holds. We will thus assume in what follows that the torus action has a higher rank. We may also assume $\xi_i\ge \xi$ since $\delta$-invariants are scaling invariant. Set
    \[
    \delta_m(X, \Delta; \xi)\coloneqq \inf\{\lct_x(X, \Delta; D)\mid D \text{ is a $\T$-invariant $(m, \xi)$-basis type divisor}\},
    \]
    Then $\delta(X, \Delta; \xi) = \lim_m \delta_m(X, \Delta; \xi)$. To simplify notation, we will write $\delta_i\coloneqq \delta(X, \Delta; \xi_i)=\lim_m\delta_{i,m}\coloneqq \lim_m\delta(X, \Delta; \xi_i)$ and $\delta\coloneqq \delta(X, \Delta; \xi)=\lim_m \delta_m\coloneqq \lim_m \delta_m(X, \Delta; \xi)$.

    Since $S(\xi_i; v)\to S(\xi; v)$ for any valuation $v$ centered at $x$, by upper-semicontinuity, we always have $\delta \ge \limsup_i \delta_i$. It is also clear that $\liminf_i\delta_i\ge\delta$ by our choice of $\xi_i$.
    Suppose now that this inequality is strict, i.e. there is some $c\in \R$ such that $\limsup_i\delta_i<c<\delta$. For fixed $c'\in (c, \delta)$ and $0<\varepsilon\ll1$, we may pick $i\gg 0$ such that 
    \[
    c'\frac{A(\xi)\vol(\xi_i)}{A(\xi_i)\vol(\xi)}(1-\varepsilon)>c.
    \]
    Note also that if $\delta_i<c$, then $\delta_{i,m}<c$ for $m=m(i)\gg 0$, and in particular, for each fixed $i$, after potentially passing to larger $m$, we may choose $m(i)$ such that whenever $m\ge m(i)$, $\frac{A(\xi)N_{m,i}}{A(\xi_i)N_m}>(1-\varepsilon)\frac{A(\xi)\vol(\xi_i)}{A(\xi_i)\vol(\xi)}$, where $N_m = \dim \bigoplus_{\langle \xi, \alpha\rangle\le m}R_\alpha$, and $N_{m,i}=\dim \bigoplus_{\langle \xi_i, \alpha\rangle\le m}R_\alpha$. Now for any fixed $m\ge m(i)$, we can find $(m, \xi_i)$-basis type divisor $D$ with $(Y, cD)$ is not klt at $x$. Since $\xi_i\ge \xi$, we can make $D$ into an $(m, \xi)$-basis type divisor by possibly adding additional basis:
    \[
    D(\xi)\coloneqq \frac{(n+1)A(\xi)}{nmN_m}\left(\sum \{f_i=0\}+\sum_j\{g_j=0\}\right)=\frac{A(\xi)N_{m,i}}{A(\xi_i)N_m}D+D',
    \]
    where $(g_j)$ form a basis of $\bigoplus_{\langle \xi, \alpha\rangle \le m<\langle \xi_i, \alpha\rangle}R_\alpha$. Then for the above choice of $i$ and $m$, we have
    \[
    c'D(\xi)= c'\frac{A(\xi)N_{m,i}}{A(\xi_i)N_m} D+D'\ge cD+c'D'.
    \]
    This, combined with $(X, \Delta; cD)$ being not klt at $x$, implies that $(X, \Delta; c'D)$ is not klt at $x$ either, i.e., $\delta_m<c'$. Since this is true for any $m\gg 0$, this gives a contradiction as $\delta=\lim_m\delta_m> c'$.
\end{proof}

We record the following comparison of log canonical thresholds of basis type divisors for later use.
\begin{lem}\label{lem:lctadj}
    Let $x\in(X, \Delta; \xi)$ be a log Fano cone with $\xi$ rational and let $(E, \Delta_E)$ be the log Fano base. Suppose $D$ is a divisor on $E$ such that the corresponding divisor $D_m$ on $X$ obtained by pulling back is an $(m, \xi)$-basis type divisor. Then 
    \[
    \lct_x(X, \Delta; D_m)\ge \min\{\frac{A_{X,\Delta}(\xi)}{S_m(\xi; v_\xi)}, \lct(E, \Delta_E; D)\},
    \]
    with $\frac{A_{X,\Delta}(\xi)}{S_m(\xi; v_\xi)}\to 1$ as $m\to \infty$.
\end{lem}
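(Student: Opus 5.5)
To prove Lemma~\ref{lem:lctadj}, I will use an inversion of adjunction along the exceptional divisor of the weighted blowup defined by $\xi$. Since $\xi$ is rational, after rescaling $v_\xi=c\,\ord_{E_0}$, where $E_0$ is the exceptional divisor of the orbifold blowup $\pi\colon Y\to X$ (the total space of the Seifert line bundle over $E$ with zero section $E_0\cong E$). Adjunction gives $(K_Y+\pi^{-1}_*\Delta+E_0)|_{E_0}=K_E+\Delta_E$.

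Write $D_m=\sum a_i\{f_i=0\}$ with the $f_i$ homogeneous for the $\xi$-grading; this is possible since $D_m$ is pulled back from $E$. Then $\pi^*D_m=\pi^{-1}_*D_m+v_\xi(D_m)E_0$, with $v_\xi(D_m)\le S_m(\xi;v_\xi)$ by the definition of $S_m$. Moreover $\pi^{-1}_*D_m$ is the pullback of $D$ along the bundle projection, so $\pi^{-1}_*D_m|_{E_0}=D$ under $E_0\cong E$. Set $t=\min\{A_{X,\Delta}(\xi)/S_m(\xi;v_\xi),\lct(E,\Delta_E;D)\}$; I may take $t$ slightly smaller and aim to prove that $(X,\Delta+tD_m)$ is lc at $x$, then let $t$ increase. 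Pulling back,
\[
\pi^*(K_X+\Delta+tD_m)=K_Y+\pi^{-1}_*\Delta+t\pi^{-1}_*D_m+\bigl(1-A_{X,\Delta}(\xi)+t\,v_\xi(D_m)\bigr)E_0 .
\]
The choice $t\le A/S_m$ makes the coefficient of $E_0$ at most $1$. Hence it suffices to show that $(Y,\pi^{-1}_*\Delta+E_0+t\pi^{-1}_*D_m)$ is lc near $E_0$, because $\pi^{-1}(x)=E_0$.

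By inversion of adjunction (Kawakita's version, in the orbifold/klt setting), this holds iff $(E_0,\mathrm{Diff}+t\,\pi^{-1}_*D_m|_{E_0})=(E,\Delta_E+tD)$ is lc, which holds since $t\le\lct(E,\Delta_E;D)$. The main technical point is to handle correctly the orbifold structure of $Y$ along $E_0$, which is where the different must be identified with $\Delta_E$ from the Seifert structure. I would cite the standard cone/Seifert computation for this, and alternatively argue directly that $Y$ is locally a product of $E_0$ with a cyclic quotient of a disc, $\G_m$-equivariantly.

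Finally, I need $A_{X,\Delta}(\xi)/S_m(\xi;v_\xi)\to 1$. I would compute $S(\xi;v_\xi)=\lim_m S_m(\xi;v_\xi)$ directly. By the normalization $(n+1)A(\xi)/(nmN_m)$, we have $S_m(\xi;v_\xi)=\frac{(n+1)A(\xi)}{n}\cdot\frac{1}{mN_m}\sum_{\langle\xi,\alpha\rangle\le m}\langle\xi,\alpha\rangle\dim R_\alpha$. Here $N_m\sim \vol\cdot m^n/n!$ and the weighted sum is $\sim \vol\cdot m^{n+1}\cdot n/(n+1)!$, so the average weight is $\sim \frac{n}{n+1}m$. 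Thus $S_m\to A(\xi)$ and the ratio tends to $1$.
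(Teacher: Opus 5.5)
Your proposal is correct and follows essentially the same route as the paper: you extract the plt (Seifert) divisor $E_0\cong E$ over $x$, use $t\le A_{X,\Delta}(\xi)/S_m(\xi;v_\xi)$ to dominate $\pi^*(K_X+\Delta+tD_m)$ by $K_Y+\pi^{-1}_*\Delta+E_0+t\pi^{-1}_*D_m$, apply inversion of adjunction to reduce to $(E,\Delta_E+tD)$, and compute the ratio from the growth of the graded pieces. The only cosmetic slip is that your displayed coefficient of $E_0$ implicitly takes $c=1$. For general $c$ you would divide both $A_{X,\Delta}(\xi)$ and $v_\xi(D_m)$ by $c$, which changes nothing since only their ratio enters.
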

\begin{proof}
    Let $\pi: Y\to X$ be the plt blowup at $x$ extracting $F\cong E$. Then $X$ is the affine cone over $E$ with polarization $L\sim_\Q -A_{X,\Delta}(F)^{-1}(K_E+\Delta_E)$. Write $\tilde D$ for the strict transform of $D_m$. 
    By inversion of adjunction, if $(F, \Delta_F+ t \tilde D|_F)\cong (E, \Delta_E+tD)$ is lc, then $(Y, \pi^{-1}_*\Delta+F+t\tilde D)$ is also lc near $F$. Note that
    \[
    K_Y+\pi^{-1}_*\Delta+F+t\tilde D=\pi^*(K_X+\Delta)+A_{X, \Delta}(F)F+t\tilde D \ge \pi^*(K_X+\Delta+ t'D),
    \] 
    where $t'=\min\{\frac{A_{X,\Delta}(F)}{\ord_F(D_m)}, t\}$, we have $(X,\Delta+t'D)$ is also lc at $x$. This shows
    \[
    \lct_x(X,\Delta; D_m)\ge \min\{\frac{A_{X,\Delta}(F)}{\ord_F(D_m)}, \lct(E, \Delta_E; D)\}.
    \]
    Moreover, note that $\ord_F$ is nothing but the valuation given by a rational multiple Reeb field $\xi$, a direct computation shows
    \[
    \frac{A_{X,\Delta}(\xi)}{S_m(\xi; v_\xi)}=\frac{A_{X,\Delta}(F)}{\ord_F(D_m)} = \left(\frac{n+1}{nmN_m}\sum_{d=1}^m dh^0(E, dL)\right)^{-1}\to 1.
    \]
\end{proof}

%
%
\subsection{Log discrepancy and log canonical thresholds at infinity}
Throughout this section, $X$ is a klt affine variety with no non-constant global invertible functions, and Calabi--Yau in the sense that we are given an (algebraic) nonvanishing $m$-th pluricanonical section $\tilde\Omega$ on $X$, and put $\Omega\=\tilde\Omega^{\otimes 1/m}$. In this case, such volume form is unique up to scaling.
Our goal is to define the log discrepancy function on $X^\na$. The log discrepancy function will also play a crucial role later in the article in analyzing singularities at infinity. 

Before going into precise constructions, let us first remark that all of the constructions in this section work without the assumption that $X$ has no global invertible functions. When there are non-constant global invertible functions, $\Omega$ is then non-unique, and the constructions below will depend on the choice of $\Omega$.

Generalizing a log discrepancy function first defined in~\cite{JonDynBerko}, we define a \emph{log discrepancy function} $A_{X}\colon X^\div\to\Q$ by setting 
\[
A_{X}(t\ord_E)=t(1+\ord_E(\Om))
\]
for $t\in\Q_{\ge0}$ and $E$ a prime divisor over $X$, and we write $A_X$ when $\Delta=0$. More precisely, if $\bar{X}$ is a normal compactification of $X$, then there exists a projective birational morphism $\pi\colon Y\to \bar X$, with $Y$ normal, and a prime divisor $E\subset Y$, such that $v=t\ord_E$. Then $\Omega$ extends as a rational form on $Y$, and $\ord_E(\Omega)$ is its order of vanishing along $E$.
Its restriction to the set of divisorial valuations centered on $X$ agrees with the usual log discrepancy function on $\Q$-Gorenstein varieties. 
\begin{lem}\label{lem:ldpair}
    There exists a smooth projective variety $Y$, a birational map $Y\to \bar X$ for some compactification $\bar X$ of X, and a $\Q$-Cartier divisor $D$ on $Y$ with snc support such that the log discrepancy function $A_{X}$ on $X^\div=Y^\div$ is the log discrepancy function of the pair $(Y,D)$ in the sense that 
    \begin{equation}\label{equ:ldpair}
       A_{X}(v)=A_Y(v)-v(D)
    \end{equation}
   for $v\in X^\div=Y^\div$. 
\end{lem}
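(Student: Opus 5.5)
Choose a normal compactification $\bar X$ of $X$ and take a log resolution $\pi\colon Y\to \bar X$ that is an isomorphism over the smooth locus of $X$ (or at least a resolution of $\bar X$). Arrange that $\pi^{-1}(\partial\bar X)$, the exceptional locus of $\pi$, and the divisor of the rational form $\pi^*\Omega$ all have snc support. Here $\pi^*\Omega$ means the rational section $\pi^*\tilde\Omega$ of $mK_Y$, taken formally to the power $1/m$. This is possible by Hironaka: $\pi^*\tilde\Omega$ is a rational section of $\cO_Y(mK_Y)$, so its divisor $\div(\pi^*\tilde\Omega)$ is a Cartier divisor on $Y$. We further blow up until its support together with $\pi^{-1}(\partial \bar X)\cup\mathrm{Exc}(\pi)$ is snc.

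Define $D\coloneqq -\frac1m\div(\pi^*\tilde\Omega)$, a $\Q$-Cartier $\Q$-divisor on the smooth variety $Y$ with snc support. By construction $K_Y+D\sim_\Q 0$, and $\Omega$, viewed as a rational section of $K_Y$ with $\Q$-coefficients, has divisor $-D$.

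To verify \eqref{equ:ldpair}, let $v=t\ord_E$ with $E$ a prime divisor on some birational model $\mu\colon Y'\to Y$, which we may take smooth. The usual log discrepancy of the sub-pair $(Y,D)$ is computed via $K_{Y'}=\mu^*(K_Y+D)+\sum_F(A_{Y}(\ord_F)-\ord_F(D)-1)F$. Equivalently, $A_Y(\ord_E)-\ord_E(D)=1+\ord_E(K_{Y'/Y})-\ord_E(\mu^*D)$.

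Next compute $\ord_E(\Omega)$. Locally $\Omega=g\,(dy_1\wedge\dots\wedge dy_n)$ on $Y$, with $\ord_F(g)=-\ord_F(D)$ in the $\Q$-sense. Pulling back, $\mu^*(dy_1\wedge\cdots\wedge dy_n)$ vanishes along $E$ to order $\ord_E(K_{Y'/Y})$. Hence $\ord_E(\Omega)=\ord_E(K_{Y'/Y})-\ord_E(\mu^*D)$, and so $A_X(\ord_E)=1+\ord_E(\Omega)=A_Y(\ord_E)-\ord_E(D)$. Scaling by $t$ gives the identity on all of $X^\div=Y^\div$, which coincide since $Y$ and $X$ are birational.

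The main subtlety, though a mild one, is that $\Omega$ is only an $m$-th root of a pluricanonical form, so everything should be phrased with $\tilde\Omega$ and divided by $m$. One must also check that $A_X$ is well defined, i.e. independent of the model on which $E$ appears. This follows because $\ord_E$ of a rational pluricanonical form is intrinsic to the valuation. No klt or Calabi--Yau hypothesis beyond $\tilde\Omega$ being a rational section is actually needed.
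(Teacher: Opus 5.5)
Your proposal is correct and is essentially the paper's proof. You take a log resolution $\pi\colon Y\to\bar X$ of a compactification, set $D=-\frac1m\div(\pi^*\tilde\Omega)$ (the paper writes $-\div_Y(\pi^*\Omega)$), and verify the identity via $\ord_E(\Omega)=\ord_E(K_{Y'/Y})-\ord_E(D)$ for prime divisors $E$ on models $Y'\to Y$. Your extra blow-ups are harmless but unnecessary: $\tilde\Omega$ is regular and nonvanishing on $X$, so $\supp\div(\pi^*\tilde\Omega)$ already lies in the snc divisor $\pi^{-1}(\partial\bar X)\cup\mathrm{Exc}(\pi)$ of a log resolution of $(\bar X,\partial\bar X)$.
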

\begin{proof}
    Let $\bar X$ be a compactification of $X$ and $\pi\colon Y\to \bar X$ a log resolution. Then the extension of $\Omega$ on $Y$ defines a $\Q$-Cartier divisor $D\=-\div_Y(\pi^*\Omega)$ on $Y$ with snc support. Then $A_X(v)=A_Y(v)-v(D)$ for all $v\in\Xdiv$, since for any prime divisor $E\subset Y'\xrightarrow[]{\mu} Y$, $\ord_E(\Omega)=\ord_E(K_{Y'/Y})+\ord_E(-D)$. 
\end{proof}
We emphasize that the divisor $D$ is not necessarily effective. More generally, for any normal compactification $\bar X$ of $X$, let $K_{\bar X}=\sum_E\ord_E(\Omega)E$, where $E$ runs through all irreducible components of $\partial\bar X$, be the Weil divisor defined by $\Omega$. Then
\[
 K_{\bar X}+(1-A_X(\ord_E))E=K_{\bar X}-\sum_E\ord_E(\Omega)E =0.
\]

\begin{cor}
    The log discrepancy function $A_X\colon X^\div\to\Q$ is lsc.
\end{cor}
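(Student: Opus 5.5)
The plan is to reduce the statement to the classical fact that the log discrepancy function of a pair is lower semicontinuous on divisorial valuations, using Lemma~\ref{lem:ldpair}. That lemma provides a smooth projective model $Y$ and an snc $\Q$-divisor $D$ with
\[
A_X(v)=A_Y(v)-v(D)\qquad\text{for } v\in X^\div=Y^\div .
\]
It therefore suffices to show two things on $Y^\div$: that $A_Y$ is lsc, and that $v\mapsto v(D)$ is continuous. Here $Y^\div$ carries the topology of pointwise convergence. Since $Y$ is birational to $X$, this is the same as the topology induced from $X^\na$, because both are given by evaluation on rational functions.

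For the second term, write $D=\sum_j d_jD_j$ as a finite sum. Locally each $D_j$ is cut out by an equation $g_j$, and then $v(D_j)=v(g_j)$ for any $v$ whose center lies in the corresponding chart. Covering $Y$ by finitely many affine charts, I expect $v\mapsto v(D_j)$ to be continuous: on the open set of valuations centered in a given chart it is evaluation at a fixed function. Since $Y$ is proper, every valuation has a center, and the charts cover $Y^\na$. So $v\mapsto v(D)$ is continuous, and hence $-v(D)$ is continuous too, with no issue about the signs of the coefficients $d_j$.

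For the first term, I will use the standard description of $A_Y$ on the smooth variety $Y$ as a supremum of continuous functions. By Jonsson--Musta\c{t}\u{a}, $A_Y(v)=\sup_{(Y',E')}A_Y(r_{Y',E'}(v))$, where the supremum runs over log smooth models over $Y$ and $r$ is the retraction onto the dual complex. On $\QM(Y',E')$ the function $A_Y$ is linear and hence continuous, and each retraction is continuous. A supremum of continuous functions is lsc, so $A_Y$ is lsc on $Y^\val$, and in particular on $Y^\div$. If a more hands-on argument is preferred, one can instead use the characterization $A_Y(v)=\sup$ over the pullback of $v$ to log resolutions, or note that $A_X$ is defined as the largest lsc extension, which is consistent with this.

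The main obstacle is justifying that the topology on $X^\div$ inherited from $X^\na$ agrees with the one on $Y^\div\subset Y^\na$. This matters because the continuity statements above are formulated on $Y^\na$, the space of semivaluations on the proper variety $Y$. My plan is to note that both spaces embed into the space of valuations on the common function field, with the topology of pointwise convergence on $R$ and on local rings of $Y$ respectively. Every function in a local ring of $Y$ is a quotient of elements of $R$, and $v(f/g)=v(f)-v(g)$ for valuations. Hence pointwise convergence on $R$ implies pointwise convergence on these local rings wherever the values are finite, and the identification follows. Combining the two terms, $A_X=A_Y-v(D)$ is the sum of an lsc function and a continuous function, hence lsc.
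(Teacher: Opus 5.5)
Your proposal follows the paper's argument. Lemma~\ref{lem:ldpair} reduces the claim to lower semicontinuity of $A_Y$, which the paper simply cites from \cite[Lemma~5.7]{JM12} and for which your supremum-of-retractions description is the standard reason, plus continuity of $v\mapsto v(D)$; your care about identifying the topologies on $X^\div$ and $Y^\div$ makes explicit what the paper leaves implicit.

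One slip to fix in the continuity step: the set of valuations centered in a fixed affine chart $U$ is closed, since it is cut out by $v\ge 0$ on $\cO_Y(U)$, and it is not open (e.g.\ $t\ord_\infty$ converges to the trivial valuation on $\P^1$ as $t\to 0$). So conclude instead with the pasting lemma for the finite closed cover given by your charts; on each piece, $v(D_j)$ is evaluation at a fixed local equation.
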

\begin{proof}
    The function $v\mapsto v(D)$ on $Y^\div$ is continuous, whereas $v\mapsto A_Y(v)$ is lsc, see~\cite[Lemma~5.7]{JM12}.
\end{proof}
As $X^\div$ is dense in $X^\na$, we can now make the following definition.
\begin{defi}
    The log discrepancy function $A_X\colon X^\na\to\R\cup\{\pm\infty\}$ is the largest lsc extension of $A_X\colon X^\div\to\Q$. In other words,
    \[
    A_X(v)=\liminf_{X^\div\ni w\to v}A(w)
    \]
    for any $v\in X^\na$.
\end{defi}
This definition is not so practical. Following~\cite[Appendix A]{nakstab2}, we can give some more precise information.
\begin{lem}
    The log discrepancy $A_X\colon X^\na\to\R\cup\{\pm\infty\}$ has the following properties:
    \begin{itemize}
        \item[(i)]  
        $A_X(v)>-\infty$ for all $v\in X^\val$;
        \item[(ii)]  
        for $Y$ as in Lemma~\ref{lem:ldpair}, the restriction of $A$ to $X^\val=Y^\val$ satisfies~\eqref{equ:ldpair}, where $A_Y$ denotes the log discrepancy function in~\cite{JM12}.
    \end{itemize}
\end{lem}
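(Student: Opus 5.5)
The plan is to reduce both statements to the corresponding properties of the log discrepancy $A_Y$ of Jonsson--Musta\c{t}\u{a} on the smooth projective variety $Y$ from Lemma~\ref{lem:ldpair}. Fix $Y$, the birational map $Y\to\bar X$ and the snc $\Q$-divisor $D=-\div_Y(\pi^*\Omega)$. Since $X\subset Y$ is open, semivaluations on $X$ and on $Y$ agree, so $X^\na=Y^\na$ and $X^\val=Y^\val$. Set $B(v)\coloneqq A_Y(v)-v(D)$ for $v\in X^\val$. By \eqref{equ:ldpair}, $B=A_X$ on $X^\div$. The goal is to show that, on $X^\val$, $B$ equals the largest lsc extension of $A_X|_{X^\div}$. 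This gives (ii), and (i) then follows at once: $A_Y\ge 0$, and $v(D)<\infty$ for a valuation, since $D$ is a finite $\Q$-combination of prime divisors and $v$ is finite on nonzero rational functions. Hence $A_X(v)=A_Y(v)-v(D)>-\infty$.

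For (ii), first show $A_X\ge B$ on $X^\val$. The function $v\mapsto v(D)$ is continuous on $X^\val$, and even on all of $X^\na$ near valuations, because locally $D$ is given by rational functions that are finite on $v$. The function $A_Y$ is lsc on $Y^\val$ by~\cite[Lemma~5.7]{JM12}. So $B$ is lsc on $X^\val$ and agrees with $A_X$ on $X^\div$, and therefore $A_X(v)=\liminf_{w\to v}A_X(w)\ge B(v)$.

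For the reverse inequality, approximate $v$ by divisorial valuations along which $B$ converges, using the retractions of~\cite{JM12}. Take log smooth models $Y_j\to Y$ that are log resolutions of $D$ together with any further boundary, and let $r_j$ be the retraction $X^\val\to\QM(Y_j,E_j)$. By~\cite{JM12}, $r_j(v)\to v$ and $A_Y(r_j(v))\nearrow A_Y(v)$. Moreover, once $D$ is supported on $E_j$, we have $r_j(v)(D)=v(D)$. So $B(r_j(v))\to B(v)$. Inside each simplicial cone $\QM(Y_j,E_j)$ both $A_Y$ and $v\mapsto v(D)$ are linear, hence continuous, so rational (divisorial) points $w\to r_j(v)$ in that cone satisfy $A_X(w)=B(w)\to B(r_j(v))$. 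A diagonal argument then yields divisorial $w_k\to v$ with $A_X(w_k)\to B(v)$, so $A_X(v)\le B(v)$.

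The main obstacle is the compatibility $r_j(v)(D)=v(D)$ together with the convergence $r_j(v)\to v$ for valuations centered at infinity. For those, the center lies on $\partial\bar X$ and one must choose the models $Y_j$ so that $\partial$ and $D$ are contained in $E_j$. I would handle this by noting that $Y$ is projective, so every valuation on $X$ has a center on $Y$. The JM12 theory on the projective variety $Y$ then applies verbatim, provided each $E_j$ contains $\mathrm{Supp}\,D$.
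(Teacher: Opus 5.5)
Your proposal is correct and follows essentially the same route as the paper: lower semicontinuity of $A_Y(v)-v(D)$ gives $A_X\ge A_Y-v(D)$ on $X^\val$, and the Jonsson--Musta\c{t}\u{a} retractions together with density of divisorial points in the cones $\QM(Y_j,E_j)$ give the reverse inequality. As in the paper, (i) then follows from (ii) since $A_Y\ge 0$, and your choice of models with $\supp D\subseteq E_j$, so that $r_j(v)(D)=v(D)$, makes explicit a point that the paper's identity $A_X'=\sup_{(Y',D')}A_X'\circ r_{Y',D'}$ uses implicitly. One small correction: $X$ need not be open in $Y$ (it may be singular) and $X^\na\ne Y^\na$ in general, but you only use $X^\val=Y^\val$, which holds because $X$ and $Y$ are birational and every valuation has a center on the projective $Y$.
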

\begin{proof}
    Denote by $A_X'\=A_{Y,D}$. Since $A_Y$ is lsc, $v\mapsto v(D)$ is continuous, we have that $A_X'$ is lsc, and hence $A_X'\le A_X$ by definition of $A_X$. On the other hand, for any log smooth model $(Y', D')$ over $Y$, $A_Y\circ r_{Y', D'}$ is continuous, where $r_{Y', D'}: Y^\val\to \QM(Y', D')$ is the retraction map. Hence $A_X'\circ r_{Y', D'}$ is also continuous, and one has $A_X'\circ r_{Y', D'}\ge A_X\circ r_{Y', D'}$, since $A_X$ is lsc, and they agree on the dense subset $\Xdiv$. Now $A_X'=\sup_{(Y', D')}A_X'\circ r_{Y', D'}\ge \liminf_{(Y', D')}A_X\circ r_{Y', D'}\ge A_X$. This finishes the proof of~(ii), from which~(i) follows, see~\cite[Proposition~A.4]{nakstab2}.
\end{proof}

For later use, let $\Delta=\frac 1k\{f=0\}\ge0$ be an effective $\Q$-Cartier $\Q$-divsior for some $f\in R$, we also put 
\[A_{X,\Delta}(v)\=A_X(v)-v(\Delta)\=A_X(v)-\frac 1kv(f),\]
for $v\in X^\val$.
Recall that if $\fa_\bullet$ is a graded sequence of $\fm_x$-primary ideals for $x\in X$ a closed point, then $\lct(\fa_\bullet)$ satisfies
\[
\lct(\fa_\bullet)=\inf_{v}\frac{A_X(v)}{v(\fa_\bullet)},
\]
where $v$ runs through all valuations of finite log discrepancy centered at $x$. The role of the function $v\mapsto v(\fa_\bullet)$ is played by the function $\FS(\chi)$ for a bounded negative norm $\chi$. Drawing the analogy, we define similarly the log canonical threshold of a bounded negative norm:
\begin{defi}
    The \emph{log canonical threshold} of a bounded negative norm is defined by 
    \[\lct(\chi)\coloneqq\sup_{v\in X^\val_\infty}\frac{-A_X(v)}{\FS(\chi)(v)}.\]
\end{defi}
\begin{rmk}
    We have $\lct(\chi)=\lct(\chi^\hom)$ by~\cite[Lemma~8.6]{JW26}.
\end{rmk}
\begin{lem}\label{lem:lctgeld}
    For any bounded negative valuation $v$ we have $\lct(\chi_v)\ge-A(v)$.
\end{lem}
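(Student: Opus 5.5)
The plan is to test the supremum defining $\lct(\chi_v)$ at the single point $w=v$ itself. Since $v$ is a bounded negative valuation, it is centered at infinity, so $v\in X^\val_\infty$ and is an admissible test valuation. By definition,
\[
\lct(\chi_v)=\sup_{w\in X^\val_\infty}\frac{-A_X(w)}{\FS(\chi_v)(w)}\ \ge\ \frac{-A_X(v)}{\FS(\chi_v)(v)}.
\]
The lemma then follows once we know that $\FS(\chi_v)(v)=1$.

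For that, recall $\FS(\chi_v)(v)=T(\chi_v;v)=\sup_{f\in R\setminus k}\frac{v(f)}{\chi_v(f)}$. The norm $\chi_v$ attached to $v$ via the filtration $\cF^\la_vR=\{v\ge-\la\}$ satisfies $\chi_v(f)=-\inf\{\la\ge0\mid v(f)\ge-\la\}=v(f)$ for every $f\in R\setminus k$, because $v(f)<0$ there. Hence every ratio equals $1$, and $T(\chi_v;v)=1$. (One should note that $T$ is well defined here: $\chi_v$ is a bounded negative norm by assumption, and the ratio is taken over $f\notin k$, where $\chi_v(f)<0$.)

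The only point needing care is the sign of $A_X(v)$. If $-A_X(v)\le 0$, the inequality is still just the statement that the supremum dominates the value at $v$, so no case split is needed. We also use that $A_X(v)>-\infty$ for valuations by the earlier lemma, so the quotient is a well-defined real number or $-\infty$. No step here seems to be a genuine obstacle; the argument reduces to unwinding the definitions of $\FS$ and $T$.
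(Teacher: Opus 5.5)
Your proposal is correct and follows the paper's argument: you test the supremum defining $\lct(\chi_v)$ at $w=v$ itself, and you use $\FS(\chi_v)(v)=T(\chi_v;v)=1$, which holds because $\chi_v=v$ on $R\setminus k$. Your remark that the argument also covers $A_X(v)\ge 0$ and $A_X(v)=+\infty$, with no case split, is right as well.
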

\begin{proof}
    This follows from unwinding the definitions and using $\FS(\chi_v)(v)=1$.
\end{proof}
%
%
\subsection{Polarized affine Calabi-Yau varieties}\label{sec:PCY}
The notion of polarized affine Calabi--Yau varieties was first introduced in~\cite{Sunbubble}. 
\begin{defi}
    A \emph{polarized affine Calabi--Yau variety} is a pair $(X, \Omega, v_0)$ with $(X=\Spec R, \Omega)$ a klt affine Calabi--Yau variety and $v_0$ a finitely generated negative valuation 
    on $X$ such that $X_{v_0}=\Spec\gr_{v_0}R$ is klt.
\end{defi}
While detailed study of the geometry of polarized affine Calabi--Yau varieties will be postponed to later sections, we give some examples and remarks relating them to more familiar objects that already appeared in the literature. 
\begin{exam}
    For a finitely generated negative valuation $v_0$, $(X_{v_0}\coloneqq \Spec \gr_{v_0}R, \xi_{v_0})$ is a polarized affine cone in the sense of~\cite{CSz18}, where $\xi_{v_0}$ is the Reeb vector corresponding to the (positive) grading on $\gr_{v_0}\!R$. It is a log Fano cone singularity if it is further klt.
\end{exam}
In general, for an affine variety $X$ with a finitely generated negative valuation $v_0$, the infinity cone $X_{v_0}$ may not have klt singularities.
We will see later that $A_X(v_0)<0$ is a necessary condition that ensures this. For the moment, 
Let us illustrate this by comparing to other affine varieties admitting a complete Calabi--Yau metric, as studied in~\cite{TY1, CLCY}:
\begin{exam}
    Let $(\bar X, E)$ be a plt log Calabi--Yau pair, $X=\bar X\setminus E$, and $v_0=\ord_E$. Then by adjunction $X_{v_0}$ is the affine cone over a log Calabi--Yau pair $(E, \Delta_E)$, which is lc at the cone point, by~\cite[Lemma 3.1]{SingMMP}. Note that in this case $A_X(v_0)=0$.
\end{exam}
More generally, we have:
\begin{rmk}
    Suppose $(X, \Omega)$ is affine log Calabi--Yau in the sense of~\cite{KY}, that is, $X=\bar X\setminus D$ where $(\bar X, D)$ is a dlt log Calabi--Yau pair, and $-D$ is the (Weil) divisor defined by $\Omega$. Note that here $X$ may have non-constant invertible functions, and the choice of $\Omega$ is relevant when defining the log discrepancy function. With this choice, we have $A_\Omega=A_{\bar X, D}\ge0$. We refer the readers to~\cite{Liu26} for more details on the study of finite generation of valuations in this setting.
\end{rmk}
In view of this remark, together with the result of~\cite{CLCY}, the log discrepancy condition should morally be thought of as a complex structure non-collapsing condition.



\section{Compactifications of polarized affine Calabi--Yau varieties}\label{sec:comp}
We now study the geometry of polarized affine Calabi--Yau varieties in more detail. The main goal is to prove Theorem A. This will be done in two steps. The divisorial case follows from the Rees construction; in general, we exploit techniques used in higher rank finite generation theory, as developed in~\cite{LXZ, XZstab}.
%
%
\subsection{Models and $\Q$-complements}
We first set up and recall some terminologies that we will use later. Throughout, we fix $X$ to be a klt affine Calabi--Yau variety.
While a general affine variety $X$ may have many different compactifications, we are mostly interested in compactifications with special geometric structures. A prototypical example of such compactifications is given by the setting in~\cite{TY2}. This motivates us to make the following definition. 
\begin{defi}
    A \emph{Tian--Yau type compactification} of $X$ is a plt pair $(\bar X, E)$ such that $\bar X$ is a compactification of $X$, the prime divisor $E$ is the scheme-theoretic complement of $X$ in $\bar X$ with $A_X(E)<0$, and that $-(K_{\bar X}+E)$ is ample.
\end{defi}
We remark that not all affine varieties will admit a Tian--Yau type compactification, since this in particular implies that $\ord_E$ defines a negative divisorial valuation on $X$. 
On the other hand, by adjunction, the divisor $E$ appearing in a Tian--Yau type compactification has klt singularities, and if we write $(K_{\bar X}+E)|_E=K_E+\Delta_E$, then $(E, \Delta_E)$ is a log Fano pair. This suggests that $E$ should be thought of as an analog of a Koll\'ar component over a klt singularity, which we refer to~\cite{LXKcomp} for a detailed study.

Slightly generalizing the notion of compactifications, we define models of $X$ to allow resolutions of $X$.
\begin{defi}
    A \emph{log smooth (resp. toroidal) model} $(Y, E)$ of $X$ is a projective log smooth (resp. toroidal) model over a compactification of $X$, which restricts to an isomorphism over the smooth locus of $X$.
\end{defi}
\begin{rmk}
    If $X$ is smooth, a log smooth (resp. toroidal) model of $X$ is a compactification of $X$.
\end{rmk}

We also recall the definition of ($\Q$-)complements. For our purposes, we shall need a notion of special $\Q$-complements which is slightly more general than the ones defined in~\cite{LXZ}.
\begin{defi}
    Given a log canonical pair $(\bar X, \Delta)$ with $-(K_{\bar X}+\Delta)$ ample, a \emph{$\Q$-complement} is an effective $\Q$-Cartier $\Q$-divisor $\Gamma\sim_{\Q} -(K_{\bar X}+\Delta)$ such that $(\bar X, \Delta+\Gamma)$ is log canonical. A $\Q$-complement is an $N$-complement for some $N\in \Z_{>0}$ if one further has $N(K_{\bar X}+\Delta+\Gamma)\sim 0$.
\end{defi}

\begin{defi}
    Let $(\bar X, F)$ be a Tian--Yau type compactification of $X$.
    We say a $\Q$-complement $\Gamma$ of $(\bar X, F)$ is \emph{special with respect to a log smooth (resp. toroidal) model $(Y,E)$} if there is a rational map $p: Y\dashrightarrow \bar X$ which is an isomorphism over the smooth locus of $X$, such that the birational transform $p^{-1}_*\Gamma\ge G$ for some effective ample $\Q$-divisor $G$ on $Y$ whose support does not contain any stratum of $(Y,  E)$. We say that a valuation $v\in X^\val_\infty$ is a \emph{monomial lc place of $\Gamma$} if $v\in \lc(\Gamma; Y, E)\coloneqq \QM(Y, E)\cap  \mathrm{LC}(\bar X, F+\Gamma)$.
\end{defi}
%
%
\subsection{Tian--Yau type compactification}
In this section, we prove Theorem~A for divisorial valuations. As a preliminary step toward the general case, we also show that any polarized affine Calabi--Yau variety admits a Tian--Yau type compactification.


As noted previously, if $v_0$ is divisorial, then the proj construction gives a projective compactification of $X$. 
In fact, this compacfication is of Tian--Yau type. For later use, we record a slightly more general version of this observation. For a $\Q$-Cartier $\Q$-divisor $D=\frac 1m \{f=0\}$ on $X$ and a finitely generated negative valuation $v$, we will write $D_v\=\frac 1m \{f_v=0\}$, where $f_v$ is the image of $f$ in $\gr_vR$. 
\begin{lem}\label{lem:divfg}
    Let $v_0$ be a negative divisorial valuation on $X$, and $D=\frac1k\{f=0\}\ge0$ a $\Q$-Cartier $\Q$-divisor as above so that $(X, D)$ remains klt. 
    Then $v_0$ is finitely generated with $(X_{v_0}, D_{v_0})$ having klt singularities iff 
    there exists a compactification $(\bar X, \bar D+E)$ of $X$, where $\bar D$ is the closure of $D$ in $\bar X$,
    such that there is a unique strictly effective $\Q$-Cartier prime divisor $E=\bar X\setminus X$ with $v_0=c\ord_E$ for some $c>0$, $A_{X, D}(E)<0$, and that $-(K_{\bar X}+\bar D+E)$ is ample, $(\bar X,\bar D+E)$ is plt. 
    In this case, $-A_{X,D}(v_0)=A_{X_{v_0}, D_{v_0}}(\xi_{v_0})>0$ and $E$ is an lc place of a $\Q$-complement $\Gamma$ of $(\bar X, \bar D+E)$.
\end{lem}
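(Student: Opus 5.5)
The plan is to use the Rees construction to pass between a finitely generated divisorial valuation and a compactification. The klt condition on the cone will then be matched with the plt condition on the pair by adjunction, in the same way as the correspondence between Koll\'ar components and klt degenerations in the local setting.

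For the forward direction, assume $v_0=c\ord$ is finitely generated, and rescale so that $v_0$ is $\Z$-valued with value semigroup generated by $1$. The Rees algebra $\cR_{v_0}=\bigoplus_m t^m\cF^m_{v_0}R$ is then finitely generated over $k[t]$. Set $\bar X=\Proj\cR_{v_0}$, a projective compactification of $X$ with $X=\{t\neq0\}$. The boundary $E=\{t=0\}=\Proj\gr_{v_0}R$ is the quotient of $X_{v_0}\setminus\{0\}$ by the $\G_m$-action generated by $\xi_{v_0}$. Since $\gr_{v_0}R$ is a domain, $E$ is a prime divisor, and $\ord_E=v_0$ up to scaling. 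Moreover $\bar X$ is the projective cone degeneration picture in reverse: the affine cone over $E$ with respect to $\cO(1)$ is exactly $X_{v_0}$. The key point is that $(\bar X,\bar D+E)$ near $E$ is locally analytically modeled on the Seifert bundle. More precisely, the deformation to the normal cone identifies $X_{v_0}$ with the cone over $(E,\cO_{\bar X}(E)|_E)$, and $\bar D$ degenerates to $D_{v_0}$.

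By the standard cone criterion (\cite[Lemma 3.1]{SingMMP}), $(X_{v_0},D_{v_0})$ is klt at the vertex iff two conditions hold. First, $(E,\Delta_E+\bar D|_E)$ must be klt, where $K_E+\Delta_E+\bar D|_E=(K_{\bar X}+\bar D+E)|_E$ by adjunction. Second, $-(K_E+\Delta_E+\bar D|_E)\sim_\Q r\,E|_E$ must hold with $r>0$. Inversion of adjunction then shows that $(\bar X,\bar D+E)$ is plt near $E$; away from $E$ it is klt since $(X,D)$ is klt.

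For the canonical class, the computation after Lemma~\ref{lem:ldpair} gives $K_{\bar X}+\bar D+(1-A_{X,D}(E))E\sim_\Q0$. Hence $-(K_{\bar X}+\bar D+E)\sim_\Q -A_{X,D}(E)E$. Since $E$ is ample ($\cO(1)$ on $\Proj$), ampleness of $-(K_{\bar X}+\bar D+E)$ is equivalent to $A_{X,D}(E)<0$. The same relation shows $r=-A_{X,D}(E)$, which matches the log discrepancy of $\xi_{v_0}$ on the cone, namely $A_{X_{v_0},D_{v_0}}(\xi_{v_0})=-A_{X,D}(v_0)$. The klt hypothesis on the cone forces this to be positive, which gives the remaining ample condition.

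For the converse direction, given the compactification we have $\cF^m_{v_0}R=\Hnot(\bar X,mE)$ for divisible $m$, using normality of $\bar X$ and the fact that $E$ is the whole boundary. Since $E$ is $\Q$-Cartier and ample, the section ring, and hence $\cR_{v_0}$, is finitely generated, so $v_0$ is finitely generated. Reading the cone criterion backwards then gives klt of $(X_{v_0},D_{v_0})$. Here adjunction gives $(E,\Delta_E+\bar D|_E)$ klt, and the anticanonical class relation gives the positivity.

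For the final claim that $E$ is an lc place of a $\Q$-complement, I would take a general member $\Gamma\in|-m(K_{\bar X}+\bar D+E)|/m$ for $m\gg0$ divisible. Bertini together with plt-ness keeps $(\bar X,\bar D+E+\Gamma)$ lc, and $E$ remains an lc place because its coefficient is $1$.

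The main obstacle is the precise identification of $X_{v_0}$ with the orbifold cone over $(E,E|_E)$ including the boundary and different, since $E$ need not be Cartier. I would handle it via the Seifert $\G_m$-bundle description in \cite{SingMMP}/\cite{LXKcomp}, by comparing the plt blowup of the vertex of $X_{v_0}$ with a neighborhood of $E$ in $\bar X$.
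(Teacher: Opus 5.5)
Your forward direction (finite generation and klt $\Rightarrow$ compactification) is arranged differently from the paper's. You apply the cone criterion to $X_{v_0}$ as an orbifold cone over $E$, then use inversion of adjunction for $E\subset\bar X$. That is exactly why you must identify the Seifert branch divisor of $X_{v_0}\setminus\{0\}\to E$ with $\mathrm{Diff}_E(0)$, the obstacle you flag but only sketch.

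The paper reverses the order. It works on $\tilde X=\Spec\bigoplus_m \cF^m_{v_0}R\,t^m$, the cone over $\bar X$ with respect to $E$, in which $X_{v_0}=\{t=0\}$ is a Cartier divisor. Since $\bar X$ is normal, $\tilde X\setminus\{x\}\to\bar X$ has no codimension-one branching, so $K_{\tilde X}=\pi^*K_{\bar X}$ off the vertex. Inversion of adjunction for the Cartier divisor $X_{v_0}$, whose different is zero, makes $(\tilde X,\tilde D+X_{v_0})$ plt near $X_{v_0}$. It is plt elsewhere because $\tilde X\setminus X_{v_0}\cong X\times\mathbb{G}_m$. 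Koll\'ar's cone lemma for $\tilde X$ over $\bar X$ then gives that $(\bar X,\bar D+E)$ is plt and $-(K_{\bar X}+\bar D+E)\sim_\Q rE$ with $r>0$, without computing any different. Your arrangement also works once the identification is supplied, but the paper's removes your main obstacle. You also need $\bar X$ normal before any adjunction. This follows from normality of $\gr_{v_0}R$; the paper cites \cite{JW26}.

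The genuine gap is in the converse direction. From $\cF^m_{v_0}R=\Hnot(\bar X,mE)$ you only get that $\gr_{v_0}R=\bigoplus_m\Hnot(\bar X,mE)/\Hnot(\bar X,(m-1)E)$ is a graded subring of the section ring $\bigoplus_m\Hnot(E,mE|_E)$. The cone criterion is a statement about the section ring. If some restriction map $\Hnot(\bar X,mE)\to\Hnot(E,mE|_E)$ is not surjective, then $\gr_{v_0}R$ is a proper subring that agrees with the section ring in large degrees. It is therefore not normal at the vertex, so $X_{v_0}$ cannot be klt, and you cannot simply read the criterion backwards.

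You need surjectivity for all $m\ge1$, and the paper gets it from Kawamata--Viehweg vanishing. The pair $(\bar X,\bar D+(1-\varepsilon)E)$ is klt, and $(m-1)E-(K_{\bar X}+\bar D+(1-\varepsilon)E)\sim_\Q(m-1+\varepsilon-A_{X,D}(E))E$ is ample because $A_{X,D}(E)<0$. Hence $H^1(\bar X,\cO_{\bar X}((m-1)E))=0$. Alternatively, the cone lemma applied to $\tilde X$ shows $(\tilde X,\tilde D+X_{v_0})$ is plt, so the Cartier divisor $X_{v_0}$ is normal and $(X_{v_0},D_{v_0})$ is klt by adjunction. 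With this step added, your log discrepancy identity and your construction of the $\Q$-complement agree with the paper.
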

\begin{proof}
    Up to scaling, we may assume $v_0=\ord_F$ for some prime divisor $F$ at infinity. 
    Given a compactification $(\bar X, \bar D+E)$ as described, and $v_0=\ord_E$, we have $-A_{X, D}(E)E\sim_\Q -(K_{\bar X}+E+\bar D)$ is ample, and so $E$ is ample.
    By Kawamata--Viehweg vanishing we have
    \[
    X_{v_0} = \Spec \bigoplus_m\frac{H^0(\bar X, mE)}{H^0(\bar X, (m-1)E)}\cong \Spec \bigoplus_mH^0(E, mE|_{E}).
    \]
    Note also that 
    \[
    A_{X,D}(E)E|_E\sim_\Q(K_{\bar X}+\bar D+E)|_E\sim_\Q K_E+\Delta_E+\bar D|_E, 
    \]
    where $\Delta_E=\mathrm{Diff}_E(0)$.
    We have $A_{X_{v_0},D_{v_0}}(\xi_{v_0})=-A_{X,D}(E)>0$ and $(X_{v_0}, D_{v_0})$ is klt. 
    
    Conversely, suppose $v_0$ is finitely generated, we can define $\bar X = \Proj\bigoplus_m \cF_{v_0}^m R$, where $E=\bar X\setminus X=\Proj \gr_{v_0}R$ is the unique ample $\Q$-Cartier prime divisor at infinity with $v_0=\ord_E$. In particular, $\bar X$ is normal by~\cite[Theorem~B]{JW26}, and $K_{\bar X}+E$ is $\Q$-Cartier. 
    If we put $\tilde X=\Spec \bigoplus_m\cF^m_{v_0}Rt^m$ and $\tilde D=C(\bar D)$ is the divisor induced by $\bar D$, then the Rees construction identifies $(X_{v_0}, D_{v_0})$ as the central fiber. Away from the cone point $x$, $\pi: \tilde X\setminus\{x\}\to \bar X$ is a Seifert $\G_m$-bundle, and we have    
    $K_{\tilde X\setminus\{x\}}|_{\tilde X\setminus\{x\}}=\pi^*(K_{\bar X})$. Since $(X_{v_0}, D_{v_0})$ is klt, we have $(\tilde X, \tilde D+X_{v_0})$ is plt near $X_{v_0}$ by inversion of adjunction. Thus $(\tilde X, \tilde D+X_{v_0})$ is plt since $(X, D)$ is klt. By~\cite[Lemma~3.1]{SingMMP}, this shows $(\bar X, \bar D+E)$ is plt and $-(K_{\bar X}+\bar D+E)\sim_\Q rE$ for some $r>0$, hence ample.  
    
    Finally, to see the last assertion, we can take $\Gamma\in |-(K_{\bar X}+\bar D+E)|_\Q$ to be a general effective $\Q$-divisor. 
\end{proof}
In general, finitely generated quasimonomial valuations can be approximated by divisorial ones.
We have the following analog of~\cite[Lemma 2.10]{LX18} (see also~\cite[Lemma 4.4]{LXZ}). 
\begin{lem}\label{lem:qmapprox}
    Let $v$ be a finitely generated negative quasimonomial valuation. Let $(Y, E)$ be a log smooth model of $X$ such that $v\in \QM_\eta(Y, E)$ for some codimension $q$ point $\eta\in Y$, where $q$ is the rational rank of $v$. Then for all valuations $w\in \QM_\eta(Y, E)$ sufficiently close to $v$, there is an isomorphism $\gr_vR\cong \gr_{w}R$ sending generators to generators. 
\end{lem}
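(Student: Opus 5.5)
The plan follows the local argument of \cite[Lemma~2.10]{LX18}, transplanted to the negative setting. By Definition~\ref{defi:ft}, $\gr_vR$ is finitely generated. I would pick homogeneous generators $\bar f_1,\dots,\bar f_m$ of $\gr_vR$ with lifts $f_1,\dots,f_m\in R$, and a finite set of homogeneous relations $\bar g_1,\dots,\bar g_s$ generating the kernel of $k[z_1,\dots,z_m]\to\gr_vR$. The weights are $z_j\mapsto -v(f_j)$, so that $v$ is the residue of the monomial valuation with these weights; this uses criterion (i) of Definition~\ref{defi:ft}. Each relation $\bar g_l$ lifts to an element $G_l(f_1,\dots,f_m)\in R$. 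Its $v$-value is strictly larger than the weighted degree of $\bar g_l$, and by finite generation it can be rewritten as $\sum_\beta c_{l,\beta} f^\beta$ in which every monomial has strictly larger $v$-value. The finitely many equations $G_l(f)=\sum_\beta c_{l,\beta}f^\beta$ record all the combinatorial data needed.

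The second step is to check that these finitely many strict inequalities persist for $w$ near $v$ in $\QM_\eta(Y,E)$. On $\QM_\eta(Y,E)\cong\R^q_{\ge0}$, the function $w\mapsto w(f)$ is piecewise linear for each fixed $f$. Since $v$ has rational rank $q$ equal to the dimension of the cone, $v$ is a generic point of the cone. So for the finitely many functions involved ($f_j$, the initial terms of $G_l$, and the monomials $f^\beta$), there is a neighbourhood of $v$ on which each is linear and on which the initial term of each $f_j$ in the coordinates $y$ is the same as for $v$. The strict inequalities among finitely many values therefore remain strict in a neighbourhood. Moreover, by genericity of $v$ the initial forms $\mathrm{in}_v(f_j)$ are monomial-type elements in $\gr$ of the model, and they agree with $\mathrm{in}_w(f_j)$ in the common graded ring $\bigoplus \widehat{\cO}_{Y,\eta}$-associated grading, which is independent of $w$ as $\kappa(\eta)[y_1^{\pm},\dots,y_q^{\pm}]$-subrings.

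Next I would show that $\bar f_j^w$ generate $\gr_wR$ with the same relations. The argument is the standard one. The map $k[z]/(\bar g_l)\to\gr_wR$ is well defined because the relations persist. For surjectivity, every $f\in R$ is written as a polynomial in the $f_j$ with a leading part computing $v(f)$; since $\gr_vR$ embeds into the $q$-dimensional torus-graded ring at $\eta$, and the embedding $\gr_wR\hookrightarrow$ the same ring, the images of $\gr_vR$ and $\gr_wR$ coincide as subalgebras, namely the subalgebra generated by the $\mathrm{in}(f_j)$. Injectivity follows because a relation among the $\mathrm{in}_w(f_j)$ is a relation among the same elements of the common ambient ring, hence lies in the ideal of the $\bar g_l$. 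This gives the isomorphism $\gr_vR\cong\gr_wR$ sending $\bar f_j^v\mapsto\bar f_j^w$.

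The main obstacle is the surjectivity step. Showing that every element of $\gr_wR$ is hit requires controlling infinitely many $f\in R$ with only finitely many inequalities. Here I would use negativity and boundedness. Each $\cF^\la_wR$ is finite dimensional and, by the bounds $C\chi_0\le w\le C^{-1}\chi_0$ uniformly for $w$ near $v$, is sandwiched between $\cF^{C^{-1}\la}_vR$ and $\cF^{C\la}_vR$. A dimension count in each degree then closes the argument: the volumes and Hilbert functions of the subalgebras coincide, following the approach of \cite[Lemma~4.4]{LXZ}.
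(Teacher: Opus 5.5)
\textbf{The gap is surjectivity.} Your Steps 1 and 2 assemble the right data. The lifted relations $G_l(f)=\sum_\beta c_{l,\beta}f^\beta$ are the paper's $\phi_j=\psi_j+\tau_j$. Rational independence of $\alpha$ is what keeps the $\bar g_l$ homogeneous and the finitely many strict inequalities strict for $w$ near $v$. Embedding $\gr_vR$ and $\gr_wR$ into the common ring $A=\kappa(\eta)[y_1^{\pm1},\dots,y_q^{\pm1}]$, where $\bar f_j$ goes to the same leading monomial $c_jy^{\beta_j}$ for both $v$ and $w$, also correctly gives well-definedness and injectivity.

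What is not established is that the $\bar f_j^w$ generate $\gr_wR$. Your sentence asserting that the images of $\gr_vR$ and $\gr_wR$ in $A$ coincide is precisely what has to be proved. The dimension count you propose cannot prove it. The sandwich $\cF^{C^{-1}\la}_vR\subseteq\cF^\la_wR\subseteq\cF^{C\la}_vR$ only compares Hilbert functions at different levels, and equality of volumes is an asymptotic statement. Even knowing $B:=k[c_1y^{\beta_1},\dots,c_my^{\beta_m}]\subseteq\mathrm{im}(\gr_wR)$ with equal volumes would not force equality (think of $k[t^2,t^3]\subset k[t]$). You would need $\dim\cF^\la_wR$ to equal the $w$-graded Hilbert function of $B$ in every degree, and nothing in your argument bounds $\dim\cF^\la_wR$ from above that sharply.

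\textbf{The missing idea} is that the finitely many relations from Step 1 already control every $f\in R$ uniformly, through a standard-basis (division) argument; no counting is needed.
\begin{enumerate}
\item Let $u'$ be the monomial valuation on $k[z_1,\dots,z_m]$ with $u'(z_j)=w(f_j)<0$. Given $f\in R$, choose $\phi$ with $\phi(f_1,\dots,f_m)=f$ and $u'(\phi)$ maximal. This exists because $u'$ takes values in a discrete subset of $(-\infty,0]$.
\item Suppose $\mathrm{in}_{u'}\phi$ vanished at $(c_jy^{\beta_j})_j$. Then it would lie in $P=(\bar g_l)=\ker(k[z]\to A)$. The $\bar g_l$ are $\Z^q$-homogeneous, hence $u'$-homogeneous, so you can write $\mathrm{in}_{u'}\phi=\sum_l h_l\bar g_l$ with $u'$-homogeneous $h_l$.
\item Replace $\phi$ by $\phi-\sum_l h_l\bigl(\bar g_l-\sum_\beta c_{l,\beta}z^\beta\bigr)$. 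This does not change its image in $R$, but by your persistent inequalities it strictly increases $u'$, contradicting maximality.
\item Hence $w(f)=u'(\phi)$ and $\mathrm{in}_w f=(\mathrm{in}_{u'}\phi)(c_1y^{\beta_1},\dots,c_my^{\beta_m})\in B$. This is surjectivity, on a neighbourhood of $v$ cut out by finitely many conditions.
\end{enumerate}

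This is what the paper does in different language. It takes $v'$ to be the residue norm of $u'$, so that $\gr_{v'}R$ is generated by the $\bar f_j$ by construction. It shows $\ker\pi_{v'}$ is independent of $\alpha'$ using $\phi_j=\psi_j+\tau_j$, and then proves $v'=v_{\alpha'}$ by comparing with the leading Laurent monomials $g_i$. That equality is exactly your missing surjectivity.
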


\begin{proof}
    We may assume $\mu\colon Y\to \bar X$ is a model over a normal compactification $\bar X$ of $X$, such that $\partial \bar X$ supports an ample divisor $D$. After possible further blowups, we may also assume $E\subset Y\setminus\mu^{-1}(X)$, and that $Y$ dominates a log resolution of $(\bar X, D)$.
    Pick a regular system $y_1,\dots,y_q$ of parameters of $\cO_{Y,\eta}$ with respect to which $v=v_\alpha$ for some $\alpha=(\alpha_1, \dots, \alpha_q)\in\R_+^{q}$, where $\a_1,\dots,\a_q$ are rationally independent. 
    Then pick a finite subset $S\subset R\setminus k$ with the following property: for each point $\xi\in Y\setminus \mu^{-1}(X)$, and each irreducible component $E$ of $Y\setminus \mu^{-1}(X)$ containing $\xi$, there exist $f,g\in S$ such that $f/g\in\cO_{Y,\xi}$ and $\div(f/g)=E$ at $\xi$. 
    
    As $v$ is of finite type, there exist generators $f_1, \ldots, f_m\in R\setminus k$, whose images $\bar f_1, \ldots, \bar f_m\in \gr_vR$ generate $\gr_vR$ as a $k$-algebra. 
    For later purposes we assume $S\subset\{f_1,\dots,f_m\}$. 
    Then we have a surjection $\pi_v\colon k[x_1, \dots, x_m]\to \gr_vR, x_i\mapsto \bar f_i$. Here $k[x_1,\dots,x_m]$ is equipped with the monomial grading where $x_i$ has weight $v(f_i)$. The surjection $\pi_v$ lifts to a surjection $\pi\colon k[x_1, \dots, x_m]\to R, x_i\mapsto f_i$, and if $u$ is the monomial valuation on $k[x_1,\dots,x_m]$ with $u(x_i)=v(f_i)<0$,
    then $v$ equals the residue seminorm of $u$ under $\pi$, that is, $v(f):=\sup\{u(\phi)\mid\pi(\phi)=f\}$ for $f\in R\setminus\{0\}$. 
    Moreover, if $\Phi$ denotes the (negative of the) common value semigroup of $u$ and $v$, then $\pi$ induces a surjection of extended Rees algebras
    \[
    \bigoplus_{\la \in \Phi} t^\la\cF_u^\la k[x_1, \dots, x_m]=:\cR_u\to \cR_{v} := \bigoplus_{\la\in \Phi} t^\la \cF^\la_vR,
    \]
    which degenerates $\ker\pi$ to $\ker\pi_v$. By flatness of the extended Rees algebras over $k[t^{\Phi}]$,  any element $\bar\phi\in \ker\pi_v$ lifts to an element $\phi\in\ker\pi$.

    Now pick $\alpha'\in \R_+^{\times q}$ with $\a'\approx\a$. Let $u'$ be the monomial valuation on $k[x_1,\dots,x_m]$ with $u'(x_i)=v_{\a'}(f_i)<0$ for all $i$, and $v'$ the residue norm on $R$ induced by $u'$ under $\pi$. Then $v'$ is finitely generated and we have a surjection 
    $\pi_{v'}\colon k[x_1,\dots,x_m]\to\gr_{v'}R$. We claim that $\ker(\pi_{v'})$ is independent of $\a'$ for $\a'\approx\a$, so that $\gr_{v'}R\cong\gr_vR$ 
    by sending generators to generators.
    To prove the claim, we use the fact that every element of $\ker(\pi_{v'})$ lifts to an element of $\ker(\pi)$. Let $\{\phi_j\}_j$ be a finite set of generators for $\ker(\pi)$ lifted from $\ker(\pi_v)$. It suffices to find a decomposition $\phi_j=\psi_j+\tau_j$, with the following properties: for any $\a'\approx\a$, $\psi_j$ is $u'$-homogeneous, and $u'(\tau_j)>u'(\psi_j)=u(\phi_j)$. This will prove that $\ker(\pi_{v'})$ is generated by the images of $\psi_j$ in $\gr_{u'}k[x_1,\dots,x_m]\cong k[x_1,\dots,x_m]$, and hence independent of $\a'$.
    
    The decomposition above is simply given by taking $\psi_j$ as the $u$-initial part of $\phi_j$, that is, $\psi_j$ is $u$-homogeneous, and $u(\tau_j)>u(\psi_j)=u(\phi_j)$. As $\a_1,\dots,\a_q$ are rationally independent, we can write $u(x_i)=\langle\a,\b_i\rangle$, $1\le i\le m$, for a unique $\b_i\in\Z^r$. It follows that there exists $\beta^{(j)}\in\Z^r$ such that $\sum_ir_i\b_i=\beta^{(j)}$ for all monomials $x^r$ appearing in $\psi_j$. This implies that $u'(x^r)=\langle\a',\b^{(j)}\rangle$ for all such monomials, so $\psi_j$ is $u'$-homogeneous. Moreover, if $\a'\approx\a$, then $\sum_{i=1}^mr_i\langle\a',\beta_i\rangle>\langle\a',\b^{(j)}\rangle$ for any monomial appearing in $\tau_j$, so $u'(\tau_j)>u'(\psi_j)=u'(\phi_j)$.
     
    As $\gr_{v'}R\cong\gr_vR$ is an integral domain, $v'$ is a valuation of finite type.
    We next show that $v'=v_{\a'}$, which will complete the proof. To prove the claim, we may assume that $\a'_1,\dots,\a'_q$ are rationally independent, since both $v'$ and $v_{\a'}$ depend continuously on $\a'$. Now, $\pi_v(x_i)\ne0$ implies $\pi_{v'}(x_i)\ne0$, and hence $v'(f_i)=u'(x_i)=v_{\a'}(f_i)$ for all $i$.
    Together with the fact that $\gr_{v'}R$ is generated by the images of the $f_i$, this implies that $v'(f)\le v_{\a'}(f)$ for all $f\in R$. For the converse direction, pick a lift $\phi\in k[x_1, \dots, x_m]$ of $\bar f\in \gr_{v'}R\simeq \gr_{v}R$, and write $\phi=\phi'+\tau$ where $\phi'$ is the $u'$-initial part. For each $f_i$, write $g_i\=c_{\beta_i}y^{\beta_i}$ for the monomial in the Laurent expansion so that $v(f_i)=\langle \alpha, \beta_i\rangle$. Then $v_{\a'}(f_i-g_i)> v'(f_i)=v_{\a'}(f_i)$. Since $\pi_{v}(\phi')\ne 0$, we must have $\phi'(g_1, \dots, g_m)\ne 0$.
    Similarly, $v_{\a'}(\pi(\tau))\ge v'(\pi(\tau))>v'(\pi(\phi'))$. This gives $v_{\a'}(f)= v_{\a'}(\phi'(g_1, \dots, g_m))\le v'(f)$.

\end{proof}

\begin{cor}\label{cor:fanocomp}
    Every polarized affine Calabi--Yau variety admits a Tian--Yau type compactification.
\end{cor}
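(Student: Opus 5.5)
The plan is to reduce to the divisorial case, Lemma~\ref{lem:divfg}, by perturbing $v_0$ using Lemma~\ref{lem:qmapprox}. Let $(X,\Omega,v_0)$ be a polarized affine Calabi--Yau variety. By~\cite[Corollary~6.27]{JW26}, $v_0$ is quasimonomial, say of rational rank $q$. By~\cite[Proposition~3.7]{JM12}, we can choose a log smooth model $(Y,E)$ with $v_0\in\QM_\eta(Y,E)$ for some codimension $q$ point $\eta$. Lemma~\ref{lem:qmapprox} then gives an isomorphism $\gr_wR\cong\gr_{v_0}R$, sending generators to generators, for every $w\in\QM_\eta(Y,E)$ close enough to $v_0$. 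Pick such a $w$ with rational weights $\alpha'\approx\alpha$. Then $w$ is divisorial, finitely generated, and negative, since $w(f_i)=u'(x_i)<0$ on all generators $f_i$, hence on all of $R\setminus k$. Moreover $X_w\cong X_{v_0}$ as varieties (only the Reeb field changes), so $X_w$ is klt.

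Now apply Lemma~\ref{lem:divfg} with $D=0$ to the divisorial valuation $w$. Since $w$ is finitely generated with klt degeneration $X_w$, the lemma produces a compactification $\bar X=\Proj\bigoplus_m\cF^m_wR$. The boundary $E_w=\bar X\setminus X$ is a single prime divisor with $w=c\ord_{E_w}$, the pair $(\bar X,E_w)$ is plt, and $-(K_{\bar X}+E_w)$ is ample. The lemma also gives $-A_X(w)=A_{X_w}(\xi_w)>0$, so $A_X(E_w)<0$. This is exactly a Tian--Yau type compactification.

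The step needing the most care is the identification of $X_w$ with $X_{v_0}$ as klt varieties. Lemma~\ref{lem:qmapprox} gives an isomorphism of rings $\gr_wR\cong\gr_{v_0}R$, and klt-ness is a property of the underlying variety, independent of the grading. So once we know the isomorphism is a genuine $k$-algebra isomorphism, rather than merely an isomorphism of graded pieces, the argument goes through. The proof of Lemma~\ref{lem:qmapprox} shows that both rings are the quotient of $k[x_1,\dots,x_m]$ by the same ideal, so this holds.

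A second point to verify is that $w$ can be taken rational while staying in the neighborhood provided by the lemma. This is immediate because rational $\alpha'$ are dense.
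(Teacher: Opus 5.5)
Your proposal is correct and follows the paper's proof: you use Lemma~\ref{lem:qmapprox} to perturb $v_0$ within $\QM_\eta(Y,E)$ to a rational, hence divisorial, $w$ with $\gr_wR\cong\gr_{v_0}R$, so that $X_w$ is klt, and then apply Lemma~\ref{lem:divfg} with $D=0$. You also check negativity of $w$, which the paper leaves implicit; that check should rest on $w$ being the residue norm of the negative monomial valuation $u'$ (so $\gr_wR$ is positively graded), since negativity on a set of generators alone would not suffice.
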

\begin{proof}
    Pick $w$ a divisorial valuation as in the previous lemma with $\gr_wR\cong \gr_{v_0}R$. Then $w$ is finitely generated, and we may assume $w=\ord_E$ for some prime divisor $E$ after scaling, and $(\bar X, E)$ is given as in Lemma~\ref{lem:divfg}.
\end{proof}
We note that for $v_0$ quasimonomial but not divisorial, the infinity divisor only gives a (divisorial) valuation sufficiently close to $v_0$. 
\begin{lem}\label{lem:ldeq}
    Let $v_0$ be a finitely generated negative valuation on $X$. If $(X_{v_0}, \xi_{v_0})$ is a log Fano cone singularity then $A_X(v_0)<0$.
    In this case we have $-A_X(v_0)=A_{X_{v_0}}(\xi_{v_0})$.
\end{lem}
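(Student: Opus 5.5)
Our plan is to reduce to the divisorial case, where Lemma~\ref{lem:divfg} (with $D=0$) already gives $-A_X(v_0)=A_{X_{v_0}}(\xi_{v_0})>0$. Both sides of the desired identity are homogeneous of degree one under scaling, so we may normalize freely. The argument has three steps.

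\textbf{Step 1: divisorial approximations.} Choose a log smooth model $(Y,E)$ with $v_0\in\QM_\eta(Y,E)$ and $\eta$ of codimension $q=\ratrk(v_0)$, as in Lemma~\ref{lem:qmapprox}. Write $v_0=v_\alpha$. That lemma says that for every $\alpha'\in\R^q_+$ close to $\alpha$, the valuation $w=v_{\alpha'}$ satisfies $\gr_wR\cong\gr_{v_0}R$, and the isomorphism sends the images of the fixed generators $f_1,\dots,f_m$ to themselves. Under this identification, $X_w=X_{v_0}$ as varieties with the same torus $\T$ action. The Reeb field changes from $\xi_{v_0}$ to $\xi_w$, which is determined by the weights $w(f_i)=v_{\alpha'}(f_i)$. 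These weights depend linearly on $\alpha'$, so $\xi_w\to\xi_{v_0}$ in the Reeb cone as $\alpha'\to\alpha$. Since $X_{v_0}$ is klt, $X_w$ is klt for every such $w$. Taking $\alpha'$ rational gives divisorial $w$.

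\textbf{Step 2: the divisorial case.} For divisorial $w$ as above, Lemma~\ref{lem:divfg} gives $-A_X(w)=A_{X_w}(\xi_w)=A_{X_{v_0}}(\xi_w)$. In particular $A_X(w)<0$.

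\textbf{Step 3: passing to the limit.} On $\QM_\eta(Y,E)$, formula \eqref{equ:ldpair} gives $A_X(v_\alpha)=\sum_i\alpha_i(A_Y(E_i)-\ord_{E_i}(D))$, which is linear in $\alpha$. This is because $A_Y$ is linear on the quasimonomial simplex and $v\mapsto v(D)$ is linear there, $D$ having snc support compatible with $E$ after further blowup. On the other side, $\xi\mapsto A_{X_{v_0}}(\xi)$ is linear on the Reeb cone of the log Fano cone $X_{v_0}$. This is the standard fact that the log discrepancy of toric valuations $v_\xi$ is linear in $\xi$ for a klt $\T$-variety. Both sides of Step 2 are therefore linear functions of $\alpha'$ that agree on the rational points of a neighborhood of $\alpha$. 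By density they agree at $\alpha$, giving $-A_X(v_0)=A_{X_{v_0}}(\xi_{v_0})$. The right-hand side is positive because $X_{v_0}$ is klt, so $A_X(v_0)<0$.

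\textbf{Main obstacle.} The step I expect to need the most care is Step 3's claim that the two sides of Step 2 vary compatibly in $\alpha'$. Concretely, one must check that the identification $X_w\cong X_{v_0}$ from Lemma~\ref{lem:qmapprox} carries $\xi_w$ to the Reeb field whose pairing with the weight of $\bar f_i$ is $v_{\alpha'}(f_i)$, so that the map $\alpha'\mapsto\xi_w$ is linear. This is what justifies invoking linearity of $A_{X_{v_0}}$ on the Reeb cone. If exact linearity of $A_X$ on $\QM_\eta(Y,E)$ fails to be clean because $D$ is not snc with $E$, the fallback is continuity alone: $A_X$ is continuous on $\QM(Y,E)$ by \eqref{equ:ldpair}, and $A_{X_{v_0}}$ is continuous on the Reeb cone, which already suffices to pass to the limit.
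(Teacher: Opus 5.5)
Your proposal is correct and follows the paper's proof. Like the paper, you approximate $v_0$ within the same quasimonomial simplex by divisorial $v_{\alpha'}$ with $X_{v_{\alpha'}}\cong X_{v_0}$ via Lemma~\ref{lem:qmapprox} (applicable since finitely generated negative valuations are quasimonomial), apply Lemma~\ref{lem:divfg}, and pass to the limit using continuity of $A_X$ on $\QM(Y,E)$ and of $A_{X_{v_0}}$ on the Reeb cone; your derivation of the strict inequality $A_X(v_0)<0$ from the limiting identity and the klt condition, rather than from a limit of negative numbers, is the right way to conclude.
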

\begin{proof}
    If $v_0$ is divisorial, since $X_{v_0}$ is unchanged when scaling $v_0$, and the log discrepancies are both homogeneous of degree one with respect to the scaling, this is the content of Lemma~\ref{lem:divfg}.
    In general, in view of Lemma~\ref{lem:qmapprox}, $v_0$ can be approximated by a sequence of divisorial valuations $(v_i)$ in the same dual complex with $X_{v_0}\cong X_{v_i}$.  
    This yields $A_X(v_0)=\lim_iA_X(v_i)<0$ and $\xi_{v_i}\to \xi_{v_0}$ in the Reeb cone of $X_{v_0}$. Thus $-A_X(v_0)=-\lim_iA_X(v_i)=\lim_iA_{X_{v_0}}(\xi_{v_i})=A_X(\xi_{v_0})$.
\end{proof}


%
%
\subsection{Finite generation of negative valuations}
We now exploit the finite generation property of $v_0$ to get a more concrete compactification of $X$ when $v_0$ is quasimonomial. The main goal is to prove a finite generation criterion, analogous to the one obtained for klt singularities in~\cite{XZstab}. We first introduce some terminology, which are analogs of models of qdlt Fano type and Koll\'ar models over a klt singularity. 
\begin{defi}
    A pair $(\bar X, E)$ is a \emph{(q)dlt Fano type model} of $X$ if $U\=\bar X\setminus E$ is isomorphic to $X$ in codimension one,
    every irreducible component of $E$ is a negative valuation with negative log discrepancy, and that there exists an effective $\Q$-divisor $D$ on $\bar X$ such that $(\bar X, D)$ is (q)dlt, $\lfloor D\rfloor=E$ and $-(K_{\bar X}+D)$ is ample. 
    If further $(\bar X, E)$ dlt, $-(K_{\bar X}+E)$ ample, and $\bar X$ is a compactification of $X$, then we say $(\bar X, E)$ is a \emph{Koll\'ar compactification}.
\end{defi}
\begin{rmk}
    A Tian--Yau type compactification is a Koll\'ar compactification with $E$ irreducible. 
\end{rmk}

We now state the main theorem in this section, from which Theorem~A will follow.
\begin{thm}\label{thm:kltfg}
    Let $v$ be a negative valuation on $X$. The following are equivalent:
    \begin{enumerate}
        \item The associated graded $\gr_vR$ is finitely generated and $X_v=\Spec\gr_vR$ is klt;
        \item $v$ is a bounded negative valuation with $A_X(v)<0$ and there is a Tian--Yau type compactification $(\bar X, F)$ and a special $\Q$-complement $\Gamma$ of $(\bar X, F)$ with respect to some log smooth model such that $v$ is a monomial lc place of $(\bar X, F+\Gamma)$.
        \item There exists a Koll\'ar compactification $(\bar X, E)$ of $X$ such that $v\in \QM_\eta(\bar X, E)$, where $\eta$ is the generic point of the deepest stratum.
    \end{enumerate}
\end{thm}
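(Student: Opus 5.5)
I will prove the cycle of implications $(1)\Rightarrow(2)\Rightarrow(3)\Rightarrow(1)$, following the strategy of the higher rank finite generation theory in~\cite{LXZ, XZstab}. In this translation, divisors at infinity play the role of exceptional divisors over a klt singularity, and the Calabi--Yau form $\Omega$ replaces the relative canonical class.

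\textbf{$(1)\Rightarrow(2)$.} Since finitely generated norms are bounded and finitely generated valuations are quasimonomial, $v$ is bounded. By Lemma~\ref{lem:ldeq} we have $A_X(v)<0$. By Lemma~\ref{lem:qmapprox}, I choose a log smooth model $(Y,E)$ with $v\in\QM_\eta(Y,E)$. Every $w\in \QM_\eta(Y,E)$ near $v$ then has $\gr_wR\cong\gr_vR$, compatibly with generators. I pick such a divisorial $w=\ord_F$, and Lemma~\ref{lem:divfg} gives a Tian--Yau type compactification $(\bar X,F)$ realizing $w$.

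To produce $\Gamma$, I work on the common degeneration $X_v$. The cone $X_v$ carries a torus $\T$ whose Reeb cone contains both $\xi_v$ and $\xi_w$. Since $X_v$ is klt, the face of $\QM_\eta(Y,E)$ near $v$ corresponds to $\T$-invariant qm valuations, and these are lc places of a $\T$-invariant $\Q$-complement of the Fano base $F\cong\Proj\gr_wR$. Such a complement can be taken general in the linear system cut out by generators of degree exceeding the relevant weights, as in~\cite[Lemma~4.4]{LXZ}. I lift this complement to $\bar X$ via the Rees family. Inversion of adjunction along $F$ (plt) shows that $(\bar X,F+\Gamma)$ is lc with the whole simplex near $v$ as lc places. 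Specialness comes from choosing $\Gamma$ to contain a general ample $G$ pulled back to $Y$ that avoids the strata. Compatibility with generators, as in Lemma~\ref{lem:qmapprox}, makes the lifting well defined.

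\textbf{$(2)\Rightarrow(3)$.} This is the main step, modeled on~\cite[Theorem~4.2]{XZstab}. I take the log smooth model $(Y,E)$ on which $\Gamma$ is special. I shrink $E$ to the components whose minimal stratum carries $v$, so that $v$ has rational rank $r$ equal to the number of components. Then I run an MMP on $K_Y+E+(1-\e)\Gamma_Y+\text{(small ample part }G)$ that contracts every divisor over $\bar X$ other than $E_1,\dots,E_r$. Divisors not in $E$ have positive discrepancy for $(\bar X,F+\Gamma)$, and the specialness of $\Gamma$ supplies the ampleness needed to run the MMP without a relative base. The crucial point, and the expected obstacle, is that we cannot run an MMP over $\bar X$: the $E_i$ are themselves at infinity, and $F$ need not survive. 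Instead I use that $K_{\bar X}+\sum(1-A_X(E_i))E_i\sim_\Q 0$ globally, by the Calabi--Yau condition. This converts $-(K+E)$ into $\sum -A_X(E_i)E_i$ plus the complement, which is ample after the MMP and the anticanonical model step.

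The negativity lemma together with $A_X(E_i)<0$ should guarantee two things. First, the output $\bar X'$ has $\bar X'\setminus E=X$, since no divisor on $X$ is contracted and $X$ is affine with $E$ the full boundary. Second, $(\bar X',E)$ is dlt with $\eta$ a stratum, using the toroidal structure of $(Y,E)$ near $\eta$ and the fact that $G$ avoids the strata. Checking that $\bar X'$ is a genuine compactification of $X$, rather than a model isomorphic to it only in codimension one, would use that $X$ is affine: I would pass first through a qdlt Fano type model and then take the ample model of $-(K+E)$.

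\textbf{$(3)\Rightarrow(1)$.} Given a Koll\'ar compactification, I adapt the argument of~\cite{XZstab} that monomial lc places on a dlt Fano type model are finitely generated. Since $-(K_{\bar X}+E)\sim_\Q\sum -A_X(E_i)E_i$ is ample, $\cF_v^\la R$ is identified with sections of $\bar X$ twisted by $E$ with weights $\alpha$. I then use vanishing on the strata and induction via adjunction to the minimal stratum to get finite generation of $\gr_vR$ on $\bar X$, whence finite generation on $X$ by Lemma~\ref{lem:fgres}. For klt-ness, I degenerate: approximating $v$ by divisorial $w$ in the simplex, each $w$ gives by a blowup a Tian--Yau type situation. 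Lemma~\ref{lem:divfg} then shows $X_w\cong X_v$ is klt, provided the extracted divisor is a plt Koll\'ar-type component, which holds for rational points of a dlt stratum near $v$.
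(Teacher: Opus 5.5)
\textbf{Main gap: (2)$\Rightarrow$(3).} You correctly identify this as the main step, but your MMP is not set up so that anything forces the right contractions. Three ingredients are missing.

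First, shrinking $E$ to the components through $c_Y(v)$ does not produce $\ratrk(v)$ divisors that are lc places. The rational rank can be smaller than the codimension of the stratum: weights $(1,1)$ at a codimension-two stratum give a divisorial $v$. Moreover, $\lc(\Gamma;Y,E)$ is in general a proper subset of $\QM_\eta(Y,E)$ containing $v$, so the $E_j$ themselves need not be lc places. One must extract, by a toroidal morphism $Z\to Y$, divisorial lc places $F_1,\dots,F_r$ spanning a simplex through $v$, chosen to generate the lattice so the output is dlt rather than qdlt. One must then re-establish specialness on $Z$, where the pullback of the ample part is no longer ample.

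Second, your claim that every divisor other than the chosen components has positive log discrepancy for $(\bar X,F+\Gamma)$ is false in general. $F$ itself, with coefficient one, is an lc place, and there may be others off $\QM(Y,E)$. This is why the paper tie-breaks (Lemma~\ref{lem:tiebreak}). That step produces $K_Z+\Delta_Z+\Gamma_Z\sim_\Q 0$ with $\Gamma_Z\ge G$, $G$ ample avoiding the strata, and $\lc(Z,\Delta_Z+\Gamma_Z)=\QM(Z,F)$ exactly.

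Third, what replaces the relative MMP is not $K_{\bar X}+\sum(1-A_X(E_i))E_i\sim_\Q 0$. That relation is only used at the very end: it shows $-(K+E)\sim_\Q\sum -A_X(E_i)E_i$ is ample and supported on $E$, so the complement of $E$ in the ample model is affine, hence equal to $X$. The real mechanism is the global triviality coming from $\Gamma$ being a complement. After a small perturbation $\Gamma_Z'$ of $\Gamma_Z$, write $K_Z+\Delta_Z+\Gamma_Z'=K_Z+\Delta_1-\Delta_2$. Then $\supp\Delta_2\subseteq M$, where $M$ is the sum of divisors exceptional over the compactification with boundary $F_1$, other than the $F_i$. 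Hence $K_Z+\Delta_1+\varepsilon M\sim_\Q \Delta_2+\varepsilon M$ is effective and exceptional. An MMP with scaling (after a dlt modification), together with the negativity lemma for movable divisors, then contracts exactly $M$ while keeping the strata of $F$. Your divisor $K_Y+E+(1-\varepsilon)\Gamma_Y+G$ has no such description, so ``the negativity lemma should guarantee'' has nothing to act on.

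\textbf{(1)$\Rightarrow$(2): a different route that is incomplete as written.} The paper applies Birkar's boundedness of complements to the Tian--Yau compactifications $\bar X_w$ for all divisorial $w$ near $v$, to get a uniform $N$. It then uses the finitely-many-monomials argument of \cite[Lemma~4.3]{LXZ}. Your route, via a $\T$-invariant complement $\Gamma_C=\frac1N\{\bar g=0\}$ on the cone lifted to $\bar X$, can be made to work, but inversion of adjunction only gives lc-ness near $F$.
\begin{itemize}
\item That the simplex near $v$ consists of lc places requires the lift $g$ to be a polynomial in the generators, so that $v'(g)=-v_{\xi_{v'}}(\bar g)$ for $v'$ near $v$. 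Then $A_{\bar X,F+\Gamma}(v')=A_X(v')-\frac1N v'(g)=-A_{X_v,\Gamma_C}(\xi_{v'})=0$ by Lemma~\ref{lem:ldeq}.
\item Lc-ness over $X$ needs a separate argument, e.g.\ $\G_m$-equivariance of the Rees family.
\item Specialness must be built in from the start, by taking $\Gamma_C$ to be a complement of $(X_v,\delta D_v)$. A complement cannot afterwards be enlarged to contain $G$.
\end{itemize}

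\textbf{(3)$\Rightarrow$(1): the klt step is unjustified.} Extracting a rational $w$ of the simplex does not give a Tian--Yau compactification, because the strict transforms of the $E_i$ stay at infinity. So Lemma~\ref{lem:divfg} does not apply. The klt-ness of $X_v$ is exactly what the paper takes from the theory of qdlt Fano type models of $(\bar X,\sum_i(1+\varepsilon A_X(E_i))E_i)$ in \cite{XuKstab}.
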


We first note that (3)$\Rightarrow$(1) already follows from the global finite generation property, as explored in~\cite{LXZ, XuKstab}.
\begin{lem}\label{lem:kolmodelfg}
    Let $v$ be a negative quasimonomial valuation. Then $v$ is finitely generated with $X_v=\Spec\gr_vR$ having klt singularities if there exists a Koll\'ar compactification $(\bar X, E)$ such that $v\in \QM(\bar X, E)$.
\end{lem}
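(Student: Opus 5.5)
We plan to reduce to the global finite generation theorem for lc places of complements on log Fano type varieties, as in \cite{LXZ, XuKstab}, and then transfer the result to $X$ via Lemma~\ref{lem:fgres}.

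\textbf{Step 1: a complement with the full simplex as lc places.} Since $(\bar X, E)$ is a Koll\'ar compactification, $-(K_{\bar X}+E)$ is ample. Choose a general effective $\Q$-divisor $\Gamma\sim_\Q -(K_{\bar X}+E)$, for instance $\frac1N$ times a general member of a very ample multiple. Then $(\bar X, E+\Gamma)$ is dlt and log Calabi--Yau, $\Gamma$ contains no stratum of $E$, and every valuation in $\QM(\bar X,E)$ is an lc place of $(\bar X, E+\Gamma)$. Since $(\bar X,E)$ is dlt with ample anticanonical class, $\bar X$ is of Fano type. The global finite generation theorem then applies to the quasimonomial lc place $v\in\QM(\bar X,E)$ of the complement $E+\Gamma$. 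It shows that the filtration induced by $v$ on the section ring $\bigoplus_m H^0(\bar X, -mr(K_{\bar X}+E))$ is finitely generated, and that the degeneration is a log Fano (cone) degeneration.

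\textbf{Step 2: pass from $\bar X$ to $X$.} Every component $E_i$ of $E$ is a negative divisorial valuation with $A_X(E_i)<0$. The relation $K_{\bar X}+\sum_i(1-A_X(E_i))E_i=0$ (via $\Omega$) gives $-(K_{\bar X}+E)\sim_\Q \sum_i(-A_X(E_i))E_i=:D$. This $D$ is strictly effective, supported exactly on $\partial\bar X=E$, and ample. So $\bar X$ together with $D$ is a polarized compactification to which Lemma~\ref{lem:fgres} applies: finite generation of $v$ on $\bar X$ gives finite generation of $\gr_vR$. Concretely, $\gr_vR$ is the degree-zero-in-$t$ part (the dehomogenization with respect to the section $s_D$) of the finitely generated bigraded algebra from Step 1.

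\textbf{Step 3: the klt property of $X_v$.} This is the main obstacle. The global theory shows that the degeneration of $(\bar X, E+\Gamma)$ induced by $v$ is a log Calabi--Yau pair whose central fiber is slc (in fact klt away from the boundary) of Fano type. We need klt at the cone point of $X_v=\Spec\gr_vR$.

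First approach: use Lemma~\ref{lem:qmapprox} to approximate $v$ by divisorial $w=c\,\ord_G\in\QM_\eta(\bar X,E)$ with $\gr_wR\cong\gr_vR$. Then it suffices to show $X_w$ is klt. For this we want Lemma~\ref{lem:divfg}, that is, a plt compactification $(\bar X', G)$ with $-(K_{\bar X'}+G)$ ample. To produce it, extract $G$ over $\bar X$, which is possible since $G$ is an lc place of the dlt pair $(\bar X,E+\Gamma)$ and $\bar X$ is Fano type. Then run an MMP contracting the strict transforms of the $E_i$, treating all divisors at infinity other than $G$ as exceptional, to reach a model where $G$ is the only divisor at infinity. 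The plt property should follow because $G$ is the unique lc place of a suitably perturbed complement, as in the Koll\'ar-component arguments of \cite{LXZ, XZstab}.

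Alternatively, apply inversion of adjunction on the Rees degeneration as in the proof of Lemma~\ref{lem:divfg}. The test configuration of $\bar X$ induced by $w$ has central fiber $\Proj$ of the graded ring, which is Fano type with klt interior by the global theorem. Hence its affine chart $X_w$ is klt, provided its cone point corresponds to the divisor at infinity. Here $A_X(w)<0$, which follows from $A_X(E_i)<0$ and linearity of $A_X$ on $\QM(\bar X,E)$, ensures the correct sign: $A_{X_w}(\xi_w)=-A_X(w)>0$.
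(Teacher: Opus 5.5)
Steps~1--2 follow the paper's route: global finite generation on $\bar X$, then Lemma~\ref{lem:fgres}. Step~3, however, has a genuine gap, and the obstacle you identify there is misplaced.

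By your own Step~2, $\gr_vR$ is the dehomogenization at $\bar s_D$ of $\gr_v\bigoplus_m H^0(\bar X,mD)$. So $X_v$ is the open chart $\{\bar s_D\neq 0\}$ of the central fiber $\bar X_0$, that is, the complement of the degenerated boundary. The cone point is an interior point of this chart. For example, take $\A^n\subset\P^n$ with $v=\ord_H$ for the hyperplane $H$ at infinity: the degeneration is trivial and the cone point is the origin. Your proviso ``provided its cone point corresponds to the divisor at infinity'' is therefore backwards. No divisorial approximation or Tian--Yau model is needed, because klt-ness of $X_v$ is exactly klt-ness of $\bar X_0$ away from the degenerated boundary.

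What you have not justified is that klt-ness itself. It does not follow from $v$ being an lc place of the log Calabi--Yau pair $(\bar X,E+\Gamma)$ on a Fano type $\bar X$. That only yields a weakly special (lc) degeneration, and the higher rank finite generation theorem also needs specialness.

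The missing step, which is what the paper does, runs as follows. Set $\Delta_\varepsilon\coloneqq\sum_i(1+\varepsilon A_X(E_i))E_i$. Then $(\bar X,\Delta_\varepsilon)$ is a klt log Fano pair with $-(K_{\bar X}+\Delta_\varepsilon)=(1+\varepsilon)D$, the same polarization as in your Step~2. Moreover $(\bar X,E)$ is a qdlt Fano type model for it. Put differently, $E+\Gamma=\Delta_\varepsilon+(\varepsilon D+\Gamma)$ with $\varepsilon D+\Gamma\ge\Gamma$ ample and general, so it is a special complement having $v$ as a monomial lc place. Now \cite[Theorems~5.20 and~5.32]{XuKstab} give finite generation and klt-ness of the degeneration $(\bar X_0,\Delta_{\varepsilon,0})$. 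Since $\Delta_{\varepsilon,0}$ is supported on $\{\bar s_D=0\}$, restricting to the chart $X_v$ shows that $X_v$ is klt, cone point included. The claim at the end of your Step~1, correctly sourced in this form, already finishes the proof.

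Your first approach does not close the gap as written. By Lemma~\ref{lem:divfg}, a plt Tian--Yau compactification $(\bar X',G)$ with $G$ inducing $w$ exists if and only if $X_w$ is klt. So ``the plt property should follow'' restates the claim rather than arguing for it. Making it rigorous amounts to redoing the rank-one case of Theorem~\ref{thm:fanocomp} and the ample-model step in the proof of Theorem~\ref{thm:kltfg}. That again rests on the specialness of the complement, so it is a long detour around a step the global theorem delivers directly.
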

\begin{proof}
    If $v\in \QM(\bar X, E)$ for some Koll\'ar compactification $(\bar X, E)$, then $-(K_{\bar X}+E)\sim_\Q\sum_i(-A_X(E_i))E_i$ is ample, and so
    $(\bar X, \sum_i(1+\varepsilon A_X(E_i))E_i)$ is log Fano for any $0<\varepsilon\ll1$, and that $(\bar X, E)$ is in particular a qdlt Fano type model for $(\bar X, \sum_i(1+\varepsilon A_X(E_i))E_i)$ in the sense of~\cite[Definition~5.8]{XuKstab}, and hence by Theorem~5.20 and Theorem~5.32 in \emph{loc.cit.}, we have $v$ is finitely generated on $\bar X$ with the degeneration being klt. Restricting the degeneration to $X$, we have that $\gr_vR$ is also finitely generated by Lemma~\ref{lem:fgres} and $X_v$ is klt.
\end{proof}

To get the remaining implications, we follow closely to the strategy developed in~\cite{LXZ, XZstab}. 
We first show (1)$\Rightarrow$(2). 
\begin{lem}\label{lem:fgcomp}
    Let $v$ be a negative quasimonomial valuation. If $v$ is finitely generated with $X_v$ having klt singularities, then there exists a Tian--Yau type compactification $(\bar X, F)$ of $X$ such that $v$ is a monomial lc place of a special $\Q$-complement $\Gamma$ with respect to some log smooth model $(Y, E)$.
\end{lem}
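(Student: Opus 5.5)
The plan is to reduce to the divisorial case, Lemma~\ref{lem:divfg}, through the approximation Lemma~\ref{lem:qmapprox}, and then to build the complement on the degeneration $X_v$ and lift it back, following the strategy of~\cite[\S4]{LXZ}.

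First, choose a log smooth model $(Y,E)$ and a point $\eta$ with $v\in\QM_\eta(Y,E)$ of rational rank $q$. By Lemma~\ref{lem:qmapprox}, every $w\in\QM_\eta(Y,E)$ close to $v$ has $\gr_wR\cong\gr_vR$, with generators mapped to generators. Pick such a $w$ that is divisorial, and rescale so that $w=\ord_F$. Lemma~\ref{lem:divfg}, exactly as in Corollary~\ref{cor:fanocomp}, then gives the Tian--Yau type compactification
\[
\bar X=\Proj\bigoplus_m\cF^m_wR,\qquad K_{\bar X}+(1-A_X(F))F\sim_\Q0 .
\]
In this compactification $F\cong\Proj\gr_wR$ is the quotient of the log Fano cone $X_v\cong X_w$ by the $\G_m$ generated by $\xi_w$. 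The torus $\T$ of rank $q$ acting on $X_v$ contains both $\xi_v$ and $\xi_w$, and it descends to a torus of rank $q-1$ acting on the log Fano pair $(F,\Delta_F)$.

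Second, construct the complement on the central fibre. The aim is a $\T$-invariant $\Q$-complement $\Gamma_0$ of the log Fano cone $X_v$ whose lc places include every $v_\xi$ with $\xi$ in a neighbourhood of $\xi_v$ in the rational span of the cone. Equivalently, I want a complement of $(F,\Delta_F)$ whose lc places contain the toric valuations coming from the residual $\T/\langle\xi_w\rangle$-action. I would obtain it by taking the sum of the boundary divisors of a $\T$-equivariant toroidal compactification of the torus orbits. Concretely, take the closures of the $\T$-invariant divisors cut out by a $\T$-weight basis of the generators $\bar f_1,\dots,\bar f_m$, which generate $\gr_vR$. Add a small multiple of a general invariant member, with coefficients tuned so that the total is $\sim_\Q -A_X(F)F|_F$. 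Klt-ness of $X_v$ and the freedom in these coefficients make $(X_v,\Gamma_0)$ lc.

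Third, lift and verify. Lift each $\T$-homogeneous equation of $\Gamma_0$ to $f\in R$ with the same $v$-value, and hence the same $w$-value, and let $\Gamma$ be the closure in $\bar X$ of the resulting divisor. By construction $\Gamma\sim_\Q-(K_{\bar X}+F)$. Lc-ness of $(\bar X,F+\Gamma)$ I would deduce as in the proof of Lemma~\ref{lem:divfg}: the Rees family $\tilde X\to\A^1$ for $w$ has $X_v$ as central fibre with boundary $\Gamma_0$, so inversion of adjunction gives lc along the central fibre. By openness this extends to the total space, and~\cite[Lemma~3.1]{SingMMP} passes it to $(\bar X,F+\Gamma)$. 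The identity $A_{\bar X,F+\Gamma}(v)=0$ follows from $A_{X_v,\Gamma_0}(\xi_v)=0$ together with $-A_X(v)=A_{X_v}(\xi_v)$ (Lemma~\ref{lem:ldeq}) and $v(\Gamma)=v_{\xi_v}(\Gamma_0)$. Hence $v\in\lc(\Gamma;Y,E)$.

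The main obstacle is the specialness requirement: $p^{-1}_*\Gamma\ge G$ for an ample $G$ on a log smooth model whose support contains no stratum of $(Y,E)$. I would first try the LXZ perturbation. Write $\Gamma=(1-\e)\Gamma_1+\e\Gamma_2$, where $\Gamma_2$ is a general member of $|{-A_X(F)F}|_\Q$ pulled back to $Y$, and $\Gamma_1$ is built as above but with $\Gamma_0$ slightly enlarged so that $v$ remains an lc place. For this to work, the lc places in $\QM_\eta(Y,E)$ near $v$ must form a rational polytope stable under the perturbation; here the finite generation of $v$, through the lifting of generators in Lemma~\ref{lem:qmapprox}, has to be used carefully. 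If it fails, the fallback is to replace $Y$ by a further toroidal blowup, on which the negative part of the pulled-back $F$ becomes ample relative to the strata.
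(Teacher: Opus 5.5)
Your Step~3 lifting argument is a sensible way to get \emph{some} $\Q$-complement of $(\bar X,F)$ with $v$ as an lc place: the cone over $\bar X$ with respect to $F$, inversion of adjunction along the central fibre, the good $\G_m$-action pushing any non-lc locus through the vertex, then \cite[Lemma~3.1]{SingMMP}. But there is a genuine gap exactly where you flag it: specialness is never established.

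A divisor obtained by lifting a $\T$-invariant complement of $X_v$ is very non-general, and nothing makes its strict transform on $Y$ dominate an ample divisor. The proposed perturbation does not repair this, for two reasons.
\begin{enumerate}
\item For $\Gamma_2$ general in $|{-A_X(F)F}|_\Q$ we have $v(\Gamma_2)=0$. On the other hand $A_{\bar X,F}(v)>0$, since $(\bar X,F)$ is plt and $v\notin\R_{>0}\ord_F$ when $q\ge2$. So $v$ remains an lc place of $(\bar X,F+(1-\varepsilon)\Gamma_1+\varepsilon\Gamma_2)$ only if $v(\Gamma_1)=A_{\bar X,F}(v)/(1-\varepsilon)$. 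That means $A_{\bar X,F+\Gamma_1}(v)<0$, and you give no mechanism that keeps the mixture lc.
\item A general member pulled back from $\bar X$ is at best nef and big on $Y$, not ample, as soon as $Y\dashrightarrow\bar X$ contracts something.
\end{enumerate}
The toroidal-blowup fallback addresses neither point.

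The missing idea, which is how the paper proceeds, is to build the ample divisor on $Y$ into the boundary from the start.
\begin{itemize}
\item Take $\pi\colon Y\to\bar X'$ over a Tian--Yau type compactification, with a $\pi$-ample exceptional $H$ and a general $G\in|\pi^*F'+\varepsilon H|_\Q$. Let $D=\frac1m\{f=0\}$ be the restriction of $\pi_*G$ to $X$.
\item Include $f$ among the generators of $\gr_vR$. Then Lemma~\ref{lem:qmapprox} gives $(X_v,\delta D_v)\cong(X_w,\delta D_w)$ klt for $w$ near $v$.
\item Lemma~\ref{lem:divfg}, applied with boundary $\delta D$, produces complements $\Gamma_w$ of $(\bar X_w,\delta\bar D_w+F_w)$. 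The divisor $\Gamma_w+\delta\bar D_w$ is then automatically special, because $p^{-1}_*(\Gamma_w+\delta\bar D_w)\ge\delta G$.
\item What remains is to make $v$, not just $w$, an lc place. The paper uses Birkar's bounded $N$-complements with $N$ uniform in $w$, so that all $f_w$ lie in a fixed $\cF^M_vR$. It then applies the finitely-many-monomials argument of \cite[Lemma~4.3]{LXZ}.
\end{itemize}

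Your lifting could in principle replace that last step once combined with this trick. Indeed $D_v$ is $\T$-invariant, since $f_v$ is $\xi_v$-homogeneous and $\xi_v$ generates $\T$. But then Step~2 has to be done properly for $(X_v,\delta D_v)$, and as written it is not a proof: a sum of divisors of generators with tuned coefficients has no reason to be lc. You would need two things:
\begin{enumerate}
\item a genuine existence result for a $\T$-invariant complement $\Gamma_0$ with $v_{\xi_v}$ as an lc place;
\item linearity of $\xi\mapsto A_{X_v,\delta D_v+\Gamma_0}(v_\xi)$ on the Reeb cone, so that $v_{\xi_w}$ is an lc place as well.
\end{enumerate}
The second point is exactly what the class computation $\Gamma\sim_\Q-(K_{\bar X}+F+\delta\bar D)$ and your cone argument require.
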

\begin{proof}
    Let $(\bar X', F')$ be a Tian--Yau type compactification of $X$ together with a log smooth model $\pi\colon (Y, E)\to \bar X'$ dominating $\bar X'$ such that $v\in \QM_\eta(Y, E)$ for $\eta$ the generic point of some stratum $\bigcap_{j\in J}E_j$, and that the exceptional locus of $\pi$ supports a $\pi$-ample $\Q$-divisor $H$. Fix $0<\epsilon\ll 1$ such that $L\coloneqq \pi^*F'+\epsilon H$ is still ample, and let $G\in |L|_\Q$ be a general divisor whose support does not contain any stratum of $(Y, E)$. Let $\bar D=\pi_*G$. Then $\bar D\sim_\Q F'$ and therefore takes the form $\bar D=\frac 1m \{s=0\}$ for some $s\in H^0(\bar X', mF')$. Since $H^0(\bar X', mF')\cong \cF_{F'}^m R$ by construction, $\bar D$ is the closure of $D=\frac 1m\{f=0\}$ for some $f\in \cF_{F'}^mR$. Since $\gr_vR$ is finitely generated, we may choose a set of generators $f_0=f, f_1, \cdots, f_l\in R$ such that their restrictions $\bar f_0, \cdots, \bar f_l\in\gr_vR$ generate $\gr_vR$. Fix $0<\delta\ll1$ such that $(X, \delta D)$ and $(X_v, \delta D_v)$ remain klt. Then by Lemma~\ref{lem:qmapprox} there exists a neighborhood $U\subseteq \QM_\eta(Y, E)$ of $v$ such that for all $w\in U$, we have $(X_v, \delta D_v)\cong (X_w, \delta D_w)$ is klt. By Lemma~\ref{lem:divfg}, for each divisorial valuation $w\in U$, there is a Tian--Yau type compactification $(\bar X_w, \delta\bar D_w+F_w)$, where $\bar D_w$ is the closure of $D$ on $\bar X_w$, and a $\Q$-complement $\Gamma_w$ of $(\bar X_w, \delta \bar D_w+F_w)$ such that $w$ is a monomial lc place of $(\bar X_w, \delta \bar D_w+F_w+\Gamma_w)$. By~\cite{Birbdd}, we may further take $\Gamma_w$ to be an $N$-complement, where $N$ only depends on $\dim X$ and the coefficients of $(\bar X_w, \delta \bar D_w+F_w)$. In particular, as $w$ vary in $U$, we can take the same $N$ for different compactifications. We may now write $\Gamma_w=\frac 1N\{s_w=0\}$ for some $s_w\in H^0(\bar X_w, -N(K_{\bar X_w}+F_w+\delta \bar D_w))=H^0(\bar X_w, -NA_{X, \delta D}(F_w)F_w)$, and $\Gamma_w|_X=\frac 1N (f_w=0)$ for $f_w=s_w/s_{F_w}^{-NA_{X, \delta D}(F_w)}\in R$. With this notation, for any $v\in X^\val_\infty$, using $A_{X}(v)=A_{\bar X}(v)-(1-A_{X}(F))v(F)$,
    we have
    \begin{align*}
        0&\le A_{\bar X_w, F_w+\delta \bar D_w+\Gamma_w}(v)=A_{X, \delta D}(v)+(1-A_{X, \delta D}(F_w))v(F_w)-v(F_w)-\frac 1N v(s_w) \\
    &=A_{X, \delta D}(v)-\frac 1N v(f_w),
    \end{align*} 
    with equality when $v=w$. In particular, we may choose $M=M(v)\gg0$ so that $f_w\in \cF^M_vR$ for all $w\in U$. Upon choosing a basis diagonalizing $v$, we see that the value $v(\Gamma_w|_X)$ is only determined by finitely many functions in $\cF^M_vR$ and hence finitely many Laurent monomials at $\eta$.
    Arguing as in~\cite[Lemma 4.3]{LXZ} (see also~\cite[Lemma 4.3]{XZstab}), up to replacing $U$ by a smaller neighborhood $U'$, we may pick a divisorial valuation $w\in U'$ such that $v$ is an lc place of $(\bar X_w, F_w+\Gamma)$, where $\Gamma=\Gamma_w+\delta \bar D_w$ is the desired special $\Q$-complement. Indeed, note that if $i: \bar X'\dashrightarrow \bar X_w$ is the canonical rational map, then $i|_X$ is an isomorphism preserving $D$ and hence for $p=i\circ \pi: Y\dashrightarrow\bar X$, we have $p^{-1}_*\Gamma\ge \delta G$ as desired.       
\end{proof}

It remains to show (2)$\Rightarrow$~(3), that is, to construct a qdlt Fano type compactification for a monomial lc place of a special complement. We note that the major difference in our setting is that the special complements are only taken with respect to a rational map. As we shall see, we will use the Calabi--Yau condition crucially in our proof.
The next two results give an analog of~\cite[Theorem 3.14]{XZstab}, with the first lemma serving as a tie-breaking step.
\begin{lem}\label{lem:tiebreak}
    Let $(Y, E)$ be a log smooth model of $X$, and $\Gamma_{\bar X}$ be a special $\Q$-complement on a Tian--Yau type compactification $(\bar X, F_0)$ of $X$ with respect to $(Y, E)$. Let $F_1, F_2, \cdots, F_r$ be divisors over $Y$ such that $v_i=\ord_{F_i}$ span an $(r-1)$-simplex in $\QM(Y, E)$, and that they are common lc places of $(\bar X, F+\Gamma_{\bar X})$. Then there exists a toroidal model $\pi: Z\to Y$, a sub-boundary $\Delta_Z$ and $\Gamma_Z\ge 0$ such that 
    \begin{enumerate}
        \item $\pi$ extracts exactly the divisors $F_i$ which are exceptional over $Y$;
        \item $\Gamma_Z\ge G$ for some ample divisor $G$ on $Z$ whose support does not contain any stratum of $(Z, F=\sum_iF_i)$;
        \item $K_Z+\Delta_Z+\Gamma_Z\sim_\Q0$;
        \item $\QM(Z, F)=\mathrm{LC}(Z, \Delta_Z +\Gamma_{Z})$.
    \end{enumerate}
\end{lem}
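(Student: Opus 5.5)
The plan is to follow the tie-breaking construction of~\cite[Lemma~3.13]{XZstab} (cf.~\cite[Lemma~4.5]{LXZ}). The difference is that the complement lives on $\bar X$, which is reached from $Y$ only through the rational map $p$. We compensate for this with the Calabi--Yau identity $K_{\bar X}+(1-A_X(F_0))F_0=0$ (as Weil divisors defined by $\Omega$) together with Lemma~\ref{lem:ldpair}. These let us write everything on $Y$ in terms of $A_X$.

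\emph{Step 1: the sub-boundary on $Y$.} Take a common log resolution $W$ of $Y$ and $\bar X$, and let $\Gamma_W$ be the pullback of $\Gamma_{\bar X}$. Define $\Delta_Y$ on $Y$ by the crepant relation
\[
K_Y+\Delta_Y+\Gamma_Y\sim_\Q 0,
\]
where $\Gamma_Y\coloneqq p^{-1}_*\Gamma_{\bar X}$ is the birational transform and $\Delta_Y\coloneqq -\div_Y(\Omega)+(\text{coefficient corrections along divisors at infinity})$. Concretely, $\Delta_Y$ has coefficient $1-A_{\bar X,F_0+\Gamma_{\bar X}}(E)+\ord_E(\Gamma_Y)$ along each prime divisor $E$ of $Y$. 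Then $A_{Y,\Delta_Y+\Gamma_Y}=A_{\bar X,F_0+\Gamma_{\bar X}}$ on $\Xval$, since both pairs are crepant to the same $\Q$-trivial log pair on $W$. Hence $\mathrm{LC}(Y,\Delta_Y+\Gamma_Y)=\mathrm{LC}(\bar X,F_0+\Gamma_{\bar X})$. The Calabi--Yau condition is what makes this legitimate: $X$ is unchanged, so the only divisors where $Y$ and $\bar X$ differ lie at infinity (or over the singular locus of $X$, where both sides compute the same $A_X$). Effectivity of $\Delta_Y$ is not needed, which is why the statement asks only for a sub-boundary.

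\emph{Step 2: extract the $F_i$ torically.} Since $v_1,\dots,v_r$ span a simplex in $\QM(Y,E)$, a toroidal blowup of $(Y,E)$ subdividing the cone yields $\pi\colon Z\to Y$ that extracts exactly those $F_i$ exceptional over $Y$. We arrange that $F=\sum F_i$ is a toroidal stratum and that $-\sum a_i F_i$ (over exceptional $F_i$) is $\pi$-ample for suitable $a_i>0$. Set $\Delta_Z$ by crepant pullback, so that $K_Z+\Delta_Z+\pi^*\Gamma_Y\sim_\Q 0$. Since each $F_i$ is an lc place, it has coefficient $1$ in $\Delta_Z$.

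\emph{Step 3: tie-breaking.} On $Y$ we have $\Gamma_Y\ge G_Y$ with $G_Y$ ample and containing no stratum. We replace $\Gamma_Z\coloneqq \pi^*\Gamma_Y$ by $\pi^*(\Gamma_Y-\epsilon G_Y)+\epsilon G'$, where $G'\sim_\Q \pi^*G_Y-\sum \epsilon_i F_i$ is a general member of an ample class on $Z$. The compensating multiples of $F_i$ are absorbed into $\Delta_Z$, lowering the coefficients of those exceptional $F_i$ not meant to remain lc. This is the standard perturbation that makes $\mathrm{LC}$ exactly $\QM(Z,F)$ rather than a larger subcomplex. Conditions (1)--(3) are then immediate.

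\emph{Main obstacle.} The crux is (4), in particular the inclusion $\mathrm{LC}(Z,\Delta_Z+\Gamma_Z)\subseteq\QM(Z,F)$. A priori the lc places of $(\bar X,F_0+\Gamma_{\bar X})$ might include valuations that are not toroidal on $(Z,F)$, for instance valuations coming from divisors of $\Gamma_Y$ created by the rational map. I would first use the fact that $\Gamma_Z\ge G$, with $G$ general and ample, to show that lc places must lie in $\QM(Z,\mathrm{Supp}\,\Delta_Z^{=1})$. Then, choosing the $\epsilon_i$ generically (the convexity argument of~\cite{XZstab}), I would cut the coefficient-one part of $\Delta_Z$ down to exactly $F$. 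Controlling the components of $\Delta_Z$ with coefficient $\ge 1$ that come from the indeterminacy of $p$ is where I expect the real work. Here I would use $A_X(E_j)$ and the relation $A_X=A_{\bar X}-(1-A_X(F_0))v(F_0)$ to check that such components have coefficient $<1$ unless they lie among the $F_i$.
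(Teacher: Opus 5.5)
There is a genuine gap, located at the step you flag as the crux; as written, your Step~3 actually breaks sub-lc-ness. Your $\epsilon_i$ are positive, as they must be for $-\sum\epsilon_iF_i$ over the exceptional $F_i$ to be $\pi$-ample. So rewriting $\epsilon\pi^*G_Y\sim_\Q\epsilon G'+\epsilon\sum_i\epsilon_iF_i$ \emph{raises} the $F_i$-coefficients of the boundary. Nothing compensates. The center of each $F_i$ on $Y$ is a stratum of $E$, which $G_Y$ avoids, so $\ord_{F_i}(\pi^*G_Y)=0$ and subtracting $\epsilon\pi^*G_Y$ does not lower them. The $F_i$ then get coefficient $1+\epsilon\epsilon_i>1$. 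Note also that every $F_i$ must stay an lc place, so there are no ``$F_i$ not meant to remain lc''.

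The paper takes the room instead from $D\coloneqq\Gamma_Y-G_Y\ge0$, which carries all of the $F_i$-order of $\Gamma_Y$. Set $\tilde F\coloneqq\sum_iA_{Y,\Delta_Y}(F_i)F_i$; then $-\tilde F$ is $\pi$-ample by \cite[Lemma~3.16]{XZstab}. The paper writes
\[
\pi^*\Gamma_Y=(1-\varepsilon)\pi^*D+\varepsilon\tilde F+\bigl(\pi^*(G_Y+\varepsilon D)-\varepsilon\tilde F\bigr),
\]
and replaces the ample bracket by a general member $G$. The $F_i$-coefficient stays exactly $1$ because $\ord_{F_i}(\pi^*D)=A_{Y,\Delta_Y}(F_i)$.

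This fix needs $(Y,\Delta_Y)$ to be sub-klt, and your splitting in Step~1 destroys that. You put everything except $p^{-1}_*\Gamma_{\bar X}$ into $\Delta_Y$. Then every prime divisor of $Y$ that is an lc place of $(\bar X,F_0+\Gamma_{\bar X})$ gets coefficient exactly $1$ in $\Delta_Y$, including ones not among the $F_i$. Examples are the transform of $F_0$ if it lies on $Y$, or components of $E$ exceptional over $\bar X$ that are lc places. So the check you propose at the end (coefficient $<1$ unless among the $F_i$) is false. Likewise, $F_i$ need not have coefficient $1$ in $\Delta_Z$; it only does in $\Delta_Z+\pi^*\Gamma_Y$. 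Your coefficient formula also has a sign slip: it should read $1-A_{\bar X,F_0+\Gamma_{\bar X}}(E)-\ord_E(\Gamma_Y)$.

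The missing idea is a different split. With $p\colon W\to Y$ and $q\colon W\to\bar X$ from your common resolution, set $K_Y+\Delta_Y\coloneqq p_*q^*K_{\bar X}$, i.e.\ $\Delta_Y=-p_*K_{W/\bar X}$. Its coefficients are $1-A_{\bar X}(E_j)<1$ because $\bar X$ is klt. Put all of $p_*q^*(F_0+\Gamma_{\bar X})\ge p^{-1}_*\Gamma_{\bar X}\ge G_Y$ into $\Gamma_Y$. Then:
\begin{enumerate}
\item $K_Z+\Delta_Z+\tilde F=K_Z+(\pi^{-1}_*\Delta_Y\vee F)$, with $\vee$ the coefficientwise maximum, has lc places exactly $\QM(Z,F)$.
\item $(Z,\Delta_Z+\pi^*D)$ is sub-lc with $\QM(Z,F)$ among its lc places.
\item Log discrepancies are affine in the boundary, so the convex combination $(Z,\Delta_Z+(1-\varepsilon)\pi^*D+\varepsilon\tilde F)$ has lc places exactly the intersection, namely $\QM(Z,F)$.
\item Adding a general $G$ changes nothing.
\end{enumerate}
This gives (4) directly, with no analysis of the indeterminacy of $p$ and no use of the Calabi--Yau identity at this stage.
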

\begin{proof}
    Pick a log smooth model $W$ dominating both $Y$ and $\bar X$. Denote by $p: W\to Y$ and $q: W\to \bar X$ the projections. Define $\Gamma_Y=p_*q^*(F_0+\Gamma_{\bar X})$ and let $\Delta_Y$ be such that $K_Y+\Delta_Y=p_*q^*K_{\bar X}$. Then $(Y, \Delta_Y)$ is sub-klt, with $\lfloor \Delta_Y\rfloor\le 0$ is a sub-boundary, with support contained in $E$ and the exceptional locus over $X$. By~\cite[Lemma 3.16]{XZstab}, there is a birational toroidal morphism $\pi: Z\to Y$ extracting exactly $F_1, \cdots, F_r$ which are exceptional over $Y$ such that $-\sum_{i=1}^rA_{Y, \Delta_Y}(F_i)F_i$ is $\pi$-ample. Let $\Gamma_1\coloneqq \pi^*\Gamma_Y$ and let $K_Z+\Delta_Z = \pi^*(K_Y+\Delta_Y)$ be the crepant pullback. By construction, we have $K_Y+\Delta_Y+\Gamma_Y\sim_\Q 0 \sim_\Q K_Z+\Delta_Z+\Gamma_1\sim_\Q K_{\bar X}+F_0+\Gamma_{\bar X}$. It remains to find $\Gamma_Z\sim_\Q\Gamma_1$ satisfying the desired properties. Since $\Gamma_{\bar X}$ is a special complement with respect to $(Y, E)$, there exists an ample divisor $G_Y$ whose support does not contain any stratum of $E$ such that $D\coloneqq \Gamma_Y-G_Y\ge 0$.    
    Then the same tie-breaking procedure as in~\cite[Lemma 3.17]{XZstab} produces $\Gamma_Z\sim_\Q \Gamma_1$ such that $\Gamma_Z\ge G$ for some $G$ ample and not containing any stratum of $(Z, F)$, and $\QM(Z, F)=\mathrm{LC}(Z, \Delta_Z+\Gamma_Z)$. 

    More precisely, set $\tilde F=\sum_{i=1}^rA_{Y, \Delta_Y}(F_i)F_i$, and choose $0<\varepsilon \ll 1$ such that both $\frac 12 G_Y+\varepsilon D$ and $\frac 12\pi^*G_Y-\varepsilon \tilde F$ are ample. This implies $\pi^*(G_Y+\varepsilon D)-\varepsilon \tilde F$ is ample. Further, we may break $\Gamma_1$ as
    \[
    \Gamma_1=(1-\varepsilon)\pi^*D+\varepsilon\tilde F+(\pi^*(G_Y+\varepsilon D)-\varepsilon \tilde F).
    \]
    We claim that
    \[
    \lc(Z, \Delta_Z+(1-\varepsilon)\pi^*D+\varepsilon\tilde F)=\QM(Z, F).
    \]
    Granting the claim, we may pick $G\sim_\Q\pi^*(G_Y+\varepsilon D)-\varepsilon \tilde F$ general such that 
    \[
    \lc(Z, \Delta_Z+(1-\varepsilon)\pi^*D+\varepsilon\tilde F+G)=\QM(Z, F),
    \]
    and in particular $G$ does not contain any stratum of $(Z, F)$. We may now take $\Gamma_Z = (1-\varepsilon)\pi^*D+\varepsilon\tilde F+G$.

    To verify the claim, note that $(Z, \Delta_Z+(1-\varepsilon)\pi^*D+\varepsilon\tilde F)$ is a convex combination of $(Z, \Delta_Z+\pi^*D)$ and $(Z, \Delta_Z+\tilde F)$. The pair $(Z, \Delta_Z+\pi^*D)$ is sub-lc since $K_Z+\Delta_Z+\pi^*D = \pi^*(K_Y+\Delta_Y+D)$ and
    \[
    K_Y+\Delta_Y+D\le K_Y+\Delta_Y+\Gamma_Y.
    \]
    Moreover, by construction we have $\QM(Z, F)\subseteq \lc(Z, \Delta_Z+\pi^*D)$, since all $F_i$ are lc places of $(Y, \Delta_Y+D)$. For the paiar $(Z, \Delta_Z+\tilde F)$, note that 
    \[
    K_Z+\Delta_Z+\tilde F=\pi^*(K_Y+\Delta_Y)+\tilde F = K_Z+(\pi^{-1}_*\Delta_Y\vee \sum_{i=1}^rF_i),
    \]
    we have $(Z, \Delta_Z+\tilde F)$ is sub-lc with lc places given exactly by $\QM(Z, F)$. Combining properties of the two pairs, we have verified the claim. 
\end{proof}

\begin{thm}\label{thm:fanocomp}
    Let $v$ be a bounded negative quasimonomial valuation of rational rank $r$ with $A_X(v)<0$, which is a monomial lc place of a special complement $\Gamma$ of some Tian--Yau type compactification $(\bar X_0, F_0)$ with respect to a toroidal model $(Y', E')$. Then there is a qdlt Fano type model $(Y, E)$ of $X$ such that $v\in \QM(Y, E)$.
\end{thm}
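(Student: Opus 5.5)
We follow the strategy of \cite[Theorem~3.14]{XZstab}, with the divisors at infinity treated as the exceptional divisors there. First we reduce to the rational case. Since $v\in\QM_\eta(Y',E')$ is a monomial lc place of $(\bar X_0,F_0+\Gamma)$ of rational rank $r$, the lc places of $(\bar X_0,F_0+\Gamma)$ inside $\QM_\eta(Y',E')$ form a rational polyhedral cone. This cone contains $v$ in the relative interior of a face of dimension $r$. We can therefore pick divisorial valuations $v_i=\ord_{F_i}$, $1\le i\le r$, in that face, spanning an $(r-1)$-simplex (after projectivizing) that contains a multiple of $v$. Because $A_X$ is linear on $\QM_\eta(Y',E')$ and $A_X(v)<0$, we may take the $F_i$ close to $v$, so that $A_X(F_i)<0$ as well. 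By boundedness of $v$ and openness of negativity in the simplex, we may also arrange that each $\ord_{F_i}$ is a negative valuation, i.e.\ $\ord_{F_i}(f)<0$ for every $f\in R\setminus k$. We then apply Lemma~\ref{lem:tiebreak} to get a toroidal model $\pi\colon Z\to Y'$ with $\Delta_Z$, $\Gamma_Z$ satisfying (1)--(4). In particular $K_Z+\Delta_Z+\Gamma_Z\sim_\Q0$, $\Gamma_Z\ge G$ with $G$ ample and avoiding the strata of $(Z,F)$, and $\QM(Z,F)=\lc(Z,\Delta_Z+\Gamma_Z)$.

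Next we run an MMP on $Z$ to contract every divisor other than the $F_i$ that is not a component of the closure of $X$. These are the exceptional divisors over $X$ and the remaining boundary divisors at infinity. Write $\Delta_Z=\Delta_Z^+-\Delta_Z^-$. Choose $0<\e\ll1$ and consider the pair
\[
(Z,\ B_Z\coloneqq \Delta_Z^{+}\wedge(1-\e)+\textstyle\sum_iF_i+\e'\,\mathrm{Exc}+(1-\e)G'),
\]
where $G'\sim_\Q G$ is general and $\mathrm{Exc}$ is the sum of the divisors to be contracted. Using $K_Z+\Delta_Z+\Gamma_Z\sim_\Q0$, we get
\[
K_Z+B_Z\sim_\Q -\e G+(\text{effective divisor supported on }\mathrm{Exc})+\textstyle\sum_i c_iF_i .
\]
Here the Calabi--Yau condition enters. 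The divisor $\Delta_Z$ is determined by $\Omega$ via Lemma~\ref{lem:ldpair}, and its coefficient along a divisor $P$ equals $1-A_X(P)$. For divisors at infinity not equal to an $F_i$, the coefficient of $P$ in $\Gamma_Z+\Delta_Z$ is $1$ exactly when $P$ is an lc place, and such $P$ are excluded by (4). This gives the positivity needed along $\mathrm{Exc}$. We run a $-(K_Z+B_Z)$-MMP, or rather a $(K_Z+B_Z)$-MMP with scaling of an ample divisor. By \cite{BCHM} it terminates, since $G$ is big and the pair is klt away from $F$. The MMP contracts exactly the components of $\mathrm{Exc}$, by the negativity lemma, and produces $Y$ on which $-(K_Y+B_Y)$ is ample after a final ample model step. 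Since no lc centers are created, $\QM(Z,F)$ persists, and the strata of $F$ survive because $G$ contains none of them.

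Finally we verify the qdlt Fano type conditions on $(Y,E\coloneqq\sum_iF_i)$. First, $Y\setminus E$ agrees with $X$ in codimension one, since all other divisors were contracted. Second, each $E_i$ is a negative valuation with $A_X(E_i)<0$. Third, the pair $(Y,D)$ with $D=B_Y$ is qdlt with $\lfloor D\rfloor=E$ and $-(K_Y+D)$ ample. The qdlt property follows as in \cite[Lemma~3.17, Theorem~3.14]{XZstab} from (4): the only lc centers of $(Y,D)$ are the toroidal strata of $E$. Since $v\in\QM(Z,F)$ and these strata are preserved, we get $v\in\QM(Y,E)$.

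\textbf{Main obstacle.} The main difficulty is the MMP step, for two reasons. First, we do not run a relative MMP over a fixed $\bar X$, because the special complement lives only on a rational map. Second, $\Delta_Z$ may have negative coefficients. Making $B_Z$ effective while keeping the lc places equal to $\QM(Z,F)$, and guaranteeing that exactly $\mathrm{Exc}$ is contracted, requires identifying the coefficients of $\Delta_Z$ along divisors at infinity with $1-A_X$. This identification is where $\Omega$ is used.
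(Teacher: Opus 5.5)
\textbf{Verdict.} There is a genuine gap in the MMP step. Your choice of $F_1,\dots,F_r$ and your use of Lemma~\ref{lem:tiebreak} match the paper, but the MMP step, which carries the proof, fails on two counts: the divisor you run the MMP for has the wrong shape, and there is no reference model for the negativity lemma.

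\textbf{The divisor $K_Z+B_Z$.} You claim $K_Z+B_Z\sim_\Q-\varepsilon G+N+\sum_ic_iF_i$ with $N\ge0$ supported on $\mathrm{Exc}$. This is not justified, for two reasons.
\begin{enumerate}
\item Your $B_Z$ drops the components of $\Gamma_Z-G$ that meet $X$.
\item Along $P\subseteq\mathrm{Exc}$, condition (4) only gives the upper bound $(\Delta_Z+\Gamma_Z)(P)<1$. Effectivity of $N$ needs the lower bound $B_Z(P)\ge(\Delta_Z+\Gamma_Z-G)(P)$. Your choice $\min\{\Delta_Z^+(P),1-\varepsilon\}+\varepsilon'$ violates this as soon as $\Gamma_Z-G$ has coefficient $>\varepsilon'$ along some $P$ with $0\le\Delta_Z(P)\le1-\varepsilon$.
\end{enumerate}
Even after repairing these, the sign is wrong. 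With coefficient $1$ along each $F_i$ one gets $c_i=0$, since $F_i$ has coefficient $1$ in $\Delta_Z+\Gamma_Z$ and is not contained in $G$. So the model you aim for would satisfy $K_Y+B_Y\sim_\Q-\varepsilon\varphi_*G$, which is anti-big. Since push-forward preserves pseudo-effectivity, $K_Z+B_Z$ is not pseudo-effective. Hence a $(K_Z+B_Z)$-MMP with scaling ends in a Mori fibre space, not at that model. A $-(K_Z+B_Z)$-MMP, on the other hand, has no reason to contract exactly $\mathrm{Exc}$. Also, $(Z,B_Z)$ is not klt because the $F_i$ are reduced, so \cite{BCHM} does not apply as stated. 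Incidentally, the coefficient of $\Delta_Z$ along a divisor $P$ of $Y'$ is $1-A_{\bar X_0}(P)=1-A_X(P)-(1-A_X(F_0))\ord_P(F_0)$, not $1-A_X(P)$.

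\textbf{The missing reference model.} Suppose you run the MMP for an effective divisor supported on $\mathrm{Exc}$. Concluding that it ``contracts exactly $\mathrm{Exc}$ by the negativity lemma'' still requires a model over which every component of $\mathrm{Exc}$ is exceptional, and you never produce one. Compactifications of affine varieties carry nonzero nef effective divisors supported at infinity. For example, $F_0$ is ample on $\bar X_0$ and lies in your $\mathrm{Exc}$ unless it is one of the $F_i$.

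\textbf{How the paper proceeds.} This is why the paper also chooses the $F_i$ \emph{finitely generated}.
\begin{enumerate}
\item There is then a compactification $\bar X$ of $X$ whose only boundary divisor is $F_1$, e.g.\ $\Proj\bigoplus_m\cF^m_{F_1}R$. The map $i\colon Z\dashrightarrow\bar X$ is a birational contraction, and your $\mathrm{Exc}$ is exactly the set $M$ of $i$-exceptional divisors not in $F$.
\item The paper replaces $\Gamma_Z$ by $\Gamma_Z'=\Gamma_Z-cF-G+G'$ with $G'\sim_\Q G+cF$, which makes the pair sub-klt. It writes $K_Z+\Delta_Z+\Gamma_Z'=K_Z+\Delta_1-\Delta_2$ with $\supp\Delta_2\subseteq M$.
\item It runs a $(K_Z+\Delta_1+\varepsilon M)$-MMP. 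Here $K_Z+\Delta_1+\varepsilon M\sim_\Q\Delta_2+\varepsilon M$ is effective and $i$-exceptional, with no ample term. Negativity on a common resolution of the output and $\bar X$ forces the push-forward to vanish. So exactly $M$ is contracted, and the strata of $F$, not being contained in $M$, survive. When $(Z,\Delta_1)$ is not klt, this is done after a dlt modification, with an MMP with scaling and \cite[Lemma~3.3]{BirkarNeg}.
\item Ampleness is obtained afterwards, not from the MMP. Take $A$ ample on $Y$ and let $A'$ be its transform on $Z$. Set $\tilde\Gamma_Z=\Gamma_Z-G+\varepsilon A'+G_1$ with $G_1\sim_\Q G-\varepsilon A'$ general. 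Push forward crepantly and put $D=\varphi_*(\Delta_Z+\tilde\Gamma_Z)-\varepsilon A$. Then $D$ is effective with $\lfloor D\rfloor=E$ and $-(K_Y+D)\sim_\Q\varepsilon A$.
\end{enumerate}
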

\begin{proof}
    Pick $F_1, \dots, F_r\in \lc(\Gamma; Y', E')$ such that $\{\ord_{F_i}\}$ generate a $(r-1)$-dimensional simplex in $\lc(\Gamma; Y', E')$, and that they are all finitely generated negative valuations with negative log discrepancy.
    Let $(Z, \Delta_Z+\Gamma_Z)$ be the toroidal model as constructed in the previous lemma, and denote by $i: Z\dashrightarrow (\bar X, F_1)$ the canonical birational contraction. 
    We aim to find a birational contraction $\varphi: Z\dashrightarrow Y$ such that 
    \begin{enumerate}
        \item $\varphi$ is an isomorphism at the generic point of every stratum of $(Z, F)$;
        \item $\varphi$ contracts exactly all exceptional divisors of $i$ that are not contained in $F$.
    \end{enumerate}
    Granted this, $(Y, E=\varphi_*F)$ will give the desired qdlt Fano type compactification of $X$. Indeed, let $A$ be an effective ample $\Q$-divisor on $Y$ that is in general position, and let $A'$ be the birational transform of $A$ on $Z$. Then $\supp(A')$ doesn't contain any stratum of $F$, and hence does not contain any lc centers of $(Z, \Delta_Z+\Gamma_Z)$. Thus for $0<\varepsilon\ll 1$ rational, $G-\varepsilon A'$ is still ample, and $(Z, \Delta_Z+\Gamma_Z-G+\varepsilon A')$ is still sub-qdlt, with $\mathrm{LC}(Z, \Delta_Z+\Gamma_Z-G+\varepsilon A')=\QM(Z, F)$. By Bertini, we may choose a general effective $G_1\sim_\Q G-\varepsilon A'$ such that $(Z, \Delta_Z+\tilde\Gamma_Z)$ is sub-qdlt for 
    \[
    \tilde\Gamma_Z\coloneqq \Gamma_Z-G+\varepsilon A'+G_1,
    \]
    and $\mathrm{LC}(Z, \Delta_Z+\tilde\Gamma_Z)=\QM(Z, F)$. By construction, we also have $K_Z+\Delta_Z+\tilde\Gamma_Z\sim_\Q 0\sim_\Q K_Y+\varphi_*(\Delta_Y+\tilde \Gamma_Z)$. By property~(2), $\varphi$ in particular contracts all divisors in $\Delta_Z+\tilde \Gamma_Z$ with nonpositive coefficients, and hence the birational transform $\varphi_*(\Delta_Z+\tilde\Gamma_Z)\ge 0$ on $Y$; and~(1) implies that $(Y, \varphi_*(\Delta_Z+\tilde\Gamma_Z))$ is qdlt with lc places given by $\QM(Y, E)$. Moreover, we have constructed $\tilde\Gamma_Z$ such that $\varphi_*\tilde\Gamma_Z\ge \varepsilon A$. Therefore, if we set $D\coloneqq \varphi_*(\Delta_Z+\tilde\Gamma_Z)-\varepsilon A$, then $(Y, D)$ is qdlt with $-(K_Y+D)\sim_\Q\varepsilon A$ ample, and $\lfloor D\rfloor =E$. This gives the qdlt Fano type model of $X$.
    
    It remains to find the contraction satisfying both conditions above. To this end, note that $(Z, \Delta_Z+\Gamma_Z)$ is sub-qdlt. For some fixed $0<c\ll 1$ for which $G+cF$ remains ample, we may find $0\le G'\sim_\Q G+cF$ not containing any stratum of $F$. Put $\Gamma_Z'\coloneqq \Gamma-cF-G+G'\sim_\Q\Gamma_Z$. Then $(Z, \Delta_Z+\Gamma_Z')$ is sub-klt. Let $M$ be the sum of exceptional divisors of $i$ which are not contained in $F$.
    Moreover, if we write
    \[
    0\sim_\Q K_Z+\Delta_Z+\Gamma_Z' = K_Z+\Delta_1-\Delta_2,
    \]
    where $\Delta_1, \Delta_2$ are both effective divisors with no common components, then $\supp(\Delta_2)\subseteq M$ since $F_i$ are all lc places of $(Z, \Delta_Z+\Gamma_Z)$. We now aim to run a $(K_Z+\Delta_1+\varepsilon M)$-MMP $Z\dashrightarrow Y$ for $0<\varepsilon \ll1$ to contract all of $M$. 
    
    Indeed, arguing as in~\cite[Lemma 3.15]{XZstab}, if $(Z, \Delta_1)$ is klt, then pick $\varepsilon\ll 1$ such that $(Z, \Delta_1+\varepsilon M)$ remains klt. Since $\Delta_1\ge G$ is big, we can run the $(K_Z+\Delta_1+\varepsilon M)$-MMP $\varphi: Z\dashrightarrow Z'$ by~\cite[Theorem 1.2]{BCHM}. Since $K_Z+\Delta_1+\varepsilon M\sim_\Q \Delta_2+\varepsilon M$, and is supported on the exceptional locus of $i: Z\dashrightarrow \bar X$ by construction, we have $\varphi_*(K_Z+\Delta_1+\varepsilon M)$ is nef on $Y$ and in particular its pullback to a common resolution of $Y$ and $\bar X$ is nef over $\bar X$, and hence the MMP process will contract $M$ by the negativity lemma~\cite[Lemma 3.39]{KollarMori}.

    In general, after taking a dlt modification by~\cite{OX} of $(Z, \Delta_1)$, we get $\rho: W\to Z$ with exceptional divisors $P_i$ such that $(W, \rho^{-1}_*\Delta_1+\sum P_i)$ is dlt and $K_W+\rho^{-1}_*\Delta_1+\sum P_i$ is $\rho$-nef. By the negativity lemma, this implies $A_{Z, \Delta_1}(P_i)\le 0$ and $\rho$ is an isomorphism over the klt locus of $(Z, \Delta_1)$. In particular, since $(Z, F+M)$ is toroidal, and $\supp(\Delta_2)\subseteq M$, the generic point $\eta\notin \supp(\Delta_2)$ for any stratum of $F$, and by construction $(Z, \Delta_1)$ is klt at $\eta$. This shows $\rho$ is an isomorphism at the generic point of every stratum of $(Z, F)$. We now run an MMP on $W$ instead. Write
    \[
    0\sim_\Q\rho^*(K_Z+\Delta_Z+\Gamma_Z') = K_W+D_1-D_2,
    \]
    for $D_1, D_2$ both effective and having no common components. Let $\tilde M = \rho^{-1}_*(M)+\sum P_i$. Then $\supp(D_2)\subseteq \tilde M$. Moreover, since $(Z, \Delta_1-\Delta_2)$ is sub-klt, we have 
    \[
    D_1\le \rho_*^{-1}\Delta_1+(1-\delta) \sum P_i
    \]
    for $0<\delta \ll 1$. Note that $(W, \rho^{-1}_*\Delta_1+\sum P_i)$ is dlt, we have $(W, \rho^{-1}_*\Delta_1+(1-\delta)\sum P_i)$ is klt, and hence $(W, D_1)$ is klt, and so is $(W, D_1+\varepsilon\tilde M)$ for $0<\varepsilon\ll1$. Moreover, 
    \[
    K_W+D_1+\varepsilon \tilde M\sim_\Q D_2+\varepsilon \tilde M.
    \]
    Since $D_1+\varepsilon \tilde M$ may not be big anymore, we cannot directly apply the previous argument on $W$. Nevertheless, pick $H$ to be an ample divisor on $W$ in general position, and we claim that the $(K_W+D_1+\varepsilon \tilde M)$-MMP with scaling of $H$ will contract all of $\tilde M$ after finitely many steps. Indeed, if $\la_i$ are the scaling numbers and put $\la=\lim \la_i$, then by~\cite{BCHM} we can assume $\la=0$, and so
    after finitely many steps, we may assume that we get a contraction $\psi: W\dashrightarrow Y$ such that $D'\coloneqq\psi_*(K_W+D_1+\varepsilon\tilde M)$ is movable. The only divisors that are contracted by $\psi$ have support in $\tilde M$. Pick a common resolution $W'$ of $Y\dashrightarrow \bar X$ and denote by $p: W'\to Y$ and $q:W'\to \bar X$ the corresponding projection maps. Since $i: Z\dashrightarrow \bar X$ is a birational contraction, we have that $Y\dashrightarrow \bar X$ is also a birational contraction, since $F$ is not contained in $\tilde M$. This implies that $p^*D'$ has support in the exceptional locus of $q$. Further, since $D'$ is movable, it is nef on very general curves, and so is $p^*D'$. We then conclude that $D'=0$ by a more general version of the negativity lemma, see~\cite[Lemma 3.3]{BirkarNeg}. Let $\varphi: Z\dashrightarrow Y$ be the resulting contraction. Since $\tilde M$ does not contain any $\rho^{-1}_*F$ and $\rho$ is an isomorphism at the generic point of every stratum of $(Z, F)$, we have that $\varphi$ satisfies both~(1) and~(2).
\end{proof}



\begin{proof}[Proof of Theorem~\ref{thm:kltfg}]
    The implications (1)$\Rightarrow$(2) and (3)$\Rightarrow$(1) follow from Lemma~\ref{lem:fgcomp} and Lemma~\ref{lem:kolmodelfg}.
    For (2)$\Rightarrow$(3), the construction of a Koll\'ar compactification is achieved by choosing $F_1, F_2, \dots, F_r$ such that they generate the lattice structure on the log smooth model $\QM(Y, D)$ containing $v_0$. Then $(Z, F)$ constructed in Lemma~\ref{lem:tiebreak} is dlt at the generic points of stratum of $F$, and the resulting qdlt Fano type model $(Y', E'=\sum_iE_i')$ in Theorem~\ref{thm:fanocomp} we get is in fact dlt. Taking the ample model of $-(K_{Y'}+E')\sim_\Q -\sum_i A_X(E_i')E_i'\ge 0$ as in~\cite[Proposition~3.9]{XZstab} yields $\pi: (Y', E')\dashrightarrow (\bar X, E)$ such that $\pi$ is an isomorphism at the generic points of strata of $E$, and that $X'\=\bar X\setminus E$ is affine, since $-\sum_i A_X(E_i)E_i$ is ample by construction, and $A_X(E_i)<0$ for all $i$. Further, since $U\=Y'\setminus E'$ is isomorphic to $X$ in codimension one, we have $X$ and $X'$ are isomorphic in codimension one, and hence isomorphic. Thus $(\bar X, E)$ is the desired Koll\'ar compactification.
\end{proof}
%
%
\section{Relative volumes and valuative invariants}\label{sec:valinv}
In this section, we study relative volumes between two norms. Specifying to valuations, they define valuative invariants that will be later used to study K-stability.
%
%
\subsection{Bivariate distribution function}
We first refine the constructions of multivariate volumes in~\cite{JW26}. Consider a bounded negative norm $\chi_0$ and a bounded (but not necessarily negative) norm $\chi$ on $R$, and define 
$H=H_{\chi_0,\chi}\colon\R_+^\times\times\R\to[0,+\infty]$ by 
\[
  H(x,y)\=\limsup_\la\frac{n!}{\la^n} \dim (\cF_{\chi_0}^{\la x}R\cap\cF_\chi^{\la y}R).
\]
We call $H=H_{\chi_0,\chi}$ the \emph{bivariate distribution function} associated to $\chi_0$ and $\chi$.
\begin{prop}\label{prop:twonorms}
    The limsup defining $H(x,y)$ is a finite limit. Moreover:
    \begin{itemize}
    \item[(i)] $H(cx,cy)=c^nH(x,y)$ for $c>0$;
    \item[(ii)] $x\mapsto H(x,y)$ and $y\mapsto H(x,y)$ are increasing;
    \item[(iii)] there exists $\eta\in\R$ such that $H>0$ and $H^{1/n}$ is concave on the set $\{y>\eta x\}$, whereas $H=0$ on $\{y<\eta x\}$;
    \item[(iv)] $H(x,y)=x^n\vol(\chi_0)$ for $y\ge Cx$;
    \item[(v)] if $\chi$ is a bounded negative norm, then $H(x,y)=\vol(\frac{\chi_0}{x}\wedge\frac{\chi}{y})$ for $y>0$.
    \end{itemize}
    Here $C\ge1$ is any constant such that $C\chi_0\le\chi\le-C\chi_0$.
\end{prop}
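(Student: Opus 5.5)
The plan is to reduce everything to the Okounkov body (or geometric norm) machinery for volumes of bounded negative norms from~\cite{JW26}, working on a fixed compactification. Fix a geometric norm $\chi_1=\chi_{\bar X,D}$ with $D$ ample. Since $\chi_0$ is bounded negative, $\cF^{\la x}_{\chi_0}R\subseteq H^0(\bar X,\lceil C'\la x\rceil D)$ for a uniform constant $C'$. The family $V_{\la}:=\cF^{\la x}_{\chi_0}R\cap\cF^{\la y}_\chi R$ (for $\la\in\Z_{>0}$ after rescaling, or via $\R$-indexed approximation) is then a graded linear series. This uses the multiplicativity of the filtrations: $V_\la V_\mu\subseteq V_{\la+\mu}$, because both $\chi_0$ and $\chi$ are superadditive on products. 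Choose an admissible flag on $\bar X$ and form the Okounkov body $\Delta_{x,y}$ of this series. Provided the series contains an ample series, the Lazarsfeld--Musta\c{t}\u{a}/Kaveh--Khovanskii theorem shows that the limsup is a genuine limit, equal to $n!\vol(\Delta_{x,y})$. Finiteness follows from the containment in $H^0(\bar X,\lceil C'\la x\rceil D)$.

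The properties then follow as below.

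\begin{itemize}
\item[(i)] This is immediate from the definition by replacing $\la$ with $c\la$. This is legitimate once we know the limit exists along all real $\la$; that in turn follows from monotonicity in $\la$ together with the limit along integer multiples.
\item[(ii)] This is clear from monotonicity of the filtrations.
\item[(iv)] If $y\ge Cx$, then $\chi\ge C\chi_0$ gives $\cF^{\la x}_{\chi_0}\subseteq\cF^{C\la x}_\chi\subseteq \cF^{\la y}_\chi$. Hence $H(x,y)=x^n\vol(\chi_0)$.
\item[(v)] If $\chi$ is bounded negative and $y>0$, then $\cF^{\la}_{\chi_0/x\wedge\chi/y}=\cF^{\la x}_{\chi_0}\cap\cF^{\la y}_\chi$ by definition of $\wedge$. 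Applying the existence of the limit to the volume of the norm $\frac{\chi_0}{x}\wedge\frac{\chi}{y}$ gives the claim.
\end{itemize}

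For (iii), I would use the superadditivity $V_{\la(x,y)}\cdot V_{\mu(x',y')}\subseteq V_{\la (x,y)+\mu(x',y')}$. This gives a Minkowski-type inclusion $\Delta_{(x,y)}+\Delta_{(x',y')}\subseteq\Delta_{(x+x',y+y')}$, and Brunn--Minkowski then shows that $H^{1/n}$ is superadditive, hence concave by homogeneity (i), wherever the bodies are full-dimensional. Set $\eta:=\inf\{y/x: H(x,y)>0\}$, which by (i) depends only on the slope. Using $\chi\ge -C\chi_0$ on the negative side, $\eta\ge -C$, so $\eta$ is finite. By (iv), $\{y>\eta x\}$ is an open convex cone on which $H>0$, and $H=0$ below the line by (ii) and the definition of $\eta$.

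The main obstacle is justifying the Okounkov body limit: one needs the graded series $V_\bullet$ to contain an ample series (condition (C) of Lazarsfeld--Musta\c{t}\u{a}) whenever $y>\eta x$. I would first establish this for slopes $y/x>C$, where $V_\la=\cF^{\la x}_{\chi_0}R$ and the statement is the known existence of $\vol(\chi_0)$ from~\cite{JW26}. For a general slope above $\eta$, pick a point with $H>0$ and multiply by sections from the large-slope region. Positivity of the volume plus the product inclusion should give a series that contains, after a shift, an ample series, in the manner of the standard argument that a big graded linear series has an Okounkov body computing its volume. If this is delicate, the fallback is to define $H$ as the Okounkov-body volume directly, bound the limsup above by it, and get the matching lower bound via Fujita-type approximation by finitely generated subseries.
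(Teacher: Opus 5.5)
Your proposal follows the paper's proof. Both realize $H(x,y)$ as the volume of the Okounkov body of the graded family $V_m=\cF^{mx}_{\chi_0}R\cap\cF^{my}_\chi R\subseteq R(\bar X,D)$, set $\eta=\inf\{c\mid H(1,c)>0\}$, and get concavity of $H^{1/n}$ on $\{y>\eta x\}$ from $\Delta^{x,y}+\Delta^{x',y'}\subseteq\Delta^{x+x',y+y'}$ and Brunn--Minkowski; your sketch of condition (C), multiplying against the ample series from the region $y\ge Cx$, fills in a point the paper only cites from~\cite{JW26}. One small fix: $\la\mapsto V_\la$ is monotone only for $y\ge0$, because for $y<0$ the factor $\cF^{\la y}_\chi R$ shrinks as $\la$ grows, so for $y<0$ pass from integer to real $\la$ by sandwiching $V_\la$ between the families of slopes $(x,(1+\epsilon)y)$ and $((1+\epsilon)x,y)$ and using continuity of $H$ on $\{y>\eta x\}$; note also that (i) holds directly by substituting $\la\mapsto c\la$ in the real limsup, and that $\eta\ge-C$ comes from $\chi\le-C\chi_0$, not from $\chi\ge-C\chi_0$.
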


\begin{proof}
    The assertions in~(i),~(ii),~(iv) and~(v) are clear. We must prove that the limsup is a limit, and that~(iii) holds.
    
    As $H_{\chi_0, \chi}(x,y)=H_{c\chi_0, c\chi}(cx,cy)$ for $c>0$, we may assume there exists a geometric norm $\chi_{\bar{X},D}$ dominating $\chi_0$. As in the proof of~\cite[Proposition~7.1]{JW26}, it suffices to consider integers $\la=:m$. Then $H(x,y)$ is the volume of the Okounkov body $\Delta^{x,y}$ associated to the graded subalgebra $V_m\coloneqq\cF_{\chi_0}^{mx}R\cap\cF_\chi^{my}R$ of $R(\bar X,D)$ defined by a real rank $n$ valuation on $R$.
    
    Set $\eta=\inf\{c\in\R\mid H(1,c)>0\}$, and $\Omega:=\{y>\eta x\}$. Then $H>0$ on $\Omega$, and if $(x,y),(x',y')\in\Omega$, then $\Delta^{x,y}$ and  $\Delta^{x',y'}$ are nonempty, so $\Delta^{x,y}+\Delta^{x',y'}\subset\Delta^{x+x',y+y'}$, which by the 
     Brunn--Minkowski inequality implies $H^{1/n}$ is concave, and hence continuous, on $\Omega$.
\end{proof}

%
%
\subsection{Relative volumes}\label{sec:relvol}
Let $\chi_0$ be a bounded negative norm, and $\chi$ a bounded norm on $R$. Recall the notation $T(\chi_0; \chi)=\sup_{f\in R\setminus k}\frac{\chi(f)}{\chi_0(f)}\in [0, \infty)$.

For each $\la>0$ we can restrict $\chi$ to $\cF^\la_{\chi_0}R$.  
We define
\[
\vol_\la (\chi_0; \chi) = \frac{1}{\la N_\la}\sum_{i=1}^{N_\la} \chi(f_i),
\]
where $\{f_i\}$ is a $\chi$-orthogonal basis of $\cF^\la_{\chi_0}R$, and $N_\la=\dim_k\cF^\la_{\chi_0}R$.


\begin{lem}\label{lem:Sinv}
The limit $\vol(\chi_0;\chi)\=\lim_{\la\to \infty}\vol_\la(\chi_0; \chi)$ exists, and 
    \[
    \vol(\chi_0;\chi)= \frac{1}{\vol(\chi_0)}\int_{-\infty}^{T(\chi_0; \chi)} H(1,y)\, dy-T(\chi_0; \chi)
    \]
\end{lem}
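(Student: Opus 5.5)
We write the sum $\sum_i\chi(f_i)$ over a $\chi$-orthogonal basis of $V_\la\coloneqq\cF^\la_{\chi_0}R$ as a layer-cake integral against the jumping numbers of $\chi$ on $V_\la$. For a $\chi$-orthogonal basis $\{f_i\}$ of $V_\la$, the number of $i$ with $\chi(f_i)\ge -s$ equals $\dim(V_\la\cap\cF^s_\chi R)$; this is exactly where orthogonality is used. Set $T\coloneqq T(\chi_0;\chi)$. Every $f\in V_\la\setminus k$ satisfies $\chi(f)\ge T\chi_0(f)\ge -T\la$, so $V_\la\subseteq\cF^{T\la}_\chi R$; the constants also satisfy this. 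Hence all values $\chi(f_i)$ lie in $[-T\la,\infty)$, and they are bounded above as well since $\chi$ is bounded. Writing $\chi(f_i)=-T\la+\int_{-T\la}^{\infty}\mathbf 1\{\chi(f_i)\ge -s\}^{c}\,ds$ is awkward, so we use
\[
\chi(f_i)+T\la=\int_{-\infty}^{T\la}\mathbf 1\{f_i\in\cF^{s}_\chi R\}^{\mathrm{c}}\,ds
=\int_{-\infty}^{T\la}\bigl(1-\mathbf 1\{\chi(f_i)\ge -s\}\bigr)\,ds .
\]
This identity should be double-checked with signs. It will in fact be cleaner to use $-\chi(f_i)=\int_{-\infty}^{T\la}\mathbf 1\{\chi(f_i)\ge -s\}\,ds - T\la$, valid because $\chi(f_i)\ge -T\la$. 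Summing over $i$ and dividing by $\la N_\la$, we obtain, up to an overall sign convention that must match the statement,
\[
\frac{1}{\la N_\la}\sum_i(\text{value})=\frac{1}{\la N_\la}\int_{-\infty}^{T\la}\dim(\cF^\la_{\chi_0}R\cap\cF^s_\chi R)\,ds - T .
\]

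Next we substitute $s=\la y$. The right-hand side becomes
\[
\int_{-\infty}^{T}\frac{\dim(\cF^{\la}_{\chi_0}R\cap\cF^{\la y}_\chi R)}{N_\la}\,dy-T .
\]
By Proposition~\ref{prop:twonorms}, the numerator times $n!/\la^n$ converges to $H(1,y)$ for each $y$ (off the boundary line $y=\eta$ at least). Also $N_\la\, n!/\la^n\to\vol(\chi_0)>0$. So the integrand converges pointwise almost everywhere to $H(1,y)/\vol(\chi_0)$.

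To justify passing the limit inside the integral, we need domination on a fixed compact interval. The integrand is bounded by $1$, and it vanishes for $y<-C$, where $C\chi_0\le\chi\le -C\chi_0$. Indeed, if $y<-C$ and $f\in\cF^\la_{\chi_0}R$ is nonconstant, then $\chi(f)\le -C\chi_0(f)\le C\la$; this is the wrong direction, so we argue instead that $\cF^{\la y}_\chi R=0$ for $y<-C$. That holds once we note that $\chi\le -C\chi_0$ makes $\chi$ bounded above by $C\la$ on $V_\la$, which combined with the lower bound on the $s$-range gives integration over $[-C',T]$ only. Dominated convergence then gives existence of the limit and the formula, since $H(1,y)=0$ for $y<\eta$.

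The main obstacle is bookkeeping. We must get the sign convention for the layer-cake formula right, so that the constant $T$ appears with the correct sign given that norms take negative values. We must also check that the integrand genuinely vanishes below a fixed $y$, using the two-sided bound on $\chi$ together with $\cF^{\la y}$ being empty for sufficiently negative $y$. The boundary line $y=\eta$ is harmless, since it has measure zero.
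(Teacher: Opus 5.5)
Your argument is essentially the paper's. There, $\frac{n!}{m^{n+1}}\sum_i\chi(f_i)$ is written as the Stieltjes integral $\int_{-C}^{T}(-y)\,dH_m(1,y)$ and integrated by parts. After the substitution $s=\la y$ this is exactly your layer-cake identity, and the limit is then taken by dominated convergence using Proposition~\ref{prop:twonorms}. Your version also works with real $\la$ directly, whereas the paper's proof is written for integer $m$.

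The substance is right, but two pieces of your bookkeeping are wrong as written and should be replaced, not just ``double-checked''.

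First, your first identity has a divergent right-hand side, and your second has the wrong sign on the left. The correct pointwise identity is
\[
\chi(f_i)=\int_{-\infty}^{T\la}\mathbf 1\{\chi(f_i)\ge -s\}\,ds-T\la ,
\]
valid because $\chi(f_i)\ge -T\la$. Summing it and using $\#\{i:\chi(f_i)\ge -s\}=\dim(V_\la\cap\cF^s_\chi R)$ gives precisely your final display, with $\chi(f_i)$ as the ``value''. So there is no sign convention left to choose.

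Second, the inequality $\chi(f)\le -C\chi_0(f)\le C\la$ for $f\in V_\la$, which you dismissed as going in the wrong direction, is exactly what is needed. It gives $V_\la\cap\cF^{\la y}_\chi R=0$ for $y<-C$, so the integrand is dominated by $\mathbf 1_{[-C,T]}$. Your replacement claim, that $\cF^{\la y}_\chi R$ itself vanishes, is false in general. The norm $\chi$ is only assumed bounded, not negative, so it can take arbitrarily large positive values on $R$ (e.g.\ a valuation at infinity with $v(f)>0$ for some $f\in R$). Only the intersection with $V_\la$ vanishes.
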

\begin{proof}
  Pick $C$ such that $T(\chi_0; \chi)\chi_0\le \chi\le -C\chi_0$, and write $T\coloneqq T(\chi_0; \chi)$.
  For $m\in \Z_{>0}$, using $H_m(1,y)\coloneqq\frac{n!}{m^n}\dim\cF_{\chi_0}^{m}R\cap \cF_\chi^{my}R$ for $-C\le y<T$, we have
  \begin{align*}
      \tfrac{n!}{m^{n+1}}mN_m\vol_m(\chi_0; \chi) 
      &= \int_{-C}^T(-y)\,\frac{d}{dy}H_m(1,y)
      =-\tfrac{n!N_mT}{m^n}+\int_{-C}^TH_m(1,y)\,dy,
  \end{align*}
  where the second equality follows from integration by parts.
    For each $y$, by Proposition~\ref{prop:twonorms}, we have $\lim_{y\to\infty}H_m(1,y)=H(1,y)$, and we have $H_m(1, y)\le H_m(1,T)=\frac{n!}{m^n}N_m\to\vol(\chi_0)$, so by Dominated Convergence,
    \[
    \frac{n!}{m^{n+1}}mN_m\vol_m(\chi_0; \chi)
    \to\int_{-C}^TH(1,y)\,dy-T\vol(\chi_0),
    \]
    which completes the proof.
\end{proof}

\begin{lem}\label{lem:relvolcts}
    Consider a bounded negative norm $\chi_0$, and a sequence $\{\chi_i\}_i$ of bounded negative norms such that $d_\infty(\chi_i,\chi_0)\to0$. Then, for any bounded norm $\chi$, we have  
    \[
    T(\chi_0;\chi)=\lim_iT(\chi_i;\chi)
    \quad\text{and}\quad
    \vol(\chi_0;\chi)=\lim_{i\to \infty}\vol(\chi_i;\chi).
    \]
\end{lem}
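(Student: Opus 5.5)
The plan is to exploit the two-sided comparison $e^{-\delta_i}\chi_0\le\chi_i\le e^{\delta_i}\chi_0$, where $\delta_i\=\de_\infty(\chi_i,\chi_0)\to0$. Here the inequalities are between negative functions, so $\chi_0\le\chi_i/e^{-\delta_i}$, and so on. Everything will be reduced to monotonicity and homogeneity of $T$ and of the relative volume in the reference norm $\chi_0$, together with the formula of Lemma~\ref{lem:Sinv}.

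\emph{Step 1: $T$.} By Definition~\ref{defi:T}, $T(\chi_0;\chi)=\inf\{C\ge0\mid C\chi_0\le\chi\}$. This is homogeneous of degree $-1$ in $\chi_0$, i.e.\ $T(t\chi_0;\chi)=t^{-1}T(\chi_0;\chi)$. It is also monotone: if $\chi_0\le\chi_0'$ (both negative), then $C\chi_0\le C\chi_0'$, so $T(\chi_0;\chi)\le T(\chi_0';\chi)$. Writing $e^{\delta_i}\chi_0\le\chi_i\le e^{-\delta_i}\chi_0$ (recall $\chi_0<0$ off constants), we get
\[
e^{-\delta_i}T(\chi_0;\chi)\le T(\chi_i;\chi)\le e^{\delta_i}T(\chi_0;\chi).
\]
Letting $i\to\infty$ gives the first limit. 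This mirrors the computation in Lemma~\ref{lem:FScts}.

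\emph{Step 2: sandwiching the filtrations.} From the same inequalities we get $\cF^{\la e^{-\delta_i}}_{\chi_0}R\subseteq\cF^\la_{\chi_i}R\subseteq\cF^{\la e^{\delta_i}}_{\chi_0}R$. Intersecting with $\cF^{\la y}_\chi R$ and taking normalized dimensions yields
\[
H_{\chi_0,\chi}(e^{-\delta_i},y)\le H_{\chi_i,\chi}(1,y)\le H_{\chi_0,\chi}(e^{\delta_i},y)
\]
for every $y$. In particular, $e^{-n\delta_i}\vol(\chi_0)\le\vol(\chi_i)\le e^{n\delta_i}\vol(\chi_0)$, so $\vol(\chi_i)\to\vol(\chi_0)$.

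\emph{Step 3: passing to the limit in Lemma~\ref{lem:Sinv}.} We write
\[
\vol(\chi_i;\chi)=\frac{1}{\vol(\chi_i)}\int_{-C}^{T_i}H_{\chi_i,\chi}(1,y)\,dy-T_i,
\]
with $T_i=T(\chi_i;\chi)$ and with $C$ chosen uniformly. This is possible since $\chi\ge -C'\chi_0$ and $\chi_i\ge e^{\delta_i}\chi_0$, which gives $\chi\ge -C\chi_i$ for one $C$ and all large $i$. By Proposition~\ref{prop:twonorms}, $H_{\chi_0,\chi}$ is continuous on the open region $\{y>\eta x\}$, zero on $\{y<\eta x\}$, and homogeneous. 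Hence $H_{\chi_0,\chi}(e^{\pm\delta_i},y)=e^{\pm n\delta_i}H_{\chi_0,\chi}(1,e^{\mp\delta_i}y)\to H_{\chi_0,\chi}(1,y)$ for all $y\ne\eta$, i.e.\ almost everywhere. All integrands are bounded by $e^{n}\vol(\chi_0)$ on a bounded interval, since the upper limits $T_i$ converge by Step 1. Dominated convergence, applied to the squeeze in Step 2, then gives convergence of the integrals, and combining with $\vol(\chi_i)\to\vol(\chi_0)$ and $T_i\to T$ proves the second limit.

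The main obstacle is the measure-zero boundary line $y=\eta x$, together with the moving upper limit $T_i$, both of which are needed to justify dominated convergence. The first is handled by the dichotomy in Proposition~\ref{prop:twonorms}(iii). For the second, we would use the bound $H\le x^n\vol(\chi_0)$ and the estimate $|\int_{T}^{T_i}H|\le e^n\vol(\chi_0)|T_i-T|\to0$.
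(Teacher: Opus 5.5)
Your proof is correct, and Steps~1 and~2 match the paper's proof exactly. They give the bound $e^{-\delta_i}T(\chi_0;\chi)\le T(\chi_i;\chi)\le e^{\delta_i}T(\chi_0;\chi)$ and the squeeze $e^{-n\delta_i}H_{\chi_0,\chi}(1,e^{\delta_i}y)\le H_{\chi_i,\chi}(1,y)\le e^{n\delta_i}H_{\chi_0,\chi}(1,e^{-\delta_i}y)$. You differ only in how you pass from the squeeze to the integrals. You use a.e.\ pointwise convergence via Proposition~\ref{prop:twonorms}(iii), a uniform bounded domain, dominated convergence, and a separate tail estimate for the moving upper limit $T_i$. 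The paper instead substitutes $u=e^{\mp\delta_i}y$ directly in the squeeze. It then compares upper limits using $e^{-\delta_i}T_i\le T_0\le e^{\delta_i}T_i$ and $H\ge0$, which gives
\[
e^{-(2n+1)\delta_i}\,\widetilde{\vol}(\chi_0;\chi)\le\widetilde{\vol}(\chi_i;\chi)\le e^{(2n+1)\delta_i}\,\widetilde{\vol}(\chi_0;\chi),
\qquad \widetilde{\vol}(\chi_i;\chi)\coloneqq\frac{1}{\vol(\chi_i)}\int_{-\infty}^{T_i}H_{\chi_i,\chi}(1,y)\,dy.
\]
That route is shorter and yields an explicit rate in $\delta_i$. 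It needs no continuity of $H$, no dominated convergence, and no uniform lower cutoff. Yours works but does more for a non-quantitative conclusion.

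There are two sign slips to fix; neither is a gap.
\begin{itemize}
\item Your opening comparison $e^{-\delta_i}\chi_0\le\chi_i\le e^{\delta_i}\chi_0$ is impossible for $\delta_i>0$, since $\chi_0<0$ off constants. The correct one, which you do use in Step~1, is $e^{\delta_i}\chi_0\le\chi_i\le e^{-\delta_i}\chi_0$.
\item The uniform lower cutoff in Step~3 needs the \emph{upper} bound $\chi\le -C\chi_i$, not $\chi\ge -C\chi_i$; the latter would force $\chi\ge0$ and is not implied by boundedness. It follows from $\chi\le -C'\chi_0$ and $\chi_0\ge e^{\delta_i}\chi_i$. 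Alternatively, read it off your own squeeze, which shows $H_{\chi_i,\chi}(1,y)=0$ for $y<-C'e^{\delta_i}$.
\end{itemize}
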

\begin{proof}
    Set $\d_i:=d_\infty(\chi_i,\chi_0)$, and write $T_i:=T(\chi_i;\chi)$, $T_0:=T(\chi_0;\chi)$. It is then clear that $e^{-\d_i}T_0\le T_i\le e^{\d_i}T_0$, so $\lim_iT_i=T_0$. To prove that $\vol(\chi_i;\chi)$ it now suffices to show that 
    \[
    \widetilde\vol(\chi_i;\chi)\coloneqq \frac{1}{\vol(\chi_i)}\int_{-\infty}^{T_i}H_i(1,y)\, dy \to \frac{1}{\vol(\chi_0)}\int_{-\infty}^{T_0}H_0(1,y)\, dy =: \widetilde\vol(\chi_0; \chi),
    \]
    where $H_i=H_{\chi_i,\chi}$ and $H_0=H_{\chi_0,\chi}$. The inequalities $e^{\d_i}\chi_0\le\chi_i\le e^{-\d_i}\chi_0$ yield 
    \[
    e^{-n\d_i}\vol(\chi_0)\le\vol(\chi_i)\le e^{n\d_i}\vol(\chi_0)
    \]
    as well as  
    \[
    e^{-n\d_i}H_0(1,e^{\d_i}y)
    \le H_i(1,y)
    \le e^{n\d_i}H_0(1,e^{-\d_i}y)
    \]
    for any $y\in\R$. Integration then yields
    \[
    e^{-(2n+1)\d_i}\widetilde{\vol}(\chi_0;\chi)
    \le\widetilde{\vol}(\chi_i;\chi)
    \le e^{(2n+1)\d_i}\widetilde{\vol}(\chi_0;\chi),
    \]
    and we are done.
\end{proof}
\begin{exam}\label{exam:Striv}
    For any bounded negative norm $\chi_0$, Lemma~\ref{lem:Sinv} implies
    \begin{align*}
    \vol(\chi_0; \chi_0)&=\frac{1}{\vol(\chi_0)}\int_0^1 \vol(\chi_0\wedge\frac{\chi_0}{\la})\, d\la -1 \\
    &=\frac{1}{\vol(\chi_0)}\int_0^1 \la^n\vol(\chi_0)\, d\la-1 = -\frac{n}{n+1}.
    \end{align*}
\end{exam}

\begin{exam}\label{exam:globalS}
    If $\chi_0=\chi_{\bar X, D}$ is a geometric norm given by a compactification $(\bar X, D)$ with $D$ strictly effective and ample, then for any $f\in \cF^m_{\chi_0}R\cong H^0(\bar X, mD)$, $v(f)=v(s)-mv(D)$ for $s=fs_D^m$. A direct computation shows $\vol_m(v_0; v) = S_m(\cO_{\bar X}(D); v)-v(D)$, where $S_m(\cO_{\bar X(D)}; v)$ is the usual $S$-invariant on $(\bar X, \cO_{\bar X}(D))$ as defined in~\cite{BlJ}.
\end{exam}


\begin{cor}
    For $\chi_0, \chi$ as above, we have
    \[
    \vol(\chi_0; \chi)\ge -\frac{n}{n+1}T(\chi_0; \chi).
    \]
\end{cor}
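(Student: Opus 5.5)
The plan is to reduce the corollary to the formula of Lemma~\ref{lem:Sinv} together with the elementary lower bound $\chi\ge T\chi_0$, where $T\=T(\chi_0;\chi)$. By Definition~\ref{defi:T}, $T\chi_0\le\chi$ on $R$. Hence $\cF^{\la T}_{\chi_0}R\subseteq\cF^{\la}_{\chi}R$ when $T>0$. More precisely, for $y\in[0,T]$ we have
\[
\cF^{m}_{\chi_0}R\cap\cF^{\la y}_{\chi}R\supseteq \cF^{\min\{m,\la y/T\}}_{\chi_0}R.
\]
Taking $m=\la$ and passing to the limit gives
\[
H(1,y)\ge \vol(\chi_0)\,(y/T)^n \quad\text{for } 0\le y\le T,
\]
where I use $\vol(\chi_0/x)=x^n\vol(\chi_0)$, as in Proposition~\ref{prop:twonorms}(iv)--(v). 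For $y<0$ I only use $H(1,y)\ge0$.

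Next I plug this into Lemma~\ref{lem:Sinv}. This gives
\[
\vol(\chi_0;\chi)\ge \frac{1}{\vol(\chi_0)}\int_0^T \vol(\chi_0)(y/T)^n\,dy - T=\frac{T}{n+1}-T=-\frac{n}{n+1}T.
\]
This is exactly the computation of Example~\ref{exam:Striv}, now carried out with $\chi_0$ rescaled by $T$. As a sanity check, equality holds when $\chi=\chi_0$, where $T=1$.

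The degenerate case $T=0$ needs a separate argument, since then $\chi\ge0$ on $R$. Every $\chi$-orthogonal basis element then has $\chi(f_i)\ge0$, so $\vol_\la(\chi_0;\chi)\ge0$ directly from the definition, and the inequality is trivial.

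Alternatively, one can avoid $H$ entirely and argue at finite level. Choose a $\chi$-orthogonal basis $\{f_i\}$ of $\cF^\la_{\chi_0}R$. Then $\chi(f_i)\ge T\chi_0(f_i)$, so
\[
\vol_\la(\chi_0;\chi)\ge T\cdot\frac{1}{\la N_\la}\sum_i\chi_0(f_i).
\]
This sum is bounded below by the $\chi_0$-orthogonal version, because the orthogonal basis maximizes the sum; equivalently, use a basis orthogonal for both norms, which exists by \cite[Lemma~6.18]{JW26}. So the right side is at least $T\vol_\la(\chi_0;\chi_0)$, and letting $\la\to\infty$ with Example~\ref{exam:Striv} gives the claim.

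The only subtle point is justifying that the $\chi_0$-values of a $\chi$-orthogonal basis sum to at least the $\chi_0$-orthogonal sum. The joint orthogonal basis settles this directly, so I would present that version.
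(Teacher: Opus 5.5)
Your preferred finite-level argument is essentially the paper's proof, which applies monotonicity and $1$-homogeneity of $\chi\mapsto\vol(\chi_0;\chi)$ to $\chi\ge T\chi_0$ and then uses Example~\ref{exam:Striv}; your computation via $H$ and Lemma~\ref{lem:Sinv} is also correct. One correction: maximality of orthogonal bases gives the \emph{opposite} of what you state, since the $\chi_0$-values of a $\chi$-orthogonal basis sum to \emph{at most} the $\chi_0$-orthogonal sum. So it is the joint orthogonal basis that makes the step work, or alternatively starting from a $\chi_0$-orthogonal basis $\{g_i\}$ and using maximality for $\chi$: the sum of $\chi$ over a $\chi$-orthogonal basis is at least $\sum_i\chi(g_i)\ge T\sum_i\chi_0(g_i)$.
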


\begin{proof}
    As $\chi\ge T(\chi_0;\chi)\chi_0$, the result follows from $\chi\mapsto\vol(\chi_0;\chi)$ being increasing and 1-homogeneous, together with Example~\ref{exam:Striv}.
\end{proof}

The following is a version of~\cite[Proposition~3.15]{BlJ} and~\cite[Proposition~2.3]{LX16}.
\begin{prop}
    Let $v_0$ be a bounded negative valuation and $v$ a bounded valuation centered at infinity. Then $\vol(v_0; v)=-\frac{n}{n+1}T(v_0; v)$ iff $v=cv_0$ for some $c> 0$.
\end{prop}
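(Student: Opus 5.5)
Both directions go through the formula of Lemma~\ref{lem:Sinv} for the bivariate distribution function $H=H_{v_0,v}$. Write $T\=T(v_0;v)>0$; it is positive because $v$ is centered at infinity. After replacing $v$ by $v/T$, using that $\vol(v_0;\cdot)$ is $1$-homogeneous, we may assume $T=1$, so that $v\ge v_0$.

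The easy direction is a computation. If $v=cv_0$, then $T(v_0;v)=c$, and $\vol(v_0;cv_0)=c\vol(v_0;v_0)=-\frac{n}{n+1}c$ by Example~\ref{exam:Striv}.

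For the converse, assume $T=1$ and $\vol(v_0;v)=-\frac{n}{n+1}$, and set $V\=\vol(v_0)$.
\begin{itemize}
\item By Lemma~\ref{lem:Sinv}, $\int_{-\infty}^1H(1,y)\,dy=V/(n+1)$.
\item Since $v\ge v_0$, we have $\cF^{\la}_{v_0}R\cap\cF^{\la y}_{v_0}R\subseteq \cF^\la_{v_0}R\cap\cF^{\la y}_vR$. Hence $H(1,y)\ge H_{v_0,v_0}(1,y)=Vy^n$ for $0\le y\le1$.
\item Integrating gives $\int_0^1 Vy^n\,dy=V/(n+1)$. So equality forces $H(1,y)=Vy^n$ for all $y\in[0,1]$, and $H(1,y)=0$ for $y<0$.
\end{itemize}

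It remains to deduce $v=v_0$ from $H(1,y)=Vy^n$ on $[0,1]$. This is the main obstacle, and I would argue by contradiction as in~\cite[Proposition~2.3]{LX16} and~\cite[Proposition~3.15]{BlJ}. Suppose there is $f\in R\setminus k$ with $v(f)>v_0(f)$. Rescaling, put $a\=-v_0(f)>0$ and $b\=-v(f)<a$. Multiplication by powers of $f$ and valuativity give, for integers $j\ge0$,
\[
f^j\cdot\big(\cF^{\la-ja}_{v_0}R\cap\cF^{\la y-jb}_vR\big)\subseteq \cF^\la_{v_0}R\cap\cF^{\la y}_vR.
\]
This is the step that needs both $v_0$ and $v$ to be valuations rather than norms. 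From this I would derive, via the Okounkov body picture in the proof of Proposition~\ref{prop:twonorms}, a strict improvement of the form
\[
H(1,y)\ge H\big(1-t a,\,y-t b\big)\quad\text{for small } t>0.
\]
Combined with homogeneity~(i) and the explicit form $H(1,y)=Vy^n$, this forces strict inequality somewhere. Concretely, at $y$ slightly above $0$ the point $(1-ta,\,y-tb)$ has ratio $\frac{y-tb}{1-ta}$ exceeding $\frac{y}{1}$ when $b<ay$ fails appropriately, giving $H(1,y)>Vy^n$, a contradiction.

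The delicate point is turning the inclusion of vector spaces into a strict inequality of volumes. The cleanest route I see is to use concavity of $H^{1/n}$ from Proposition~\ref{prop:twonorms}(iii) on the region $\{y>\eta x\}$. Along the ray through the points $(1,y)$ and $(a,b)$, the function $H^{1/n}$ must be linear, since it equals $V^{1/n}y$. Superadditivity applied with the extra "direction" $(a,b)$ contributed by $f$ then gives $H(a,b)>0$. On the other hand, $H(1,y)=0$ for $y<0$ together with homogeneity and monotonicity should be incompatible with linearity of $H^{1/n}$ once $b/a<1$.

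If this concavity bookkeeping fails, the fallback is the direct argument of~\cite{BlJ}. There one computes $S$ along a $v_0$-compatible flag: $v\ge v_0$ with $S$-equality forces $v=v_0$ on each graded piece $\cF^\la_{v_0}R/\cF^{<\la}_{v_0}R$, and hence on all of $R$.
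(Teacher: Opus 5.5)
Your reduction is sound. With $T(v_0;v)=1$, Lemma~\ref{lem:Sinv} together with $\cF^{\la y}_{v_0}R\subseteq\cF^\la_{v_0}R\cap\cF^{\la y}_vR$ does force $H_{v_0,v}=H_{v_0,v_0}$, where $H_{v_0,v_0}(x,y)=V\min\{x,y\}^n$ for $x>0$, $y\ge0$, and $0$ for $y<0$. But the remaining step, which carries all the content, is not proved, and the mechanisms you propose do not work. The inequality $H(1,y)\ge H(1-ta,y-tb)$ coming from your inclusion is not strict. In the typical case $v_0(f)<v(f)<0$, i.e.\ $0<b<a$, it is already satisfied by $H_{v_0,v_0}$, since $V\min\{1-ta,y-tb\}^n\le V(y-tb)^n<Vy^n$. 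Your ``ratio'' computation drops the factor $(1-ta)^n$ coming from homogeneity, and the inequality only produces a contradiction when $v(f)>0$. The concavity route cannot succeed either. $H_{v_0,v_0}$ is itself homogeneous and monotone, vanishes for $y<0$, has $H^{1/n}$ concave and linear on $\{0\le y\le x\}$, and has $H(a,b)>0$. So no contradiction can come from these properties plus a single element $f$, which contributes only a point to the Okounkov bodies. The fallback via~\cite{BlJ} just asserts the conclusion. Equality of $\vol$ only says that the average of $v-v_0$ over a common orthogonal basis of $\cF^\la_{v_0}R$ is $o(\la)$, which by itself says nothing about any given $f$.

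The missing idea is to show that $f$ produces a \emph{positive proportion} of new elements beyond what $v_0$ already provides. The paper does this directly, without $H$. After replacing $f$ by a power so that $v_0(f)=-q$ and $v(f)\ge v_0(f)+1$, it takes a basis $(f_{i,k})_k$ of $\cF^{m-iq}_{v_0}R$ compatible with both $v_0$ and $v$. The $N_{m-iq}$ elements $f^if_{i,k}\in\cF^m_{v_0}R$ are linearly independent with $v-v_0\ge i$ on each, while $v-v_0\ge0$ everywhere. Hence $\vol_m(v_0;v)-\vol_m(v_0;v_0)\ge iN_{m-iq}/(mN_m)$, which stays bounded below by a positive constant for $i\approx m/(2q)$. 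Within your framework the same idea works if you use a \emph{sum} instead of a single inclusion: $\cF^{\la y}_{v_0}R+f^j\cF^{\la y-jb}_{v_0}R\subseteq\cF^\la_{v_0}R\cap\cF^{\la y}_vR$, valid for $j(a-b)\le\la(1-y)$. Since $v_0$ is a valuation, the two summands meet exactly in $f^j\cF^{\la y-ja}_{v_0}R$. With $j\approx t\la$ this gives $H(1,y)\ge V\bigl(y^n+(y-tb)^n-(y-ta)_+^n\bigr)>Vy^n$ for $y\in(0,1)$ and small $t>0$, contradicting $H(1,y)=Vy^n$.
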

\begin{proof}
    It is clear that if $v=cv_0$ for some $c> 0$, then equality holds. It suffices to show the forward direction.
    First observe that we may rescale $v$ so that $T(v_0; v)=1$, that is $v_0\le v$. We will show $v_0=v$.
    Suppose to the contrary that there is some $f\in R$ such that $v_0(f)<v(f)\ne 0$. Such $f$ always exists if $v$ is non-trivial. After replacing $f$ by a higher power, we may also assume $-q\=v_0(f)<-q+1<v(f)\eqqcolon p\ne 0$.

    For fixed $m\in \R_{>0}$, let $1\le i\le \frac mq$ be positive integers.
    Pick a basis
    \[
    \{f_{i,1}, \dots, f_{i, l_i}\}\in \cF^{m-iq}_{v_0}R
    \]
    compatible with both $v_0$ and $v$. 
    Then for each $1\le i\le \frac mq$, the images of
    \[
    \{f^if_{i,k}\mid 1\le k\le l_i\}
    \]
    form a linearly independent set in $\cF_{v_0}^mR$. 
    Moreover, for each $f^if_{i,k}$ as above, $v(f^if_{i,k})-v_0(f^if_{i,k})\ge i(v(f)-v_0(f))\ge i$.
    Thus we have
    \begin{align*}
    \vol_m(v_0; v)-\vol_m(v_0; v_0)
    &\ge \frac{1}{mN_m}\sum_{j=1}^{N_{m-iq}} (v(f^if_{i,j})- v_0(f^if_{i,j}))
    \ge \frac{iN_{m-iq}}{mN_m}\\
    \end{align*}
    Pick $i\sim O(\frac{m}{2q})$ and take $m\to \infty$, we get $\vol(v_0; v)>\vol(v_0; v_0)$, a contradiction.
\end{proof}


%
%
\subsection{Relative volumes and reductions}
Let $\chi_0$ be a negative norm of finite type, and $\chi$ a seminorm. For any $\la\ge0$, the restriction $\chi|_{\cF^\la_{\chi_0}R}$ induces a residue seminorm $\gr^\la_{\chi_0}\chi$ on $\gr^\la_{\chi_0}R=\cF^\la_{\chi_0}R/\cF^{<\la}_{\chi_0}R$. For $\bar f\in\gr_{\chi_0}^\la R$ and $\bar f'\in\gr_{\chi_0}^{\la'}R$ we have $\bar f\bar f'\in\gr_{\chi_0}^{\la+\la'}R$ and $\gr_{\chi_0}^{\la+\la'}(\bar f\bar f')\ge\gr_{\chi_0}^\la(\bar f)+\gr_{\chi_0}^{\la'}(\bar f')$, so we obtain a seminorm $\gr_{\chi_0}\chi$ on $\gr_{\chi_0}R=\bigoplus_\la\gr_{\chi_0}^\la R$. This seminorm is graded in the sense that the different subspaces $\gr^\la_{\chi_0}R$ are orthogonal.

The seminorm $\gr_{\chi_0}\chi_0$, which takes the value $-\lambda$ on $\gr_{\chi_0}^\la R$ for each $\la\in\Phi_{\chi_0}$, is a negative bounded norm. Indeed, let $k[z_1,\dots,z_m]\to R$ be a surjection such that $\chi_0$ is the residue norm induced by a negative monomial valuation $u_0$. Then $\gr_{\chi_0}\chi_0$ is the residue norm of $\gr_{u_0}u_0$ on $\gr_{u_0}$ under the surjection $\gr_{u_0}k[z_1,\dots,z_m]\to\gr_{\chi_0}R$. Now $\gr_{u_0}k[z_1,\dots,z_m]$ is a negative monomial valuation on $\gr_{u_0}k[z_1,\dots,z_m]\simeq k[z_1,\dots,z_m]$, so $\gr_{\chi_0}\chi_0$ is a bounded negative norm.
It follows that if $\chi$ is a bounded (resp.\ bounded negative) norm, so is $\gr_{\chi_0}\chi$.

The grading on $\gr_{\chi_0}R$ gives rise to a translation action by $\R$. If $\rho$ is a graded seminorm on $\gr_{\chi_0}R$, and $c\in\R$, then there is a unique graded seminorm $\rho+c$ on $\gr_{\chi_0}R$ such that $\rho+c=\rho+c\la$ on $\gr_{\chi_0}^\la$. (For example, we can apply this to $\rho=\gr_{\chi_0}\chi$.) If $\rho$ is bounded, so is $\rho+c$. If $\rho$ is bounded negative, so is $\rho+c$ for $c\le 0$.

As before, let $\chi_0$ be a bounded negative norm, and $\chi$ a bounded norm on $R$. Then $\chi$ induces a graded norm $\gr_{\chi_0}\chi$ on $\gr_{\chi_0}R$. For any $c\in\R$, we also get a shifted graded norm $\gr_{\chi_0}\chi+c$ on $\gr_{\chi_0}R$ by setting 
\[
(\gr_{\chi_0}\chi+c)(\bar f) = \gr_{\chi_0}\chi(\bar f)+c\la \text{ for any } \bar f\in \gr_{\chi_0}^\la R.
\]

\begin{lem}\label{lem:gradedvol}
    We have $\vol(\gr_{\chi_0}\chi_0;\gr_{\chi_0}\chi+c)=\vol(\chi_0;\chi)+\frac{cn}{n+1}$.
\end{lem}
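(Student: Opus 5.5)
The idea is to compare the finite-level quantities $\vol_\la$ on $R$ and on $\gr_{\chi_0}R$ directly, and then pass to the limit $\la\to\infty$ using Lemma~\ref{lem:Sinv}. The shift by $c$ is treated separately as an explicit correction term.

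\textbf{Step 1: reduce to $c=0$.} For $\bar f\in\gr^\mu_{\chi_0}R$ we have $(\gr_{\chi_0}\chi+c)(\bar f)=\gr_{\chi_0}\chi(\bar f)+c\mu$. Take a basis of $\cF^\la_{\gr\chi_0}\gr_{\chi_0}R=\bigoplus_{\mu\le\la}\gr^\mu_{\chi_0}R$ that is orthogonal for $\gr_{\chi_0}\chi$ and homogeneous. Such a basis exists because the graded pieces are orthogonal, so we may diagonalize in each piece. This basis is then also orthogonal for the shifted norm, and
\[
\vol_\la(\gr\chi_0;\gr\chi+c)=\vol_\la(\gr\chi_0;\gr\chi)+\frac{c}{\la N_\la}\sum_{\mu\le\la}\mu\dim\gr^\mu_{\chi_0}R .
\]
The last term equals $-c\,\vol_\la(\gr\chi_0;\gr\chi_0)$. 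By Example~\ref{exam:Striv}, applied to the bounded negative norm $\gr_{\chi_0}\chi_0$, it tends to $\frac{cn}{n+1}$. This gives the shift term.

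\textbf{Step 2: the case $c=0$.} First, $\dim\cF^\la_{\chi_0}R=\dim\bigoplus_{\mu\le\la}\gr^\mu R$, so the $N_\la$ agree. Next we show $\sum_i\chi(f_i)=\sum_j\gr\chi(\bar g_j)$ at each level $\la$. Choose a basis $(f_i)$ of $R$ orthogonal for both $\chi_0$ and $\chi$, using \cite[Lemma~6.18]{JW26}. Then $\{f_i\}\cap\cF^\la_{\chi_0}R$ is a basis of $\cF^\la_{\chi_0}R$ that is $\chi$-orthogonal and adapted to the $\chi_0$-filtration.

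For each $\mu$, the images of those $f_i$ with $\chi_0(f_i)=-\mu$ form a basis of $\gr^\mu R$. The key claim is that this basis is orthogonal for the residue norm, with $\gr\chi(\bar f_i)=\chi(f_i)$. To see this, take $g=\sum a_if_i+h$ with $h\in\cF^{<\mu}$ and the $f_i$ of level $\mu$. Expand $h$ in the remaining basis elements of lower level. By joint orthogonality,
\[
\chi(g)=\min\bigl(\min_i\chi(a_if_i),\chi(h)\bigr)\le\min_i\chi(a_if_i).
\]
Taking the supremum over lifts, the residue value is exactly $\min_i\chi(a_if_i)$, which is the orthogonality statement.

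Summing over $\mu\le\la$ gives equality of $\vol_\la$ for every $\la$, and Lemma~\ref{lem:Sinv} then identifies the limits. The main care needed is in this residue-orthogonality claim, which depends on the basis being orthogonal for both norms at once. The rest is bookkeeping.
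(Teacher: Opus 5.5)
Your proof is correct and follows essentially the same route as the paper: a basis orthogonal for both $\chi_0$ and $\chi$ descends to orthogonal bases of the graded pieces with $\gr_{\chi_0}\chi(\bar f_i)=\chi(f_i)$, so the finite-level quantities $\vol_\la$ agree, and the shift contributes $-c\,\vol_\la(\chi_0;\chi_0)\to\frac{cn}{n+1}$ by Example~\ref{exam:Striv}. The differences are cosmetic. You isolate the shift on the graded side and apply the example to $\gr_{\chi_0}\chi_0$, which is legitimate since it is a bounded negative norm, or since by your Step 2 its $\vol_\la$ coincides with that of $\chi_0$. You also supply the residue-orthogonality argument that the paper only asserts.
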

\begin{proof}
    For each $\la\ge 0$, we have 
    \[
    N_\la \= \dim\cF^\la_{\chi_0} R = \sum_{\mu\le\la}\dim\gr_{\chi_0}^{\mu}R = \dim \cF^\la_{\gr_{\chi_0}\chi_0}\gr_{\chi_0}R.
    \]
    Pick a common orthogonal basis $\{f_j\}_0^{N_\la}$ of $\cF^\la R$ for $\chi_0$ and $\chi$, and $\la\gg0$. We can choose this basis such that $f_0=1$ and $\chi_0(f_j)\ge\chi_0(f_{j+1})$ for all $j$. Let $0=\la_0<\la_1<\la_2<\dots$ be the discrete subset of $\R_{\ge0}$ consisting of jumping numbers for $\chi_0$, that is, numbers $\la\ge0$ such that $\gr^\la_{\chi_0}R\ne0$. Set $N_j:=\dim\cF_{\chi_0}^{\la_j}R$ and $N'_j:=\dim\gr^{\la_j}_{\chi_0}R$. Then $N_0=N'_0=1$, and $N_j=N_{j-1}+N'_j$ for $j\ge1$. Moreover, $\chi_0(f_0)=0$, whereas $\chi_0(f_l)=-\la_j$ for $N_{j+1}<l\le N_j$, $j\ge 1$. Since the basis is orthogonal for both $\chi$ and $\chi_0$ implies that the images $\bar f_l$ of $f_l$ in $\gr^{\la_j}_{\chi_0}R$, $N_{j-1}<l\le N_j$, form an orthogonal basis for $\gr^{\la_j}_{\chi_0}\chi$, with $\gr^{\la_j}_{\chi_0}\chi(\bar f_l)=\chi(f_l)$.   
    It now follows that 
    \begin{align*}
    \vol(\chi|_{\cF^{\la_j}_{\chi_0}R})-c\vol(\chi_0|_{\cF^{\la_j}_{\chi_0}R})
    &=\sum_{0\le l\le N_j}(\chi(f_l)-c\chi_0(f_l)) \\
    &=\sum_{1\le k\le j}\sum_{N_{k-1}<l\le N_{k}}(\gr_{\chi_0}^{\la_k}\chi(\bar f_l)+c\la_k) \\
    &=\vol(\gr_{\chi_0}\chi|_{\cF^{\la_j}_{\gr_{\chi_0}\chi_0}\gr_{\chi_0}R}+c).
    \end{align*}
    Thus
    \[
    \vol_{\la_j}(\chi_0; \chi)-c\vol_{\la_j}(\chi_0; \chi_0) = \vol_{\la_j}(\gr_{\chi_0}\chi_0; \gr_{\chi_0}\chi+c),
    \]
    and the result follows by letting $\la_j\to \infty$ and applying Example~\ref{exam:Striv}.
\end{proof}

%
%
\section{K-semistable negative valuations}\label{sec:Kss}
We now assume that $X$ is a klt affine Calabi--Yau variety with $v_0$ a bounded negative quasimonomial valuation. The goal is to define K-semistability of $v_0$ through valuative invariants and prove Theorems~B and~C.

%
%
\subsection{Valuative invariants}
We first define K-semistability of $v_0$ through valuative invariants. 
In what follows, we fix a bounded negative quasimonomial valuation $v_0$ on $X$ with $A_X(v_0)<0$. Let $v\in X^\val_\infty$ be of linear growth. 
Recall that 
\[T(v_0; v)=\FS(\chi_{v_0})(v)=\sup_{f\in R\setminus k}\frac{v(f)}{v_0(f)}.\]
We define
\[
S(v_0; v)\coloneqq A_X(v_0)\frac{\vol(v_0; v)}{\vol(v_0; v_0)} = -\frac{(n+1)A_X(v_0)}{n}\vol(v_0;v).
\]
Without assuming the finite generation property, we use a valuative criterion to define K-semistability for negative quasimonomial valuations.
\begin{defi}
    A bounded negative quasimonomial valuation $v_0$ on $X$ is said to be \emph{K-semistable} if $A_X(v_0)<0$ and
    \[
    A_X(v)\ge S(v_0; v),
    \]
    for all $v\in X^\val_\infty$. 
    If $v_0$ is further finitely generated with klt degenerations, then we say that the polarized affine Calabi--Yau variety $(X, v_0)$ is K-semistable.
\end{defi}
Analogous to the definition of $\delta$-invariants for Fano varieties, see~\cite{BlJ}, for a bounded negative quasimonomial valuation $v_0$ on $X$ with $A_X(v_0)<0$, we define
\[
\delta(X,v_0)\coloneqq \inf\left\{\frac{A_X(v)-A_X(v_0)T(v_0; v)}{S(v_0; v)-A_X(v_0)T(v_0; v)} \right\},
\]
where infimum is taken over all $v\in X^\val_\infty$ of finite log discrepancy and $v\notin \R_{>0}v_0$.
It follows directly from the definition that $v_0$ is K-semistable iff $\delta(X,v_0)\ge 1$.


For $v_0$ finitely generated, using Lemma~\ref{lem:gradedvol}, we also have the following comparison of valuative invariants between the ones on $X$ and on $(X_{v_0},\Delta_{v_0}, \xi_{v_0})$.
\begin{lem}\label{lem:STcomp}
   Let $\chi$ be a bounded norm on $R$ and set $\widehat{\gr_{v_0}\chi}\coloneqq \gr_{v_0}\chi+T(v_0; \chi)$. Then:
   \begin{itemize}
       \item[(i)] $S(v_0; \chi)-A_X(v_0)T(v_0; \chi)=S(\xi_{v_0}; \widehat{\gr_{v_0} \chi}).$
       \item[(ii)] $T(\xi_{v_0}; \widehat{\gr_{v_0} \chi})=T(v_0; \chi)-T(\chi;v_0)^{-1}$ if $\chi$ is negative, and $T(\xi_{v_0}; \widehat{\gr_{v_0} \chi})\ge T(v_0; \chi)$ otherwise.
   \end{itemize}
\end{lem}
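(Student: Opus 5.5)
The plan is to derive both statements from Lemma~\ref{lem:gradedvol}, taking the shift to be $c=T\coloneqq T(v_0;\chi)$, and to unwind the definitions of the invariants on $(X_{v_0},\xi_{v_0})$. I will identify the Reeb valuation $v_{\xi_{v_0}}$ with $-\gr_{v_0}v_0$: it takes the value $\la$ on $\gr^\la_{v_0}R$. The shifted norm $\widehat{\gr_{v_0}\chi}=\gr_{v_0}\chi+T$ is graded. Since $\chi\ge T\chi_{v_0}$, it is nonnegative on each $\gr^\la_{v_0}R$, so it behaves like a norm ``centered at the cone point''. This is the setting in which $S(\xi_{v_0};\cdot)$ and $T(\xi_{v_0};\cdot)$ make sense. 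Concretely, I take
\[
S(\xi_{v_0};\rho)=\tfrac{(n+1)A_{X_{v_0}}(\xi_{v_0})}{n}\vol(\gr_{v_0}v_0;\rho)
\quad\text{and}\quad
T(\xi_{v_0};\rho)=\sup_{\bar f\in\gr^\la_{v_0}R,\ \la>0}\rho(\bar f)/\la .
\]
These are the local analogues obtained from Example~\ref{exam:Striv} after the sign change between $v_{\xi}$ and $\gr_{v_0}v_0$.

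For (i), Lemma~\ref{lem:gradedvol} with $c=T$ gives
\[
\vol(\gr_{v_0}v_0;\widehat{\gr_{v_0}\chi})=\vol(v_0;\chi)+\tfrac{Tn}{n+1}.
\]
I multiply this by $-\frac{(n+1)A_X(v_0)}{n}$. The left side becomes $S(\xi_{v_0};\widehat{\gr_{v_0}\chi})$, using $A_{X_{v_0}}(\xi_{v_0})=-A_X(v_0)$ from Lemma~\ref{lem:ldeq}. The right side becomes $S(v_0;\chi)-A_X(v_0)T$, which is (i). The only care needed is to check that the normalizations, namely $\vol(v_0;v_0)=-\frac{n}{n+1}$ and the factor $N_\la$, match on the two sides. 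This holds because the proof of Lemma~\ref{lem:gradedvol} uses the same $N_\la$ on $R$ and on $\gr_{v_0}R$.

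For (ii), I would write
\[
T(\xi_{v_0};\widehat{\gr_{v_0}\chi})=T+\sup_{\la>0,\ \bar f\in\gr^\la_{v_0}R\setminus0}\frac{\gr_{v_0}\chi(\bar f)}{\la}.
\]
The key step is to identify this supremum with $\sup_{f\in R\setminus k}\chi(f)/(-v_0(f))$. In one direction, each $f$ with $v_0(f)=-\la$ has initial form $\bar f\in\gr^\la_{v_0}R$ with $\gr_{v_0}\chi(\bar f)\ge\chi(f)$. In the other direction, $\gr_{v_0}\chi(\bar f)$ is a supremum of $\chi$ over lifts, and every lift $f$ has $v_0(f)=-\la$. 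So the two suprema agree, and this is the step I would check most carefully: the sup over lifts may not be attained, but only the value of the supremum matters, and the common orthogonal bases from~\cite[Lemma~6.18]{JW26} give a cleaner description if needed.

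If $\chi$ is negative, then
\[
\sup_f\frac{\chi(f)}{-v_0(f)}=-\inf_f\frac{\chi(f)}{v_0(f)}=-\Bigl(\sup_f\frac{v_0(f)}{\chi(f)}\Bigr)^{-1}=-T(\chi;v_0)^{-1},
\]
which gives the stated equality. If $\chi$ is not negative, there is a nonconstant $f$ with $\chi(f)\ge0$ and $v_0(f)<0$. Then the supremum is $\ge0$, and so $T(\xi_{v_0};\widehat{\gr_{v_0}\chi})\ge T(v_0;\chi)$.
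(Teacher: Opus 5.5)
Your proposal is correct and follows the paper's proof. For (i), you apply Lemma~\ref{lem:gradedvol} with shift $c=T(v_0;\chi)$ together with $A_{X_{v_0}}(\xi_{v_0})=-A_X(v_0)$ from Lemma~\ref{lem:ldeq}; for (ii), you unwind $v_0(f)T(v_0;\chi)\le\chi(f)\le T(\chi;v_0)^{-1}v_0(f)$ exactly as the paper does, and your identification of the supremum over homogeneous pieces of $\gr_{v_0}R$ with $\sup_{f\in R\setminus k}\chi(f)/(-v_0(f))$ makes explicit a step the paper leaves implicit.
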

\begin{proof}
    Write $r=-A_X(v_0)$. Since $\chi(f)-v_0(f)T(v_0; \chi)\ge 0$ for all $f\in R$, $\widehat{\gr_{v_0}\chi}$ defines a non-negative graded norm on $(X_{v_0}, \xi_{v_0})$. By Lemma~\ref{lem:ldeq} and Lemma~\ref{lem:gradedvol}, we get
    \begin{align*}
    S(\xi_{v_0}; \widehat{\gr_{v_0}\chi}) 
    &=\frac{(n+1)r}{n}\vol(\gr_{v_0}v_0; \gr_{v_0}\chi+T(v_0; \chi)) \\
    &=\frac{(n+1)r}{n}(\vol(v_0; \chi)+\frac{n}{n+1}T(v_0; \chi)) = S(v_0; \chi)+rT(v_0; \chi).
    \end{align*}

    To prove~(ii), note that for negative norms $\chi$, we have $v_0(f)T(v_0; \chi)\le \chi(f)\le T(\chi; v_0)^{-1}v_0(f)$ for any $f\in R\setminus k$, and it follows that $T(\xi_{v_0}, \widehat{\gr_{v_0}\chi}) = -T(\chi; v_0)^{-1}+T(v_0; v)$. If $\chi$ is not negative, then there is some $f\in R\setminus k$ with $\chi(f)\ge 0$, and so $T(\xi_{v_0}; \widehat{\gr_{v_0}\chi})\ge \chi(f)+T(v_0; \chi)\ge T(v_0; \chi)$. 
\end{proof}
We first record an analog of~\cite[Lemma~3.3]{XZstab}.
\begin{lem}\label{lem:ksslct}
    If a negative valuation $v_0$ is K-semistable, then for any effective $\Q$-Cartier $\Q$-divisor $D\=\frac 1k\{f=0\}$ with $f\in R$ on $X$ satisfying $v_0(D)>\frac{A_X(v_0)}{n}$, we have
    \[
    \lct(X, D; \chi_{v_0})=-A_{X, D}(v_0).
    \]
\end{lem}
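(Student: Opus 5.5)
The statement is an equality, and I plan to prove the two inequalities separately. Here $\lct(X,D;\chi_{v_0})=\sup_{v\in X^\val_\infty}\frac{-A_{X,D}(v)}{T(v_0;v)}$.

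\emph{Lower bound.} Testing $v=v_0$ and using $T(v_0;v_0)=1$, as in Lemma~\ref{lem:lctgeld}, gives $\lct(X,D;\chi_{v_0})\ge -A_{X,D}(v_0)$.

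\emph{Upper bound: reduction.} Fix $v\in X^\val_\infty$ with $A_X(v)<\infty$. By homogeneity, rescale so that $T(v_0;v)=1$, i.e.\ $v\ge v_0$ on $R$. It then suffices to show $A_{X,D}(v)\ge A_{X,D}(v_0)$, that is,
\[
A_X(v)-A_X(v_0)\ge v(D)-v_0(D)\eqqcolon b .
\]
By K-semistability, $A_X(v)\ge S(v_0;v)$. Put $r=-A_X(v_0)>0$. Example~\ref{exam:Striv} gives $\vol(v_0;v_0)=-\frac n{n+1}$, so
\[
S(v_0;v)-A_X(v_0)=\tfrac{(n+1)r}{n}\bigl(\vol(v_0;v)-\vol(v_0;v_0)\bigr).
\]
The goal is therefore the estimate
\[
\vol(v_0;v)-\vol(v_0;v_0)\ \ge\ \tfrac{n}{(n+1)r}\,b .
\]
If $b\le 0$ this already follows from the corollary $\vol(v_0;v)\ge -\frac n{n+1}T(v_0;v)$. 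So assume $b>0$.

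\emph{Upper bound: the counting estimate.} Write $D=\frac1k\{f=0\}$, and set $q=-v_0(f)>0$ and $a=v(f)-v_0(f)=kb>0$. The hypothesis $v_0(D)>\frac{A_X(v_0)}n$ reads $\frac qk<\frac rn$. I will imitate the proof of the proposition characterizing $\vol(v_0;v)=-\frac n{n+1}T(v_0;v)$.

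1. For each $0\le i\le \lambda/q$, choose a basis of a complement of $\cF^{\lambda-(i+1)q}_{v_0}R$ in $\cF^{\lambda-iq}_{v_0}R$, orthogonal for both $v_0$ and $v$.
2. Multiply the elements of the $i$-th piece by $f^i$. Since $v_0$ is a valuation, the products together span a subspace of $\cF^\lambda_{v_0}R$ of dimension about $N_\lambda$ (up to lower order), and $(v-v_0)(f^ig)\ge ia$ for each product.
3. A basis orthogonal for both norms maximizes $\sum v$ among $v_0$-adapted bases, and all $v_0$-adapted bases have the same $\sum v_0$. Hence
\[
\lambda N_\lambda\bigl(\vol_\lambda(v_0;v)-\vol_\lambda(v_0;v_0)\bigr)\ \ge\ a\sum_{i\ge1}N_{\lambda-iq}.
\]
4. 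Since $N_\mu\sim \vol(v_0)\mu^n/n!$, the right-hand side is asymptotic to $a\frac{\lambda}{q}\int_0^1(1-t)^n\,dt\cdot N_\lambda$. Letting $\lambda\to\infty$ gives
\[
\vol(v_0;v)-\vol(v_0;v_0)\ \ge\ \frac{a}{(n+1)q}=\frac{b}{(n+1)q/k}.
\]
5. Multiplying by $\frac{(n+1)r}{n}$ yields
\[
S(v_0;v)-A_X(v_0)\ \ge\ b\cdot\frac{rk}{nq}\ \ge\ b,
\]
where the last step is exactly $\frac qk\le \frac rn$. This gives the upper bound.

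The main obstacle is step 3: justifying that the products $f^ig$ can be completed to a $v_0$-adapted basis of $\cF^\lambda_{v_0}R$ whose $v$-sum is bounded above by the sum over a jointly orthogonal basis, so that $\vol_\lambda$ is controlled by this ad hoc basis. I will handle it by the standard fact that, for two filtrations on a finite-dimensional space, a jointly orthogonal basis maximizes $\sum\chi(f_i)$ over $v_0$-adapted bases. Linear independence of the $f^ig$ modulo lower filtration pieces comes from $v_0(f^ig)=iv_0(f)+v_0(g)$, which holds since $v_0$ is a valuation.
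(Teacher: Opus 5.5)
Your overall strategy is sound, but there is a genuine gap in steps 1--2; step 3 is standard. Take $g\ne0$ in a complement $W_i$ of $\cF^{\lambda-(i+1)q}_{v_0}R$ in $\cF^{\lambda-iq}_{v_0}R$. Then $v_0(g)\in[-(\lambda-iq),-(\lambda-(i+1)q))$, so $v_0(f^ig)\in[-\lambda,-\lambda+q)$ for \emph{every} $i$. All your products therefore lie in the same top $v_0$-slab, and the identity $v_0(f^ig)=iv_0(f)+v_0(g)$ does not separate different $i$. Worse, a $v_0$-orthogonal family with all values in $[-\lambda,-\lambda+q)$ spans a subspace meeting $\cF^{\lambda-q}_{v_0}R$ trivially, so it has at most $N_\lambda-N_{\lambda-q}=o(N_\lambda)$ elements. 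Your roughly $N_\lambda$ products can thus never be both linearly independent and $v_0$-orthogonal, and step 3 needs both, since all bases share the same $\sum v_0$ only when they are $v_0$-adapted. Concretely, take $\A^2$ with $v_0=-\deg$, $f=x$, $\lambda=m$ and monomial bases. Each $x^iW_i$ lies in the span of degree-$m$ monomials, so the products span $m+1$ dimensions rather than $N_m=\binom{m+2}{2}$.

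The fix is to filter $V_\lambda:=\cF^\lambda_{v_0}R$ by divisibility by $f$ instead of by $v_0$-level:
$V_\lambda\supseteq fV_{\lambda-q}\supseteq f^2V_{\lambda-2q}\supseteq\cdots$.
Because $v_0$ is a valuation, $f^jh\in V_\lambda$ iff $h\in V_{\lambda-jq}$. Take a basis of $V_\lambda$ adapted simultaneously to this filtration and to the $v_0$-filtration; two filtrations of a finite-dimensional space always admit a common adapted basis.
\begin{itemize}
\item This basis is $v_0$-orthogonal.
\item An element at level exactly $j$ has the form $f^ju$ with $u\in V_{\lambda-jq}$, so $(v-v_0)(f^ju)\ge ja$ because $v\ge v_0$.
\item There are $N_{\lambda-jq}-N_{\lambda-(j+1)q}$ such elements, so the total is at least $a\sum_{j\ge1}N_{\lambda-jq}$.
\end{itemize}
This is exactly your displayed bound in step 3, and steps 4--5 then go through as written. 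The paper uses precisely this divisibility filtration to build a $(\lambda,v_0)$-basis type divisor $\Gamma\ge D$. It then concludes by contradiction via $v(\Gamma-D)\ge T(v_0;v)\,v_0(\Gamma-D)$. Your repaired argument packages the same count as the direct estimate $S(v_0;v)-A_X(v_0)\ge\frac{rk}{nq}\,b$.
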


\begin{proof}
    Up to scaling we may assume $A_X(v_0)=-1$, and put $c\coloneqq -A_{X, D}(v_0)$. As observed in Lemma~\ref{lem:lctgeld}, we always have $\lct(X, D; \chi_{v_0})\ge c$. Suppose the inequality is strict, namely, there is a valuation $v\in X^\val_\infty$ such that $-A_{X, D}(v)>c\FS(\chi_{v_0})(v)$, or equivalently, $A_X(v)<v(D)-c\FS(\chi_{v_0})(v)$. Pick $0<\varepsilon\ll 1$ such that 
    \[
    A_X(v)+\varepsilon<v(D)-c\FS(\chi_{v_0})(v).
    \]
    We will show that this is impossible if $v_0$ is K-semistable.
    The idea is to use basis type divisors to estimate $\FS(\chi_{v_0})(v)$. For each $\la>0$, we define a $(\la, v_0)$-basis type divisor to be an effective $\Q$-divisor of the form
    \[
    \Gamma\coloneqq \frac{1}{-\vol(\chi_{v_0}|_{\cF^\la_{v_0}R})}\sum_i \{f_i=0\},
    \]
    where $\{f_i\}$ is any $v_0$-orthogonal basis for $\cF^\la_{v_0}R$. For fixed $\la>0$, $v_0(\Gamma)=-1$ for any $(\la, v_0)$-basis type divisor $\Gamma$, and 
    \[
    v(\Gamma)\le -\frac{\vol(\chi_{v}|_{\cF^\la_{v_0}R})}{\vol(\chi_{v_0}|_{\cF^\la_{v_0}R})}=A_X(v_0)\frac{\vol_\la(v_0; v)}{\vol_\la(v_0; v_0)},
    \]
    with equality achieved when $\Gamma$ is also compatible with $v$. Now we may choose $\la\gg 0$ such that
    \[
    v(\Gamma)\le S(v_0; v)+\varepsilon\le A_X(v)+\varepsilon,
    \]
    where the second inequality follows from K-semistability.
    We claim that there is a $(\la, v_0)$-basis type divisor $\Gamma\ge D$. Granting the claim, write $\Gamma-D = D_1+D_2$, where $v(D_1)>0$ and $v(D_2)\le 0$, then 
    \[
    \frac{v(\Gamma-D)}{v_0(\Gamma-D)}=\frac{v(D_1)+v(D_2)}{v_0(D_1)+v_0(D_2)}\le \frac{v(D_2)}{v_0(D_2)}\le \FS(\chi_{v_0})(v),
    \]
    and
    \begin{align*}
        v(D)-c\FS(\chi_{v_0})(v)&=v(D)-(1+v_0(D))\FS(\chi_{v_0})(v) = v(D)-v_0(D-\Gamma)\FS(\chi_{v_0})(v)\\
        &\le v(D)+v_0(\Gamma-D)\frac{v(\Gamma-D)}{v_0(\Gamma-D)}=v(\Gamma)\le A_X(v)+\varepsilon.
    \end{align*}
    This gives a contradiction.

    It remains to find a basis type divisor $\Gamma\ge D$, and we may assume $D\ne 0$. 
    For each $\la>0$, define an $\N$-filtration $\cF$ on $V_\la\=\cF^\la_{v_0}R$ via
    \[
    \cF^jV_\la\coloneqq \{g\in V_\la\mid gf^j\in V_\la\}=V_{\la+jv_0(f)}.
    \]
    Pick a basis compatible with both $\cF$ and $v_0$, we get a basis type divisor $\Gamma$ satisfying $-\vol(\chi_{v_0}|_{\cF^\la_{v_0}R})\Gamma\ge \mu_\la\div(f)=k\mu_\la D$, where $\mu_\la=\sum_{j=1}^{\lfloor \frac{\la}{-v_0(f)}\rfloor}\dim \cF^jV_\la$. Note that 
    \[
    \lim_\la \frac{\mu_\la}{\la^{n+1}/(n+1)!} = \frac{\vol(v_0)}{-v_0(f)}, \quad \lim_\la \frac{-\vol(\chi_{v_0}|_{\cF^\la_{v_0}R})}{\la^{n+1}/(n+1)!}=n\vol(v_0),
    \]
    we have
    \[
    \lim_\la \frac{k\mu_\la}{-\vol(\chi_{v_0}|_{\cF^\la_{v_0}R})} = \frac{1}{-nv_0(D)}> \frac{1}{-A_X(v_0)}=1.
    \]
    Thus we have $\Gamma\ge D$ for $\la\gg 0$, and this completes the proof.
\end{proof}
\begin{rmk}\label{rmk:tylct}
    Let us mention that there are many more valuations computing its own lct. In fact, if $(\bar X, E)$ is a Tian--Yau type compactification and $v=\ord_E$, then $v$ computes $\lct(\chi_v)$:
    \[
    \lct(\chi_v)=\sup_w\frac{-A_X(w)}{w(E)} = \sup_w\frac{(1-A_X(v))w(E)-A_{\bar X}(w)}{w(E)}=1-A_X(v)-\lct(\bar X, E)=-A_X(v).
    \]
\end{rmk}

Recall that if $(X, x)$ is a klt singularity, then the minimizer of the normalized volume is the unique K-semistable valuation. When studying negative valuations, however, we do not have uniqueness in general. In fact, as observed in~\cite{GaborCY}, there are infinitely many K-semistable negative divisorial valuations on $X=\A^n$ for $n\ge 3$. Nevertheless, we will show that K-semistable negative valuations indeed give K-semistable degenerations when finitely generated. The goal is to prove the following more precise version of Theorem~B.
\begin{thm}\label{thm:qmvaluative}
    For any polarized affine Calabi--Yau variety $(X, v_0)$ we have 
    \[\min\{1,\d(X,v_0)\}\ge\d(X_{v_0},\xi_{v_0}),\]
    with equality if $v_0$ computes $\lct(\chi_{v_0})$.
    In particular, $(X,v_0)$ is K-semistable iff $(X_{v_0}, \xi_{v_0})$ is a K-semistable log Fano cone singularity.
\end{thm}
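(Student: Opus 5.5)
Both inequalities are compared through the reduction map $\chi\mapsto\widehat{\gr_{v_0}\chi}$ of Lemma~\ref{lem:STcomp}, which sends norms on $R$ to $\T$-invariant graded norms on the cone $X_{v_0}$. The second ingredient is the log discrepancy identity $-A_X(v_0)=A_{X_{v_0}}(\xi_{v_0})$ of Lemma~\ref{lem:ldeq}. Normalize $A_X(v_0)=-1$. For $v\in X^\val_\infty$ not proportional to $v_0$, Lemma~\ref{lem:STcomp}(i) turns the denominator of $\delta(X,v_0)$ into $S(\xi_{v_0};\widehat{\gr_{v_0}v})$. For the numerator, I would aim for the degeneration inequality
\[
A_X(v)+T(v_0;v)\ \ge\ A_{X_{v_0}}(\widehat{\gr_{v_0}v}).
\]
Here the right side is the log discrepancy of the ($\T$-invariant, possibly after replacing the initial norm by a valuation via lower semicontinuity/homogenization) induced norm on the cone. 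To prove it, I would first treat $v$ divisorial and $v_0$ divisorial. I would use the Tian--Yau compactification from Lemma~\ref{lem:divfg} and the Rees family $\tilde X\to\A^1$, which is plt along $X_{v_0}$. The valuation $v$ extends $\G_m$-invariantly to $\tilde X$, and its restriction to the central fiber is $\widehat{\gr_{v_0}v}$ up to the shift by $T(v_0;v)$. The inequality should then follow from adjunction, giving $A_{\tilde X,X_{v_0}}(\tilde v)\ge A_{X_{v_0}}(\tilde v|_{X_{v_0}})$, combined with $A_{\tilde X}(\tilde v)=A_X(v)+T(v_0;v)$. For general qm $v_0$, I would approximate by the divisorial $w$ of Lemma~\ref{lem:qmapprox} with $X_w\cong X_{v_0}$, passing to the limit via Lemma~\ref{lem:relvolcts} and the continuity in Proposition~\ref{lem:deltaapprox}. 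This yields $\delta(X,v_0)\ge\delta(X_{v_0},\xi_{v_0})$.

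The bound $\delta(X_{v_0},\xi_{v_0})\le 1$ is automatic for a cone (test $v=\xi_{v_0}$ itself), which gives the $\min\{1,\cdot\}$. I would organize the above via basis type divisors rather than one valuation at a time. By \eqref{equ:Sapprox} and the argument of Lemma~\ref{lem:ksslct}, $(\la,v_0)$-basis type divisors on $X$ degenerate to $(\la,\xi_{v_0})$-basis type divisors on $X_{v_0}$. This makes the comparison uniform in $v$.

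For the reverse inequality under the hypothesis that $v_0$ computes $\lct(\chi_{v_0})$, start from a $\T$-invariant valuation $u$ on $X_{v_0}$ centered at the vertex. I would produce valuations $v$ on $X$ with $\widehat{\gr_{v_0}v}$ close to $u$ and with equality $A_X(v)+T(v_0;v)=A_{X_{v_0}}(u)$. The natural construction is for $v_0$ divisorial: take the $\G_m$-invariant valuation on $\tilde X$ given by $u$ plus $t\cdot\ord_{X_{v_0}}$, and restrict it to the generic fiber. As $t\to\infty$, this recovers $u$ after reduction. The equality of log discrepancies should hold exactly when $v_0$ is an lc place computing $\lct(\chi_{v_0})$. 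Through Remark~\ref{rmk:tylct}, this computation amounts to the plt/lc property of $(\tilde X, X_{v_0})$ along the relevant center. This is the analog of an inversion of adjunction, as in \cite{AZ22} and Lemma~\ref{lem:lctadj}. For higher rank, I would again perturb to divisorial $w$ and invoke Proposition~\ref{lem:deltaapprox}.

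Finally, the ``in particular'' follows. If $(X_{v_0},\xi_{v_0})$ is K-semistable, then $\delta(X_{v_0},\xi_{v_0})=1$, so $\delta(X,v_0)\ge 1$. Conversely, if $v_0$ is K-semistable, Lemma~\ref{lem:ksslct} (with $D=0$) shows that $v_0$ computes $\lct(\chi_{v_0})$. Hence equality holds, and $\delta(X_{v_0},\xi_{v_0})=\min\{1,\delta(X,v_0)\}=1$.

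I expect the main obstacle to be the equality case: constructing the lifts $v$ and verifying the exact log discrepancy identity. The degeneration inequality is comparatively formal. For the equality case, I would first try the quasimonomial combination on the Rees family, controlled by inversion of adjunction for the plt pair $(\tilde X, X_{v_0})$.
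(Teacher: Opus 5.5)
Your outline for the first inequality and for the ``in particular'' is fine. The equality case, however, has a genuine gap when $v_0$ has rational rank $\ge 2$.

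You propose to pass to divisorial approximants $v_i\to v_0$ and invoke Proposition~\ref{lem:deltaapprox}. The divisorial case gives $\delta(X_{v_0},\xi_{v_i})=\min\{1,\delta(X,v_i)\}$, and Proposition~\ref{lem:deltaapprox} gives $\delta(X_{v_0},\xi_{v_i})\to\delta(X_{v_0},\xi_{v_0})$. So you would need $\liminf_i\delta(X,v_i)\ge\min\{1,\delta(X,v_0)\}$, and nothing you cite gives this. $\delta(X,\cdot)$ is an infimum over all $v\in X^\val_\infty$ of ratios that converge only pointwise in $v$ (Lemmas~\ref{lem:FScts} and~\ref{lem:relvolcts}). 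That yields only $\limsup_i\delta(X,v_i)\le\delta(X,v_0)$, which is exactly the direction you already have. Concretely, the perturbation error $r_iT(v_i;v)-r_0T(v_0;v)$ is only $o(1)\cdot T(v_0;v)$. Meanwhile the numerator $A_X(v)+r_0T(v_0;v)$ and the denominator $S(v_0;v)+r_0T(v_0;v)$ can both be arbitrarily small compared with $T(v_0;v)$, for instance at $v=v_i$. So no soft limiting argument closes this.

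The missing mechanism is also where the hypothesis $\lct(\chi_{v_0})=-A_X(v_0)$ actually enters.
\begin{itemize}
\item The paper never compares $\delta(X,v_i)$ with $\delta(X,v_0)$. It works at a fixed level $m'$ with $(m',\xi_{v_0})$-basis type divisors.
\item Using $T(v_0;v)\le T(v_0;v_i)T(v_i;v)$ and $A_X(v)+r_iT(v_i;v)=A_{\bar X_i,E_i}(v)$, the defining inequality of $\delta'=\min\{1,\delta(X,v_0)\}$ becomes $\delta'\varepsilon_mS_{m'}(\xi_{v_0};\cF_v)\le A_{\bar X_i,(1-t_i)E_i}(v)$, where $t_i=q_i(r_0T(v_0;v_i)-r_i)$.
\item The lct hypothesis, evaluated at $v=v_i$, gives $t_i\ge0$. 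So the entire error is absorbed as a coefficient defect on $E_i$.
\item The approximants are chosen by Diophantine approximation \cite{LX18} so that $q_i\de_\infty(v_i,v_0)\to0$ and $q_i(r_i-r_0)\to0$, hence $t_i\to0$. This matters because $t_i$ is measured against $\ord_{E_i}=q_iv_i$, so $\de_\infty(v_i,v_0)\to0$ alone is not enough.
\item The lifted basis type divisors have coefficients independent of $i$. Hence ACC for log canonical thresholds \cite{HMX} upgrades lc-ness of $(\bar X_i,(1-t_i)E_i+\cdots)$ to that of $(\bar X_i,E_i+\cdots)$ near $E_i$.
\item Adjunction to $E_i$ and Lemma~\ref{lem:lctadj} then bound $\delta_{m'}(X_{v_0},\xi_{v_0})$ from below.
\end{itemize}
None of these ingredients appears in your plan.

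You instead locate the lct hypothesis in the divisorial lifting step, where it is vacuous. By Remark~\ref{rmk:tylct} a divisorial $v_0$ always computes $\lct(\chi_{v_0})$, and Corollary~\ref{cor:divvaluative} gives equality there unconditionally.

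Two smaller points on your route to the first inequality.
\begin{itemize}
\item $\widehat{\gr_{v_0}v}$ is a graded norm, not a valuation. So its ``log discrepancy'' must be read as the lct of its ideal sequence, and you still need a lemma that $\delta(X_{v_0},\xi_{v_0})$ is bounded by $\lct/S(\xi_{v_0};\cdot)$ for $\T$-invariant graded norms.
\item Since $\tilde v(t)=T(v_0;v)$, the identity you need (normalizing $A_X(v_0)=-1$) is $A_{\tilde X,X_{v_0}}(\tilde v)=A_X(v)+T(v_0;v)$. The version you wrote, with $A_{\tilde X}$ in place of $A_{\tilde X,X_{v_0}}$, would let adjunction bound only $A_X(v)$, not $A_X(v)+T(v_0;v)$.
\end{itemize}
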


%
%
\subsection{The divisorial case}
Let $(\bar X, E)$ be an $n$-dimensional plt pair with $E$ a $\Q$-Cartier prime divisor and $-(K_{\bar X}+E)$ ample. Let $\Delta_E$ be such that $(K_{\bar X}+E)|_E=K_E+\Delta_E$, and assume $K_{\bar X}+(1+r)E\sim_\Q 0$ for some $r>0$. 

The following global result, analogous to~\cite[Theorem 3.6]{XZ21}, implies the divisorial case of Theorem~\ref{thm:qmvaluative},
see Corollary~\ref{cor:divvaluative}.

\begin{prop}\label{prop:stabadj}
    With the above setup, we have
    \[
    \delta(E,\Delta_E)
    =\frac{n}{r}\inf_v\frac{A_{\bar X,E}(v)}{(n+1)S(E;v)-v(E)},
    \]
    where the infimum is over all valuations $v\in \bar{X}^\val$ with $0<A_{\bar X,E}(v)<\infty$, and having center on $E$.
\end{prop}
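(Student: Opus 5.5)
The plan is to prove the two inequalities separately. The main tool is the Abban--Zhuang refinement of $R(\bar X,\cO_{\bar X}(E))$ by $\ord_E$, together with inversion of adjunction along $E$. Since $-(K_{\bar X}+E)\sim_\Q rE$ is ample, $E$ is ample. By Kawamata--Viehweg vanishing (as in Lemma~\ref{lem:divfg}) the restriction maps $H^0(\bar X,(m-j)E)\to H^0(E,(m-j)E|_E)$ are surjective. So the refined linear series of $\cO_{\bar X}(E)$ by $E$ is $W_{m,j}=H^0(E,(m-j)E|_E)$, i.e.\ the full section ring of $E|_E\sim_\Q -\frac1r(K_E+\Delta_E)$. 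Note also that $\vol(E-tE)=(1-t)^n\vol(E)$, so $S(E;\ord_E)=\frac{1}{n+1}$ and $S_m(E;\ord_E)\to\frac1{n+1}$. Write $\mathrm{RHS}$ for the infimum in the statement times $\frac nr$.

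\textbf{Step 1: $\delta(E,\Delta_E)\le\mathrm{RHS}$.} Fix $v$ with center on $E$ and $0<A_{\bar X,E}(v)<\infty$.
\begin{itemize}
\item Take an $m$-basis type divisor $D_m$ of $\cO_{\bar X}(E)$ compatible with both $\ord_E$ and $v$, using a common orthogonal basis. Then $D_m=S_m(E;\ord_E)\,E+\Gamma_m$, where $E\not\subset\supp\Gamma_m$, and $v(D_m)=S_m(E;v)$.
\item By compatibility with $\ord_E$, $\Gamma_m|_E$ is a basis type divisor of the refinement $W$. Up to an explicit constant $c_m$, it is an $m$-basis type divisor of $(E,-(K_E+\Delta_E))$ in the sense of~\cite{BlJ}. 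From the weights $(m-j)$ in the refinement and the normalization $rE|_E\sim_\Q-(K_E+\Delta_E)$, one expects $c_m\to \frac{n+1}{n}r$. This constant has to be checked by the same Riemann--Roch sum as in Lemma~\ref{lem:lctadj}.
\item Hence $(E,\Delta_E+\lambda\Gamma_m|_E)$ is lc for $\lambda=\delta_m(E,\Delta_E)/c_m$. Inversion of adjunction then shows that $(\bar X,E+\lambda\Gamma_m)$ is lc near $E$, so $A_{\bar X,E}(v)\ge\lambda v(\Gamma_m)=\lambda\big(S_m(E;v)-S_m(E;\ord_E)v(E)\big)$.
\item Letting $m\to\infty$ and using the uniform approximation~\eqref{equ:Sapprox} gives
\[
A_{\bar X,E}(v)\ge \delta(E,\Delta_E)\,\tfrac{r}{n}\big((n+1)S(E;v)-v(E)\big),
\]
which is the claimed inequality.
\end{itemize}

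\textbf{Step 2: $\mathrm{RHS}\le\delta(E,\Delta_E)$.}
\begin{itemize}
\item For a divisorial valuation $u$ on $E$, construct quasimonomial valuations $w_t$ on $\bar X$ with center on $E$, for $t>0$. Near the generic point of $c_E(u)$, use that $(\bar X,E)$ is plt: pass to a log resolution where $E$ and a divisor $F$ over $\bar X$ restricting to $u$ meet transversally, and let $w_t$ be monomial with weight $1$ on $E$ and weight $t$ on $F$.
\item Then $w_t(E)=1$. By adjunction $A_{\bar X,E}(\ord_E)=0$, which should give $A_{\bar X,E}(w_t)=t\,A_{E,\Delta_E}(u)$ on a suitable model. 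Making this precise is where plt and the choice of $F$ enter.
\item For $s\in H^0(\bar X,mE)$ with $\ord_E$-leading term $\bar s\in W_{m,j}$, one has $w_t(s)\ge j+t\,u(\bar s)$. Using a basis compatible with the $\ord_E$-filtration, this yields $S(E;w_t)\ge S(E;\ord_E)+t\,S_W(u)$. Here $S_W(u)$ is the refined $S$-invariant, equal to $c^{-1}S(-(K_E+\Delta_E);u)$ with $c=\frac{n+1}{n}r$, the limit of the constants $c_m$ in Step 1.
\item Substituting, the terms $(n+1)S(E;\ord_E)=1=w_t(E)$ cancel in the denominator. One obtains
\[
\frac nr\frac{A_{\bar X,E}(w_t)}{(n+1)S(E;w_t)-w_t(E)}\le \frac{A_{E,\Delta_E}(u)}{S(-(K_E+\Delta_E);u)},
\]
and taking the infimum over $u$ gives the reverse inequality.
\end{itemize}

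I expect the main obstacle to be Step 2: constructing $w_t$ with the exact log discrepancy identity $A_{\bar X,E}(w_t)=tA_{E,\Delta_E}(u)$, where the different $\Delta_E$ must be accounted for, and justifying the lower bound on $S(E;w_t)$ through the refinement. If the direct construction is awkward, I would instead degenerate $\bar X$ to the projective orbifold cone over $(E,E|_E)$ via the finitely generated $\ord_E$-filtration. There the $w_t$ are explicit $\G_m$-invariant valuations, and $A$ and $S$ can be computed on the cone. The second, bookkeeping, difficulty is pinning down the constant $\frac{n+1}{n}r$ in both steps consistently.
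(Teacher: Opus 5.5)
The genuine gap is in Step~2, at the estimate $S(E;w_t)\ge S(E;\ord_E)+t\,S_W(u)$.

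\emph{The pointwise bound fails.} The bound $w_t(s)\ge j+t\,u(\bar s)$ is false for a general lift, and a basis compatible with the $\ord_E$-filtration does not control the lifts. Suppose $s'\in H^0(\bar X,mE)$ has $\ord_E(s')=0$ and $u(s'|_E)=N>1/t$; this happens for every fixed $t$ once $m\gg0$. Take $h\in H^0(\bar X,(m-1)E)$ general, nonvanishing at the center of $w_t$. Then $s=s'+s_Eh$ is another lift of the same $\bar s$, with $w_t(s)\le 1<t\,u(\bar s)$. Locally this is $y^N+x$ with $E=\{x=0\}$ and $F=\{y=0\}$.

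\emph{The true inequality goes the other way.} For \emph{every} lift one has $w_t(s)\le j+t\,u(\bar s)$. This is because the monomials of $x$-degree exactly $j$ in the expansion of $s$ at the generic point of $E\cap F$ are those of $\bar s$. Hence $\gr_{\ord_E}\cF_{w_t}$ sits inside the filtration of $W_\bullet$ by $j+tu$. Counting dimensions gives $S(E;w_t)\le S(E;\ord_E)+t\,S_W(u)$, which is the opposite of what you need to bound the infimum from above.

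Equality would need lifts with $w_t(s)\ge j+t\,u(\bar s)$ for a whole basis. That is essentially saying $\bar X$ behaves like its degeneration to the cone over $E$ with respect to $w_t$, and nothing in your argument provides it. A first-order expansion as $t\to0$ would also suffice, but you do not provide that either. Your fallback of computing on the cone does not close the gap. The infimum is over valuations on $\bar X$, and transporting cone valuations back with control of $A$ and $S$ is exactly the missing step. Separately, $w_t(E)=1$ requires the center of $F$ on $\bar X$ not to lie in $E$.

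\emph{How the paper handles this.} The paper never constructs such valuations. With $W_\bullet$ the refinement by $E$, it proves the level-$m$ identity $\delta_m(E,\Delta_E;W_\bullet)=\inf_v A_{\bar X,E}(v)/\bigl(S_m(E;v)-S_m(E;\ord_E)\,v(E)\bigr)$ and reads off both inequalities from it. The paper normalizes $L=\frac{n+1}{n}rE$, which only rescales. Your Step~1 is the half of this that uses inversion of adjunction. For the other half:
\begin{enumerate}
\item Take an arbitrary $m$-basis type divisor of $W_\bullet$. Using the surjectivity you noted, lift it to $\tilde D$ such that $S_m(E;\ord_E)E+\tilde D$ is an $m$-basis type divisor of $\cO_{\bar X}(E)$ compatible with $E$.
\item Ordinary adjunction gives $\lct(E,\Delta_E;\tilde D|_E)\ge\lct_E(\bar X,E;\tilde D)=\inf_v A_{\bar X,E}(v)/v(\tilde D)$.
\item For \emph{every} $v$ one has $v(\tilde D)\le S_m(E;v)-S_m(E;\ord_E)v(E)=S_m(W_\bullet;\cF_v)$.
\item The uniform estimate $S(W_\bullet;\cF_v)\ge\varepsilon_m S_m(W_\bullet;\cF_v)$, with $\varepsilon_m\nearrow1$ independent of $v$, then finishes the argument.
\end{enumerate}

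\emph{Step~1 also needs a repair.} $\Gamma_m|_E$ is not a multiple of a single $m$-basis type divisor of $-(K_E+\Delta_E)$ in the sense of Blum--Jonsson. It mixes basis type divisors of $(m-j)E|_E$ for all $0\le j\le m$. So the right threshold is $\delta_m(E,\Delta_E;W_\bullet)$, not $\delta_m(E,\Delta_E)$. To pass to the limit you need the Abban--Zhuang input: $W_\bullet$ is almost complete, $c_1(W_\bullet)\sim_\Q\frac{n}{n+1}E|_E$, and $S(W_\bullet;\cdot)=S(c_1(W_\bullet);\cdot)$. Together these give $\lim_m\delta_m(E,\Delta_E;W_\bullet)=\frac{(n+1)r}{n}\,\delta(E,\Delta_E)$ in your normalization $L=\cO_{\bar X}(E)$. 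So $\lambda$ must be $\delta_m$ \emph{times} this constant, not divided by it. Your final display is correct, but it does not follow from $\lambda=\delta_m/c_m$.
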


\begin{proof}
    As in~\cite{AZ22} we use adjunction, and we freely borrow definitions and results from \textit{loc. cit}. Set $L:=\frac{n+1}{n}rE$ and consider the bigraded linear series $W_\bullet=\{W_{m,j}\}$ for $L|_E$, given by
    \[
    W_{m,j}:=\im(H^0(\bar X,mL-jE)\to H^0(E,mL|_E-jE|_E)).    
    \]
    Let us first prove that 
    \begin{equation}\label{equ:twodeltas1}
    \d:=\d(E,\Delta_E)=\delta(E,\Delta_E;W_\bullet).
    \end{equation}
    Here $\d(E,\Delta_E;W_\bullet)$ is the limit of $\d_m(E,\Delta_E;W_\bullet)$, the infimum of $\lct(E,\Delta_E;\Gamma)$ over all $m$-basis type $\Q$-divisors $\Gamma$ of $W_\bullet$.

    By~\cite[Example 2.12]{AZ22}, $W_\bullet$ is almost complete with respect to $E|_E$, with fixed part $F(W_\bullet)=0$, and by~(3.1) in \emph{loc.\ cit.} the divisor class of $W_\bullet$ equals
    \[
    c_1(W_\bullet)=(L-S(L; E)E)|_E\sim_\Q (L-\frac rn E)|_E\sim_\Q rE|_E\sim_\Q -(K_E+\Delta_E),
    \]
    where we have used $S(E; E)=\frac{1}{n+1}$. By~\cite[Lemma 2.13]{AZ22} we therefore have  $S(W_\bullet; w)=S(-(K_E+\Delta_E); w)$ for any linearly bounded valuation $w$ on $E$, and this implies~\eqref{equ:twodeltas1} in view of the valuative formulas for the stability thresholds, see~\cite[Lemma~2.9]{AZ22}.

    Now set $\d_m:=\delta_m(E,\Delta_E;W_\bullet)$ and $\d=\d(E,\Delta_E;W_\bullet)$. We must prove that \begin{equation}\label{equ:threshold}
      \d=\inf_v\frac{A_{\bar X,E}(v)}{S(L;v)-\frac{r}{n}v(E)}=:\kappa.
    \end{equation}

    Given any $m$-basis type $\Q$-divisor $D$ for $L$ compatible with $E$, we can write 
    \[
    D=S_m(L;E)E+\tilde{D},
    \]
    where $\tilde{D}$ is an effective $\Q$-divisor on $\bar X$ with $\ord_E(\tilde D)=0$. Then $\tilde D|_E$ is an $m$-basis type $\Q$-divisor for $W_\bullet$. Conversely, any $m$-basis $\Q$-divisor for $W_\bullet$ is of this form. Indeed, for each $m$ and $j$ with $W_{m,j}\ne 0$, we may lift a basis of $W_{m,j}$ to sections $\{s_{j,i}\}$ of $H^0(\bar X, mL-jE)$ with $\ord_E(s_{j,i})=0$, since $H^0(\bar X, kE)\twoheadrightarrow H^0(E, kE|_E)$ for any $k>0$. This gives rise to a linearly independent set $\{s_{j,i}s_E^j\}$ in $H^0(\bar X, mL)$ compatible with $E$. Taking the union over all $j$, we get a basis of $H^0(\bar X, mL)$ compatible with $E$.

This yields
    \begin{align*}
    \d_m
    &=\inf_D\lct(E,\Delta_E;\tilde D|_E)
    =\inf_D\lct_E(\bar X,E;\tilde D)\\
    &=\inf_D\inf_v\frac{A_{\bar X,E}(v)}{v(\tilde D)}
    =\inf_v\inf_D\frac{A_{\bar X,E}(v)}{v(\tilde D)},
    \end{align*}
    where the second equality follows from inversion of adjunction.
    Now, for fixed $v$, we have 
    \[
    v(\tilde D)=v(D)-S_m(L; E)v(E)\le S_m(L;v)-S_m(L;E)v(E),
    \]
    with equality if $D$ is compatible with both $E$ and $v$. 
    After interchanging the inf in $\delta_m$, it follows that 
    \[
    \d_m=\inf_v\frac{A_{\bar X,E}(v)}{S_m(L;v)-S_m(L;E)v(E)}.
    \]
    By uppersemicontinuity, we have $\delta=\limsup_m\delta_m\le \kappa$.
    On the other hand, if we write $\cF_v$ for the induced filtration on $E$, then $S_m(W_\bullet; \cF_v)\ge v(\tilde D)$ by submultiplicativity, with equality for $D$ compatible with both $E$ and $v$. Hence we have $S_m(W_\bullet; \cF_v)=\sup_D v(\tilde D)$.     
    As in~\cite{AZ22} (see~\cite[Corollary~2.10]{BlJ}) there exist $\e_m\nearrow1$ such that $S(W_\bullet;\cF_v)\ge\e_m S_m(W_\bullet;\cF_v)$ for all $v$.  
    This gives $S(L; v)-S(L; E)v(E)=S(W_\bullet; \cF_v)\ge \varepsilon_mS_m(W_\bullet; \cF_v)=\varepsilon_m(S_m(L; v)-S_m(L;E)v(E))$ for all $v$.
    Letting $m\to\infty$, we therefore get
    \[
    \d=\inf_v\frac{A_{\bar X,E}(v)}{S(L;v)-S(L;E)v(E)},
    \]    
    which equals $\kappa$, since $S(L;E)=\frac{r}{n}$.
  \end{proof}
\begin{cor}\label{cor:divvaluative}
    If $(X,v_0)$ is a polarized affine Calabi--Yau variety, with $v_0$ divisorial, then
    \begin{equation}\label{equ:twodeltas2}
    \min\{1, \d(X,v_0)\}=\d(X_{v_0}, \xi_{v_0}).
    \end{equation}
    In particular, $(X,v_0)$ is K-semistable iff $(X_{v_0}, \xi_{v_0})$ is K-semistable.
\end{cor}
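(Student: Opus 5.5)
The plan is to reduce everything to Proposition~\ref{prop:stabadj} applied to a Tian--Yau type compactification. First, every quantity in $\delta(X,v_0)$ and in $\delta(X_{v_0},\xi_{v_0})$ is unchanged when $v_0$ is rescaled, so we may assume $v_0=\ord_E$. By Lemma~\ref{lem:divfg} (with $D=0$) there is a Tian--Yau type compactification $(\bar X,E)$ with $E=\bar X\setminus X$, $(\bar X,E)$ plt and $-(K_{\bar X}+E)$ ample. Set $r\coloneqq -A_X(E)>0$. Then $K_{\bar X}+(1+r)E\sim_\Q 0$, and Lemma~\ref{lem:divfg} also gives $A_{X_{v_0}}(\xi_{v_0})=r$.

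The degeneration $X_{v_0}=\Spec\bigoplus_m H^0(E,mE|_E)$ is the orbifold cone over the log Fano pair $(E,\Delta_E)$, and $\xi_{v_0}$ is rational. So the observation of~\cite{LW} recalled in \S\ref{sec:locKstab} gives
\[
\delta(X_{v_0},\xi_{v_0})=\min\{1,\delta(E,\Delta_E)\}.
\]
It therefore suffices to show $\delta(X,v_0)=\delta(E,\Delta_E)$.

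Next, rewrite the invariants on $X$ in terms of $\bar X$. Let $v\in X^\val_\infty$.
\begin{itemize}
\item By \eqref{equ:FSgeom}, $T(v_0;v)=v(E)$.
\item Using $A_X(v)=A_{\bar X}(v)-(1+r)v(E)$, we get $A_X(v)=A_{\bar X,E}(v)-rv(E)$.
\item By Example~\ref{exam:globalS} with $D=E$, $\vol(v_0;v)=S(E;v)-v(E)$, hence $S(v_0;v)=\frac{(n+1)r}{n}\bigl(S(E;v)-v(E)\bigr)$.
\end{itemize}
Substituting these, the numerator in the definition of $\delta(X,v_0)$ becomes $A_X(v)+rv(E)=A_{\bar X,E}(v)$. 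The denominator becomes
\[
\tfrac{(n+1)r}{n}\bigl(S(E;v)-v(E)\bigr)+rv(E)=\tfrac rn\bigl((n+1)S(E;v)-v(E)\bigr).
\]
Hence each ratio equals $\frac nr\,\frac{A_{\bar X,E}(v)}{(n+1)S(E;v)-v(E)}$, which is exactly the expression in Proposition~\ref{prop:stabadj}.

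The remaining point, which I expect to need the most care, is matching the two index sets of the infima.
\begin{itemize}
\item Any $v\in X^\val_\infty$ has center in $\partial\bar X=E$, so the centers agree.
\item Finite log discrepancy on $X$ is equivalent to $A_{\bar X,E}(v)<\infty$, since the two differ by the finite quantity $rv(E)$.
\item Since $(\bar X,E)$ is plt, $A_{\bar X,E}(v)=0$ forces $v\in\R_{>0}\ord_E$. So excluding $v\in\R_{>0}v_0$ on the $X$ side corresponds exactly to requiring $A_{\bar X,E}(v)>0$.
\item Conversely, valuations on $\bar X$ centered on $E$ are centered at infinity for $X$.
\end{itemize}
One also checks that the denominators are positive. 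The $X$-side denominator equals $S(\xi_{v_0};\widehat{\gr_{v_0}v})$ by Lemma~\ref{lem:STcomp}(i), and is nonnegative there. Alternatively, it is $\frac rn$ times $(n+1)S(E;v)-v(E)$, which equals $(n+1)\bigl(S(E;v)-S(E;E)v(E)\bigr)\cdot\frac{n}{n+1}\cdot\frac{n+1}{n}$ up to the harmless normalization used in the proof of Proposition~\ref{prop:stabadj}, where this quantity is identified with the $S$-invariant of the induced filtration on $E$.

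With the index sets and signs matched, Proposition~\ref{prop:stabadj} gives $\delta(X,v_0)=\delta(E,\Delta_E)$, and hence \eqref{equ:twodeltas2}. Finally, $(X,v_0)$ is K-semistable iff $\delta(X,v_0)\ge 1$, iff $\delta(E,\Delta_E)\ge1$, iff the cone $(X_{v_0},\xi_{v_0})$ is K-semistable, because $\delta(X_{v_0},\xi_{v_0})=\min\{1,\delta(E,\Delta_E)\}$.
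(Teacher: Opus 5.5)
Your proposal is correct and follows the paper's proof essentially step for step. Like the paper, you reduce to $v_0=\ord_E$ on the Tian--Yau (Proj) compactification, use $\delta(X_{v_0},\xi_{v_0})=\min\{1,\delta(E,\Delta_E)\}$, and rewrite the ratio defining $\delta(X,v_0)$ via $A_{\bar X,E}(v)=A_X(v)+rv(E)$, $T(v_0;v)=v(E)$ and $S(E;v)=\vol(v_0;v)+v(E)$, so that it matches Proposition~\ref{prop:stabadj}. Your explicit matching of the index sets (centers on $E$, finite log discrepancy, and plt-ness turning $v\notin\R_{>0}v_0$ into $A_{\bar X,E}(v)>0$) is a correct detail that the paper leaves implicit.
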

\begin{proof}
    By scaling invariance, we may assume $v_0=\ord_E$ for some prime divisor $E$.
    The projective variety $\bar X = \Proj \bigoplus_m \cF_{v_0}^mR$ is a compactification of $X$, and if we set $E:=\bar X\setminus X$, then $v_0=\ord_E$, and the pair $(\bar X, E)$ satisfies the conditions before Proposition~\ref{prop:stabadj}, with $r=-A_X(v_0)$.
    The log Fano cone $(X_{v_0},\xi_{v_0})$ is the cone over the log Fano pair $(E,\Delta_E)$, and $\d(X_{v_0},\xi_{v_0})=\min\{1,\d(E,\Delta_E)\}$. It therefore suffices to show that $\d(E,\Delta_E)=\d(X,v_0)$.

    By Proposition~\ref{prop:stabadj}, we have 
    \[
    \d(E,\Delta_E)
    =\inf_v\frac{n}{r}\frac{A_{\bar X,E}(v)}{(n+1)S(E;v)-v(E)}.
    \]
    Here $A_{\bar X,E}(v)=A_X(v)+rv(E)$, whereas $S(E;v)=\vol(v_0;v)+v(E)$, see Example~\ref{exam:globalS}.
    As $A_X(v_0)=-r$, and $T(v_0,v)=v(E)$ by~\eqref{equ:FSgeom}, we further have 
    \[
    \frac{(n+1)r}{n}S(E; v)-\frac rn v(E)=S(v_0; v)+rT(v_0; v).
    \]
    So it follows that 
    \[
    \d(E,\Delta_E)=\inf_v\frac{A_X(v)-A_X(v_0)T(v_0;v)}{S(v_0;v)-A_X(v_0)T(v_0;v)}
    =\d(X,v_0),
    \]
    which completes the proof.
\end{proof}

%
%
\subsection{The general case}

We are now ready to complete the proof of Theorem~\ref{thm:qmvaluative}.
\begin{proof}[Proof of Theorem~\ref{thm:qmvaluative}]
    The divisorial case is treated in Corollary~\ref{cor:divvaluative}.
    We may assume $v_0$ is not divisorial. 
    
    We first set up some notation. By Lemma~\ref{lem:qmapprox} we can find divisorial valuations with $\de_\infty(v_i, v_0)\to 0$ with $X_{v_i}\cong X_{v_0}$ for all $i$. By construction, $X_{v_0}$ is a log Fano cone singularity with $-r_0\=A_X(v_0)<0$.
    For each $i$, pick $q_i\in \Q_{>0}$ such that $\ord_{E_i}=q_iv_i$ for some prime divisor $E_i$, and let $(\bar X_i, E_i)$ be the corresponding Tian--Yau type compactifications with $q_ir_i=-q_iA_X(v_i)=-A_X(E_i)$ and let $L_i=\frac{-(n+1)A_X(E_i)}{n}E_i$. By~\cite[Lemma~2.7]{LX18} (see also~\cite[Lemma~3.1]{LXZ}, we can choose $v_i$ and $q_i$ so that $q_i\de_\infty(v_i, v_0)\to 0$, and $q_ir_i-q_ir_0\to 0$. In particular, this implies $q_ir_0T(v_0; v_i)-q_ir_i\to 0$.
    In what follows, we will fix this sequence $v_i\to v_0$ and denote by $\xi_{v_i}$ and $\xi_{v_0}$ the Reeb fields on $X_{v_0}$ with respect to $v_i$ and $v_0$ respectively. Without loss of generality, we may pick the sequence with $v_i\ge v_0$ so that $\xi_{v_i}\le \xi_{v_0}$. 
    Set $\delta_i=\delta(E_i, \Delta_{E_i}) = \delta(X_{v_0}, \xi_{v_i})$ and $\delta=\delta(X_{v_0}, \xi_{v_0})$.

    We first show $\min\{1, \delta(X, v_0)\}\ge \delta$. Fix a valuation $v\in X^\val_\infty$ with finite log discrepancy. By Corollary~\ref{cor:divvaluative}, for each $i$, we have
    \[
    A_{\bar X_i, E_i}(v)\ge \delta_i(S(L_i; v)-\frac{r_i}{n}v(E_i)), 
    \]
    or equivalently,
    \[
    A_X(v)\ge \delta_i (S(L_i; v)-\frac{q_ir_i}{n}v(E_i))-q_ir_iv(E_i) = \delta_iS(v_i; v)+(\delta_i-1)r_iT(v_i; v).
    \]
    By Lemma~\ref{lem:FScts}, Lemma~\ref{lem:relvolcts} and Proposition~\ref{lem:deltaapprox}, we may take $i\to \infty$ and get
    \[
    A_X(v)+r_0T(v_0; v)\ge \delta(S(v_0; v)+r_0T(v_0; v)),
    \]
    which implies $\delta(X, v_0)\ge \delta$ since $\delta\le 1$.

    Conversely, we need to show $\delta \ge \delta'\=\min\{1, \delta(X, v_0)\}$ when $\lct(\chi_{v_0})=-A_X(v_0)$. Set $\delta_m = \delta_m(X_{v_0}, \xi_{v_0})$. We will show $\lim_m\delta_m\ge \delta'$. 
    As described in Lemma~\ref{lem:STcomp}, any $v\in X^\val_\infty$ induces a (shifted) filtration $\cF_v$ on $\gr_{v_0}R$ with
    \[
    S(v_0; v)+r_0T(v_0; v)=S(\xi_{v_0}; \cF_v).
    \]
    Using~\eqref{equ:Sapprox}, for $m'=\lfloor \frac{(n+1)r_0}{n}m\rfloor$,
    there exists $\varepsilon_m\nearrow 1$ such that for any $v\in X^\val_\infty\setminus\R_{>0}v_0$, we have
    \[
    \delta'\varepsilon_mS_{m'}(\xi_{v_0}; \cF_v)\le \delta'S(\xi_{v_0}; \cF_v)=\delta'(S(v_0; v)+r_0T(v_0; v))\le A_X(v)+r_0T(v_0; v),
    \]
    where the last inequality follows from the definition of $\delta'$. Note that for any $i$, we have $r_0T(v_0; v)\le \frac{r_0}{r_i}T(v_0; v_i)r_iT(v_i;v)$, using $A_X(v)+r_iT(v_i; v)=A_{\bar X_i, E_i}(v)$, we may rewrite the above inequality as
    \[
    \delta'\varepsilon_mS_{m'}(\xi_{v_0}; \cF_v)\le A_{\bar X_i, (1-t_i)E_i}(v),
    \]
    where $t_i=q_i(r_0T(v_0; v_i)-r_i)\ge 0$ 
    since $\lct(\chi_{v_0})=-A_X(v_0)$. For fixed $m'$, after taking $i$ large enough, any $(m', \xi_{v_0})$-basis type divisor $D_{m'}(\xi_{v_0})$ is a multiple of an $(m', \xi_{v_i})$-basis type divisor when $R_{m'}(\xi_{v_0})=R_{m'}(\xi_{v_i})$. More precisely, for our choices of $\xi_{v_i}$, if we write $\xi_{v_i}\le \xi_{v_0}\le c_i\xi_{v_i}$ for $1\le c_i\to 1$, then $R_{m'}(\xi_{v_0})\subseteq R_{m'}(\xi_{v_i})\subseteq R_{c_im'}(\xi_{v_0})$. Thus for $i\ge i(m)$ large enough so that $c_im'$ is not in the value group of $\xi_{v_0}$, we have $R_{m'}(\xi_{v_i})=R_{m'}(\xi_{v_0})$ and
    \[
    D_{m'}(\xi_{v_0}) = \frac{(n+1)r_0}{m'N_{m'}}\sum_{j=1}^{N_{m'}}\{f_j=0\}=\frac{r_0}{r_i}D_{m'}(\xi_{v_i}),
    \] 
    where $D_{m'}(\xi_{v_i})$ is an $(m', \xi_{v_i})$-basis type divisor.
    Since $S_{m'}(\xi_{v_i}; \cF_v)\le S_{m'}(\xi_{v_0}; \cF_v)$ can be computed using $(m', \xi_{v_0})$-basis type divisors, the above inequality implies, for the above choice of $i$, for all $(m', \xi_{v_0})$-basis type divisors $D_{m'}(\xi_{v_0})$, we have $(\bar X_i, (1-t_i)E_i+\delta'\varepsilon_m\frac{r_0}{r_i}\tilde D_{m'}(\xi_i))$ is lc near $E_i$, for any lift of $D_{m'}(\xi_{v_0})$ to an $m'$-basis type divisor of $(\bar X_i, L_i)$ compatible with $E_i$, with $\tilde D_m(\xi_{v_i})|_{E_i}=D_m(\xi_{v_i})$. Note that the coefficients of $\varepsilon_m\frac{r_0}{r_i}\tilde D_{m'}(\xi_{v_i})$ is by construction independent of $i$, by the ACC of log canonical thresholds (see~\cite{HMX}), there exists $\varepsilon>0$ only depending on $\dim \bar X_i=\dim X$ and the coefficients of $\varepsilon_m\frac{r_0}{r_i}\tilde D_{m'}(\xi_{v_i})$, such that for any reduced divisor $G$ on $\bar X_i$, if $(\bar X_i, (1-\varepsilon)G+\delta'\varepsilon_m\frac{r_0}{r_i}\tilde D_{m'}(\xi_{v_i}))$ is lc, then so is $(\bar X_i, G+\delta'\varepsilon_m\frac{r_0}{r_i}\tilde D_{m'}(\xi_i))$. In particular, after passing to even larger $i$ such that $t_i<\varepsilon$, we have $(\bar X_i, E_i+\delta'\varepsilon_m\frac{r_0}{r_i}\tilde D_{m'}(\xi_i))$ is also lc near $E_i$. Now by adjunction, this implies $(E_i, \delta'\varepsilon_m\frac{r_0}{r_i}D_{m'}(\xi_i))$ is lc, and hence by Lemma~\ref{lem:lctadj} we have 
    \[
    \lct_x(X_{v_0},\Delta_{v_0};D_{m'}(\xi_{v_0}))\ge \min\left\{\frac{A_{X_{v_0},\Delta_{v_0}}(\xi_{v_i})}{v_{\xi_i}(D_{m'}(\xi_{v_0}))}, \d'\varepsilon_m\right\}\ge \min\{\frac{A_{X_{v_0},\Delta_{v_0}}(\xi_{v_i})}{S_{m'}(\xi_{v_0}; \xi_{v_i})}, \d'\varepsilon_m\}.
    \]
    Since $\frac{A_{X_{v_0},\Delta_{v_0}}(\xi_{v_i})}{S_{m'}(\xi_{v_0}; \xi_{v_i})}\to \frac{A_{X_{v_0},\Delta_{v_0}}(\xi_{v_0})}{S_{m'}(\xi_{v_0}; \xi_{v_0})}$ as $i\to\infty$, after passing to even larger $i$, we may assume $\frac{A_{X_{v_0},\Delta_{v_0}}(\xi_{v_i})}{S_{m'}(\xi_{v_0}; \xi_{v_i})}\ge \varepsilon_m \frac{A_{X_{v_0},\Delta_{v_0}}(\xi_{v_0})}{S_{m'}(\xi_{v_0}; \xi_{v_0})}$.    
    Since this is true for any basis type divisors, we have shown 
    \[\delta_{m'}\ge \varepsilon_{m}\min\{\frac{A_{X_{v_0}}(\xi_{v_0})}{S_{m'}(\xi_{v_0}; \xi_{v_0})},\d'\}\] 
    and we are done by letting $m\to \infty$.
\end{proof}

%
%
\subsection{Finite generation of K-semistable valuations}
Even though K-semistable valuations are not unique at infinity, one might still hope that they are finitely generated. However, even the divisorial case seems mysterious. On the other hand, similar to the geometric setting~\cite{TY2}, if there is a Tian--Yau type compactification close enough to the K-semistable valuation, then we will be able to perturb the compactification to the K-semistable one. More precisely, the following result implies Theorem~C.
\begin{thm}\label{thm:kssfg}
    Let $v_0$ be a K-semistable negative quasimonomial valuation, and suppose there is a Tian--Yau type compactification $(\bar X, F)$ such that
    \[
    T(v_0; F)T(F; v_0)\le 1-A_X(F)^{-1}.
    \]
    Then $v_0$ is finitely generated.
\end{thm}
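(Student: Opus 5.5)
We prove Theorem~\ref{thm:kssfg} by verifying condition~(2) of Theorem~\ref{thm:kltfg}: we exhibit a special $\Q$-complement $\Gamma$ on the given Tian--Yau type compactification $(\bar X, F)$ having $v_0$ as a monomial lc place. Theorem~\ref{thm:kltfg} then gives finite generation with klt degeneration.

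\textbf{Step 1: the lc threshold computation.} First we show that $v_0$ computes $\lct(\chi_{v_0})=-A_X(v_0)$. We apply Lemma~\ref{lem:ksslct} with $D$ a small general divisor, for instance $D=\varepsilon\{f=0\}$ with $f$ general in some $\cF^m_F R$. Letting $\varepsilon\to 0$ gives $\lct(\chi_{v_0})=-A_X(v_0)$. We also need a version with boundary: for $\Delta=c\{f=0\}$ with $v_0(\Delta)>A_X(v_0)/n$, Lemma~\ref{lem:ksslct} gives $\lct(X,\Delta;\chi_{v_0})=-A_{X,\Delta}(v_0)$. Concretely, for all $v\in X^\val_\infty$,
\[
A_{X,\Delta}(v)+\bigl(-A_{X,\Delta}(v_0)\bigr)T(v_0;v)\ge 0,
\]
with equality at $v=v_0$.

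\textbf{Step 2: converting to a statement on $\bar X$.} We normalize $v_0$ and $\ord_F$ and use $A_X(v)=A_{\bar X,F}(v)-(-A_X(F))\,v(F)$ together with $T(F;v)=v(F)$ from~\eqref{equ:FSgeom}. The inequality of Step~1 then reads
\[
A_{\bar X,F+\Delta}(v)\ge r_F\,v(F)-c_0\,T(v_0;v),\qquad r_F=-A_X(F),\quad c_0=-A_{X,\Delta}(v_0).
\]
To make the right side nonpositive we compare with $T(v_0;v)\ge T(F;v_0)^{-1}T(F;v)$; this inequality comes from submultiplicativity of $T$. The quantitative hypothesis $T(v_0;F)T(F;v_0)\le 1-A_X(F)^{-1}$ is designed so that, after choosing $\Delta\sim_\Q$ (a suitable multiple of) $F$ with prescribed $v_0(\Delta)$ (a general member of $|mF|$, so $v_0(\Delta)=-m\,T(v_0;F)$ up to scaling), the coefficient works out. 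The outcome should be that $(\bar X, F+\Delta')$ is sub-lc with $v_0$ an lc place, where $\Delta'\sim_\Q -(K_{\bar X}+F)$ is built from $\Delta$ and the base ideals of $\chi_{v_0}$ via~\eqref{equ:FSbaseideal}. More precisely, we write $T(v_0;v)=v(CF)-v(\fb_\bullet(\chi_{v_0}))$ and pick $\Gamma$ as a general member of the asymptotic linear series $\fb_m(\chi_{v_0})$ plus a multiple of $\Delta$. The lc-threshold equality then says exactly that $(\bar X,F+\Gamma)$ is lc with $v_0$ an lc place. The hypothesis guarantees that the coefficient of $F$ in $\Gamma$ is nonnegative, so that $\Gamma$ is effective; the introduction states exactly this, that without it one only gets a subpair.

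\textbf{Step 3: specialness.} The general divisor $\Delta$ coming from $|mF|$ provides an ample part not containing strata on a log smooth model containing $v_0$, as in the proof of Lemma~\ref{lem:fgcomp}. This makes $\Gamma$ special with respect to that model. A Koll\'ar-type ACC/boundedness argument (\cite{Birbdd}) lets us replace the asymptotic series by a finite-level $N$-complement while keeping $v_0$ an lc place. Here we argue as in \cite[Lemma~4.3]{LXZ}: only finitely many monomials matter at $v_0$, so a perturbation to an actual $\Q$-divisor keeps $v_0$ in $\lc(\Gamma;Y,E)$. Then Theorem~\ref{thm:kltfg} (2)$\Rightarrow$(1) concludes, and Theorem~\ref{thm:qmvaluative} gives K-semistability of $X_{v_0}$.

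\textbf{Main obstacle.} The main obstacle is Step~2: turning the infinite-dimensional lct statement at infinity into an honest $\Q$-complement on $\bar X$, and checking that the numerical hypothesis is exactly what forces effectivity of the coefficient along $F$. I would first try the explicit choice $\Gamma=\lambda\,\fb_\bullet(\chi_{v_0})+\mu F$ and solve for $\lambda,\mu$ using $A_X(v_0)$, $T(v_0;F)$ and $T(F;v_0)$, checking $\mu\ge0$.
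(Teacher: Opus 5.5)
\textbf{The gap is in Step~2, and it is structural.} You aim for a special complement of the \emph{pair} $(\bar X,F)$, i.e.\ a boundary containing $F$ with coefficient one, so that you can apply Theorem~\ref{thm:kltfg}(2). The lct identity of Lemma~\ref{lem:ksslct} cannot produce such a complement.

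Write $D$, $r_0$ for your $\Delta$, $c_0$, and carry out the computation you postponed. For any $C\ge T(v_0;F)$ one has $\chi_{v_0}\le C^{-1}\chi_{\bar X,F}$, so \eqref{equ:FSbaseideal} gives $\FS(\chi_{v_0})(v)=C\,v(F)-v(\fb^{C}_\bullet)$. Combined with $A_{X,D}(v)=A_{\bar X,\bar D}(v)-(1-A_{X,D}(F))v(F)$, the inequality $A_{X,D}(v)\ge -r_0\FS(\chi_{v_0})(v)$ becomes
\[
A_{\bar X,\,\bar D+aF}(v)\ \ge\ r_0\,v(\fb^{C}_\bullet),\qquad a\coloneqq 1-A_{X,D}(F)-r_0C .
\]
Evaluating the original inequality at $v=\ord_F$ gives $-A_{X,D}(F)\le r_0T(v_0;F)$. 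Hence $a\le 1$, with equality only if $C=T(v_0;F)$ and $\ord_F$ itself computes $\lct(X,D;\chi_{v_0})$. Nothing in the hypotheses guarantees this.

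So the $F$-coefficient of your $\Gamma$, which is $a-1$, is in general negative. The hypothesis controls $a\ge 0$, not $a-1\ge 0$; your reading of the introduction's ``pair versus subpair'' remark is off by exactly one copy of $F$. The paper obtains $a\ge0$ by combining four facts:
\begin{itemize}
\item $A_{X,D}(F)\ge -r_0T(v_0;F)$;
\item $\ord_F(D)\ge T(v_0;F)v_0(D)$;
\item Remark~\ref{rmk:tylct}, which gives $-A_X(v_0)\le -A_X(F)T(F;v_0)$;
\item the hypothesis $T(v_0;F)T(F;v_0)\le 1-A_X(F)^{-1}$.
\end{itemize}
The needed input is Remark~\ref{rmk:tylct}; your submultiplicativity bound does not give it. Together these yield $r_0T(v_0;F)\le 1-A_{X,D}(F)$. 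Hence $C=(1-A_{X,D}(F))/r_0$ is admissible, $a=0$, and $(\bar X,\bar D;r_0\cdot\fb_\bullet)$ is lc with $v_0$ an lc place.

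Consequently, what your argument actually yields is a special $\Q$-complement $\Gamma\ge\bar D$ of $\bar X$ itself, not of $(\bar X,F)$. This comes after the \cite[Lemma~3.1]{LXZ}-type passage from the graded sequence to an honest $\Q$-divisor. For specialness you need $\bar D=t\pi_*G$ with $G\in|\pi^*F+\varepsilon H|_\Q$ general; the pullback of a general member of $|mF|$ is not ample on $Y$.

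Theorem~\ref{thm:kltfg}(2) does not accept this input, since it requires $F$ with coefficient one in the boundary. The missing idea is that $\bar X$ is a klt Fano variety: $(\bar X,F)$ is plt and $-K_{\bar X}\sim_\Q(1-A_X(F))F$ is ample. You can then invoke the global finite generation theorem \cite[Theorem~4.2]{LXZ} for monomial lc places of special complements on Fano varieties. This gives finite generation of $v_0$ on $\bar X$, and Lemma~\ref{lem:fgres} transfers it to $X$.
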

Before going to the proof, let us remark that the left hand side is scaling invariant and bounded below by $1$. So the quantity is measuring how close $v_0$ is to the given compactification. In fact, it is not hard to show that the expression $\log T(v_0; F)T(F; v_0)$ defines a metric closely related to the $\de_\infty$ metric on the space of bounded negative norms. On the other hand, the condition comes out of a fairly direct computation rewriting~\eqref{equ:lctcomp} as an lc condition on a pair of the form $(\bar X, \bar D+cF; \fb_\bullet)$ by making sure $c\ge 0$; thus we have chosen to not describe the condition in terms of the $\de_\infty$ metric.


\begin{proof}[Proof of Theorem~\ref{thm:kssfg}]
    We will show that in fact $v_0$ is finitely generated on $\bar X$ by realizing $v_0$ as an lc place of some special complement of $\bar X$. Let $\pi: (Y, E)\to \bar X$ be a log smooth model of $X$ such that $v_0$ lies in the interior of $\QM_\eta(Y, E)$ for $\eta=c_Y(v_0)$, and that the exceptional locus supports an ample $\Q$-divisor $H$. Let $L=\pi^*F+\varepsilon H$ for some fixed $0<\varepsilon\ll 1$ such that $L$ is ample. Pick $G\in |L|_\Q$ general such that its support does not contain any stratum of $(Y, E)$, and let $\bar D_0=\pi_*G$. Then $\bar D_0=\frac 1m \{s=0\}$ for some $s\in H^0(\bar X, mF)$, and $f\coloneqq \frac{s}{s_E^m}\in R$. Let $0<t\ll 1$ be a rational number such that $D=t\div(f)$ satisfies $v_0(D)>\frac{A_X(v_0)}{n}$. Then by Lemma~\ref{lem:ksslct}, we have
    \[
    r_0\=-A_{X, D}(v_0) = \lct(X, D; \chi_{v_0}). 
    \] 
    Equivalently, for any $v\in X^\val_\infty$, we have
    \begin{equation}\label{equ:lctcomp}
    A_{X, D}(v)\ge -r_0\FS(\chi_{v_0})(v)
    \end{equation}
    In particular, $A_{X, D}(F)\ge -r_0T(v_0; F)$. Note also that by Remark~\ref{rmk:tylct}, we have $\lct(\chi_F)=-A_X(F)$, giving $-A_X(v_0)\le -A_X(F)T(F; v_0)$. Thus by assumption, we have
    \[
    1-A_X(F)\ge -A_X(F)T(F; v_0)T(v_0; F)\ge -A_X(v_0)T(v_0;F)=r_0T(v_0; F)-v_0(D)T(v_0; F).
    \]
    Since $\ord_F(D)\ge T(v_0; F)v_0(D)$, we have $r_0T(v_0; F)<1-A_{X, D}(F)$. Set $c\=\frac{1-A_{X, D}(F)}{r_0}$, we get $v_0\le T(v_0; F)^{-1}\ord_F\le c^{-1}\ord_F$. Now using~\eqref{equ:FSbaseideal}, ~\eqref{equ:lctcomp} reads
    \[
    A_{X,D}(v)\ge -r_0(cv(F)-v(\fb_\bullet))=r_0v(\fb_\bullet)-(1-A_{X, D}(F))v(F),
    \]
    where $\fb_\bullet$ is the associated base ideals.
    Using $A_{X, D}(v) = A_{\bar X, \bar D}(v) - (1-A_{X, D}(F))v(F)$, we have
    \[
    A_{\bar X, \bar D}(v)\ge r_0v(\fb_\bullet),
    \]
    This shows that $(\bar X, \bar D; r_0\cdot \fb_\bullet)$ is an lc pair, with $v_0$ being an lc place. 
    
    If $v_0=\ord_E$ is divisorial, then we may choose $0<\varepsilon\ll1$ such that for $m\gg 0$, $0<A_{\bar X, \bar D+(1-\varepsilon)\frac{r_0}{m}\fb_m}(E)<1$. Now by~\cite{BCHM}, we may extract $E$ over $\bar X$ as the unique prime divisor, and it follows that $E$ is finitely generated on $\bar X$, and therefore also finitely generated on $X$. 
    
    In general, for $v_0$ quasimonomial, we may proceed as in~\cite[Lemma 3.1]{LXZ} to get a $\Q$-complement $\Gamma\ge \bar D$ of $\bar X$ with $v_0$ as an lc place, which is special with respect to $(Y,E)$. This shows $v_0$ is finitely generated on $\bar X$ and hence also on $X$. More precisely, for any $0<\varepsilon\ll1$, we may find divisors $F_1, \dots, F_l$ over $\bar X$, where $l$ is the rational rank of $v_0$ so that
    \[
    A_{\bar X, \bar D+r_0(1-\varepsilon_0)\fb_\bullet}(F_i)<\varepsilon,
    \]
    for $0<\varepsilon_0\ll1$. Choose $0<\delta_m<\min\{r_0, m\lct(\bar X, \bar D; \fb_m)\}$ with 
    $\frac{\delta_m}{m}\to r_0(1-\varepsilon_0)$. Then for a general element $D_m=\{s_m=0\}$ for $s_m\in \fb_m$, we have $(\bar X, \bar D+\frac{\delta_mr_0}{m}D_m)$ is log Fano, and that $A_{\bar X,\bar D+\frac{\delta_m}{m}D_m}(F_i)<\varepsilon$. The rest of the proof of Lemma~3.1 and Lemma~3.2 in~\cite{LXZ} gives a $\Q$-complement $\Gamma\ge\bar D$ of $\bar X$, with $v_0$ as an lc place. We conclude by~\cite[Theorem~4.2]{LXZ}.
\end{proof}

\begin{exam}
    Let $X=\bar X\setminus F$ where $\bar X=\mathrm{Bl}_{p}\P^3$, and $F=\mathrm{Bl}_{p}Q\in |-\frac 12 K_{\bar X}|$, where $Q$ is a quadric passing through $p$. Then $A_X(F)=-1$, and let $v_0$ be the K-semistable valuation described in~\cite{CH15}. It is not hard to check that $T(v_0; F)T(F; v_0)<2$. In fact, the two valuations have the same associated graded.
\end{exam}

%
%

\printbibliography

@article {AZ22,
    AUTHOR = {Abban, Hamid and Zhuang, Ziquan},
     TITLE = {K-stability of {F}ano varieties via admissible flags},
   JOURNAL = {Forum Math. Pi},
  FJOURNAL = {Forum of Mathematics. Pi},
    VOLUME = {10},
      YEAR = {2022},
     PAGES = {Paper No. e15, 43},
      ISSN = {2050-5086},
   MRCLASS = {14J45 (32Q20)},
  MRNUMBER = {4448177},
       DOI = {10.1017/fmp.2022.11},
       URL = {https://doi.org/10.1017/fmp.2022.11},
}

@article {BirkarNeg,
    AUTHOR = {Birkar, Caucher},
     TITLE = {Existence of log canonical flips and a special {LMMP}},
   JOURNAL = {Publ. Math. Inst. Hautes \'Etudes Sci.},
  FJOURNAL = {Publications Math\'ematiques. Institut de Hautes \'Etudes
              Scientifiques},
    VOLUME = {115},
      YEAR = {2012},
     PAGES = {325--368},
      ISSN = {0073-8301,1618-1913},
   MRCLASS = {14E30},
  MRNUMBER = {2929730},
MRREVIEWER = {Hsian-Hua\ Tseng},
       DOI = {10.1007/s10240-012-0039-5},
       URL = {https://doi.org/10.1007/s10240-012-0039-5},
}

@article {Birbdd,
    AUTHOR = {Birkar, Caucher},
     TITLE = {Anti-pluricanonical systems on {F}ano varieties},
   JOURNAL = {Ann. of Math. (2)},
  FJOURNAL = {Annals of Mathematics. Second Series},
    VOLUME = {190},
      YEAR = {2019},
    NUMBER = {2},
     PAGES = {345--463},
      ISSN = {0003-486X,1939-8980},
   MRCLASS = {14J45 (14C20 14E05 14E30)},
  MRNUMBER = {3997127},
MRREVIEWER = {James\ McKernan},
       DOI = {10.4007/annals.2019.190.2.1},
       URL = {https://doi.org/10.4007/annals.2019.190.2.1},
}

@article {BCHM,
    AUTHOR = {Birkar, Caucher and Cascini, Paolo and Hacon, Christopher D.
              and McKernan, James},
     TITLE = {Existence of minimal models for varieties of log general type},
   JOURNAL = {J. Amer. Math. Soc.},
  FJOURNAL = {Journal of the American Mathematical Society},
    VOLUME = {23},
      YEAR = {2010},
    NUMBER = {2},
     PAGES = {405--468},
      ISSN = {0894-0347,1088-6834},
   MRCLASS = {14E30 (14E05)},
  MRNUMBER = {2601039},
MRREVIEWER = {Mark\ Gross},
       DOI = {10.1090/S0894-0347-09-00649-3},
       URL = {https://doi.org/10.1090/S0894-0347-09-00649-3},
}

@article {BlumMin,
    AUTHOR = {Blum, Harold},
     TITLE = {Existence of valuations with smallest normalized volume},
   JOURNAL = {Compos. Math.},
  FJOURNAL = {Compositio Mathematica},
    VOLUME = {154},
      YEAR = {2018},
    NUMBER = {4},
     PAGES = {820--849},
      ISSN = {0010-437X,1570-5846},
   MRCLASS = {14B05 (12J20)},
  MRNUMBER = {3778195},
MRREVIEWER = {K.\ Kiyek},
       DOI = {10.1112/S0010437X17008016},
       URL = {https://doi.org/10.1112/S0010437X17008016},
}

@article {BlJ,
    AUTHOR = {Blum, Harold and Jonsson, Mattias},
     TITLE = {Thresholds, valuations, and {K}-stability},
   JOURNAL = {Adv. Math.},
  FJOURNAL = {Advances in Mathematics},
    VOLUME = {365},
      YEAR = {2020},
     PAGES = {107062, 57},
      ISSN = {0001-8708,1090-2082},
   MRCLASS = {14C20 (14M25)},
  MRNUMBER = {4067358},
MRREVIEWER = {Chenyang\ Xu},
       DOI = {10.1016/j.aim.2020.107062},
       URL = {https://doi.org/10.1016/j.aim.2020.107062},
      LABEL={BlJ},
}

@article {BE21,
    AUTHOR = {Boucksom, S\'{e}bastien and Eriksson, Dennis},
     TITLE = {Spaces of norms, determinant of cohomology and {F}ekete points
              in non-{A}rchimedean geometry},
   JOURNAL = {Adv. Math.},
  FJOURNAL = {Advances in Mathematics},
    VOLUME = {378},
      YEAR = {2021},
     PAGES = {Paper No. 107501, 124},
      ISSN = {0001-8708,1090-2082},
   MRCLASS = {32P05 (14G22 31C15)},
  MRNUMBER = {4192993},
MRREVIEWER = {Mattias\ Jonsson},
       DOI = {10.1016/j.aim.2020.107501},
       URL = {https://doi.org/10.1016/j.aim.2020.107501},
}

@article {nakstab2,
    AUTHOR = {Boucksom, S\'{e}bastien and Jonsson, Mattias},
     TITLE = {A non-{A}rchimedean approach to {K}-stability, {II}:
              {D}ivisorial stability and openness},
   JOURNAL = {J. Reine Angew. Math.},
  FJOURNAL = {Journal f\"{u}r die Reine und Angewandte Mathematik. [Crelle's
              Journal]},
    VOLUME = {805},
      YEAR = {2023},
     PAGES = {1--53},
      ISSN = {0075-4102,1435-5345},
   MRCLASS = {14G22 (14C20 14E30)},
  MRNUMBER = {4669034},
       DOI = {10.1515/crelle-2023-0062},
       URL = {https://doi.org/10.1515/crelle-2023-0062},
    LABEL = {BoJ},
}

@article{nakstab1,
    AUTHOR = {Boucksom, S\'ebastien and Jonsson, Mattias},
     TITLE = {A non-{A}rchimedean approach to {K}-stability, {I}: {M}etric
              geometry of spaces of test configurations and valuations},
   JOURNAL = {Ann. Inst. Fourier (Grenoble)},
  FJOURNAL = {Universit\'e{} de Grenoble. Annales de l'Institut Fourier},
    VOLUME = {75},
      YEAR = {2025},
    NUMBER = {2},
     PAGES = {829--927},
      ISSN = {0373-0956,1777-5310},
   MRCLASS = {32P05 (14G22 32Q26)},
  MRNUMBER = {4921366},
       DOI = {10.5802/aif.3668},
       URL = {https://doi.org/10.5802/aif.3668},
       LABEL={BoJ},
}

@article {CLCY,
    AUTHOR = {Collins, Tristan C. and Li, Yang},
     TITLE = {Complete {C}alabi-{Y}au metrics in the complement of two
              divisors},
   JOURNAL = {Duke Math. J.},
  FJOURNAL = {Duke Mathematical Journal},
    VOLUME = {173},
      YEAR = {2024},
    NUMBER = {18},
     PAGES = {3559--3604},
      ISSN = {0012-7094,1547-7398},
   MRCLASS = {32Q20 (32Q25 53C25 53C55)},
  MRNUMBER = {4850960},
       DOI = {10.1215/00127094-2024-0020},
       URL = {https://doi.org/10.1215/00127094-2024-0020},
}

@article {CSz18,
    AUTHOR = {Collins, Tristan C. and Sz\'{e}kelyhidi, G\'{a}bor},
     TITLE = {K-semistability for irregular {S}asakian manifolds},
   JOURNAL = {J. Differential Geom.},
  FJOURNAL = {Journal of Differential Geometry},
    VOLUME = {109},
      YEAR = {2018},
    NUMBER = {1},
     PAGES = {81--109},
      ISSN = {0022-040X,1945-743X},
   MRCLASS = {32Q26 (53C25 53C55)},
  MRNUMBER = {3798716},
MRREVIEWER = {Ruadha\'{\i}\ Dervan},
       DOI = {10.4310/jdg/1525399217},
       URL = {https://doi.org/10.4310/jdg/1525399217},
  LABEL={CSz},
}

@article{CH22,
  title= {Classification of asymptotically conical {C}alabi-{Y}au manifolds},
  author= {Conlon, J. Ronan and Hein, Hans-Joachim},
  journal={\url{arXiv:2201.00870}},
  year={2022},
}

@article {CH15,
    AUTHOR = {Conlon, Ronan J. and Hein, Hans-Joachim},
     TITLE = {Asymptotically conical {C}alabi-{Y}au metrics on
              quasi-projective varieties},
   JOURNAL = {Geom. Funct. Anal.},
  FJOURNAL = {Geometric and Functional Analysis},
    VOLUME = {25},
      YEAR = {2015},
    NUMBER = {2},
     PAGES = {517--552},
      ISSN = {1016-443X,1420-8970},
   MRCLASS = {53C55 (32Q25 53C25)},
  MRNUMBER = {3334234},
MRREVIEWER = {Gabriel\ Eduard\ V\^ilcu},
       DOI = {10.1007/s00039-015-0319-6},
       URL = {https://doi.org/10.1007/s00039-015-0319-6},
}

@article {CR,
    AUTHOR = {Conlon, Ronan J. and Rochon, Fr\'ed\'eric},
     TITLE = {New examples of complete {C}alabi-{Y}au metrics on {$\mathbb{C}^n$} for {$n \geq 3$}},
   JOURNAL = {Ann. Sci. \'Ec. Norm. Sup\'er. (4)},
  FJOURNAL = {Annales Scientifiques de l'\'Ecole Normale Sup\'erieure.
              Quatri\`eme S\'erie},
    VOLUME = {54},
      YEAR = {2021},
    NUMBER = {2},
     PAGES = {259--303},
      ISSN = {0012-9593,1873-2151},
   MRCLASS = {32Q25 (14J32 53C55)},
  MRNUMBER = {4258163},
       DOI = {10.24033/asens.2459},
       URL = {https://doi.org/10.24033/asens.2459},
}

@article {DS2,
    AUTHOR = {Donaldson, Simon and Sun, Song},
     TITLE = {Gromov-{H}ausdorff limits of {K}\"ahler manifolds and
              algebraic geometry, {II}},
   JOURNAL = {J. Differential Geom.},
  FJOURNAL = {Journal of Differential Geometry},
    VOLUME = {107},
      YEAR = {2017},
    NUMBER = {2},
     PAGES = {327--371},
      ISSN = {0022-040X,1945-743X},
   MRCLASS = {53C55 (32Q20 53C23)},
  MRNUMBER = {3707646},
MRREVIEWER = {Stuart\ James\ Hall},
       DOI = {10.4310/jdg/1506650422},
       URL = {https://doi.org/10.4310/jdg/1506650422},
}

@article {HMX,
    AUTHOR = {Hacon, Christopher D. and McKernan, James and Xu, Chenyang},
     TITLE = {A{CC} for log canonical thresholds},
   JOURNAL = {Ann. of Math. (2)},
  FJOURNAL = {Annals of Mathematics. Second Series},
    VOLUME = {180},
      YEAR = {2014},
    NUMBER = {2},
     PAGES = {523--571},
      ISSN = {0003-486X,1939-8980},
   MRCLASS = {14E05 (14C20 14E30)},
  MRNUMBER = {3224718},
MRREVIEWER = {Alexandr\ V.\ Pukhlikov},
       DOI = {10.4007/annals.2014.180.2.3},
       URL = {https://doi.org/10.4007/annals.2014.180.2.3},
}

@article{Huang,
    author = {Huang, Kai},
    title = {K-stability of log {F}ano cone singularities},
    journal = {Thesis (Ph.D.)–Massachusetts Institute of Technology},
    year = {2022},
}

@incollection {JonDynBerko,
    AUTHOR = {Jonsson, Mattias},
     TITLE = {Dynamics of {B}erkovich spaces in low dimensions},
 BOOKTITLE = {Berkovich spaces and applications},
    SERIES = {Lecture Notes in Math.},
    VOLUME = {2119},
     PAGES = {205--366},
 PUBLISHER = {Springer, Cham},
      YEAR = {2015},
      ISBN = {978-3-319-11028-8},
%      ISBN = {978-3-319-11028-8; 978-3-319-11029-5},
   MRCLASS = {37P50 (14G22 26E30)},
  MRNUMBER = {3330767},
MRREVIEWER = {Benjamin\ A.\ Hutz},
       DOI = {10.1007/978-3-319-11029-5\_6},
       URL = {https://doi.org/10.1007/978-3-319-11029-5_6},
}

@article {JM12,
    AUTHOR = {Jonsson, Mattias and Musta\c{t}\u{a}, Mircea},
     TITLE = {Valuations and asymptotic invariants for sequences of ideals},
   JOURNAL = {Ann. Inst. Fourier (Grenoble)},
  FJOURNAL = {Universit\'{e} de Grenoble. Annales de l'Institut Fourier},
    VOLUME = {62},
      YEAR = {2012},
    NUMBER = {6},
     PAGES = {2145--2209},
      ISSN = {0373-0956,1777-5310},
   MRCLASS = {14F18 (12J20 14B05)},
  MRNUMBER = {3060755},
MRREVIEWER = {Carlos\ Galindo},
       DOI = {10.5802/aif.2746},
       URL = {https://doi.org/10.5802/aif.2746},
}

@article{JW26,
  title= {The space of polarized compactifications of an affine variety},
  author= {Jonsson, Mattias and Wu, Yueqiao},
  journal={\url{arXiv:2610.21838}},
  year={2026},
}

@article {KY,
    AUTHOR = {Keel, Sean and Yu, Tony Yue},
     TITLE = {The {F}robenius structure theorem for affine log
              {C}alabi-{Y}au varieties containing a torus},
   JOURNAL = {Ann. of Math. (2)},
  FJOURNAL = {Annals of Mathematics. Second Series},
    VOLUME = {198},
      YEAR = {2023},
    NUMBER = {2},
     PAGES = {419--536},
      ISSN = {0003-486X,1939-8980},
   MRCLASS = {14J33 (13F60 14G22 14J32 14N35 14T15)},
  MRNUMBER = {4635299},
MRREVIEWER = {Paul\ A.\ Hacking},
       DOI = {10.4007/annals.2023.198.2.1},
       URL = {https://doi.org/10.4007/annals.2023.198.2.1},
}

@book {SingMMP,
    AUTHOR = {Koll\'ar, J\'anos},
     TITLE = {Singularities of the minimal model program},
    SERIES = {Cambridge Tracts in Mathematics},
    VOLUME = {200},
      NOTE = {With a collaboration of S\'andor Kov\'acs},
 PUBLISHER = {Cambridge University Press, Cambridge},
      YEAR = {2013},
     PAGES = {x+370},
      ISBN = {978-1-107-03534-8},
   MRCLASS = {14E30 (14B05)},
  MRNUMBER = {3057950},
MRREVIEWER = {Tommaso\ De Fernex},
       DOI = {10.1017/CBO9781139547895},
       URL = {https://doi.org/10.1017/CBO9781139547895},
}

@book {KollarMori,
    AUTHOR = {Koll\'{a}r, J\'{a}nos and Mori, Shigefumi},
     TITLE = {Birational geometry of algebraic varieties},
    SERIES = {Cambridge Tracts in Mathematics},
    VOLUME = {134},
      NOTE = {With the collaboration of C. H. Clemens and A. Corti,
              Translated from the 1998 Japanese original},
 PUBLISHER = {Cambridge University Press, Cambridge},
      YEAR = {1998},
     PAGES = {viii+254},
      ISBN = {0-521-63277-3},
   MRCLASS = {14E30},
  MRNUMBER = {1658959},
MRREVIEWER = {Mark\ Gross},
       DOI = {10.1017/CBO9780511662560},
       URL = {https://doi.org/10.1017/CBO9780511662560},
}

@article {Linvol,
    AUTHOR = {Li, Chi},
     TITLE = {Minimizing normalized volumes of valuations},
   JOURNAL = {Math. Z.},
  FJOURNAL = {Mathematische Zeitschrift},
    VOLUME = {289},
      YEAR = {2018},
    NUMBER = {1-2},
     PAGES = {491--513},
      ISSN = {0025-5874,1432-1823},
   MRCLASS = {14B05},
  MRNUMBER = {3803800},
MRREVIEWER = {Chen\ Jiang},
       DOI = {10.1007/s00209-017-1963-3},
       URL = {https://doi.org/10.1007/s00209-017-1963-3},
}

@article {LWX,
    AUTHOR = {Li, Chi and Wang, Xiaowei and Xu, Chenyang},
     TITLE = {Algebraicity of the metric tangent cones and equivariant
              {K}-stability},
   JOURNAL = {J. Amer. Math. Soc.},
  FJOURNAL = {Journal of the American Mathematical Society},
    VOLUME = {34},
      YEAR = {2021},
    NUMBER = {4},
     PAGES = {1175--1214},
      ISSN = {0894-0347,1088-6834},
   MRCLASS = {14B07 (14E30 14J17 14J45 53C55)},
  MRNUMBER = {4301561},
MRREVIEWER = {Carlos\ Galindo},
       DOI = {10.1090/jams/974},
       URL = {https://doi.org/10.1090/jams/974},
}

@article{LX18,
    AUTHOR = {Li, Chi and Xu, Chenyang},
     TITLE = {Stability of valuations: higher rational rank},
   JOURNAL = {Peking Math. J.},
  FJOURNAL = {Peking Mathematical Journal},
    VOLUME = {1},
      YEAR = {2018},
    NUMBER = {1},
     PAGES = {1--79},
      ISSN = {2096-6075},
   MRCLASS = {14E30 (14B05 32Q26)},
  MRNUMBER = {4059992},
MRREVIEWER = {Jingjun Han},
       DOI = {10.1007/s42543-018-0001-7},
       URL = {https://doi.org/10.1007/s42543-018-0001-7},
}

@article {LXKcomp,
    AUTHOR = {Li, Chi and Xu, Chenyang},
     TITLE = {Stability of valuations and {K}oll\'ar components},
   JOURNAL = {J. Eur. Math. Soc. (JEMS)},
  FJOURNAL = {Journal of the European Mathematical Society (JEMS)},
    VOLUME = {22},
      YEAR = {2020},
    NUMBER = {8},
     PAGES = {2573--2627},
      ISSN = {1435-9855,1435-9863},
   MRCLASS = {14J17 (13A18 14E20)},
  MRNUMBER = {4118616},
MRREVIEWER = {Jos\'e\ Mar\'ia\ Tornero},
       DOI = {10.4171/JEMS/972},
       URL = {https://doi.org/10.4171/JEMS/972},
}

@article {GLiu,
    AUTHOR = {Liu, Gang},
     TITLE = {Compactification of certain {K}\"ahler manifolds with
              nonnegative {R}icci curvature},
   JOURNAL = {Adv. Math.},
  FJOURNAL = {Advances in Mathematics},
    VOLUME = {382},
      YEAR = {2021},
     PAGES = {Paper No. 107652, 27},
      ISSN = {0001-8708,1090-2082},
   MRCLASS = {53C55 (32Q05 32Q15 53C21)},
  MRNUMBER = {4224050},
MRREVIEWER = {Leonardo\ Biliotti},
       DOI = {10.1016/j.aim.2021.107652},
       URL = {https://doi.org/10.1016/j.aim.2021.107652},
}

@article{LW,
      title={K-semistability of log Fano cone singularities}, 
      author={Yuchen Liu and Yueqiao Wu},
      year={2025},
      journal={\url{arXiv:2408.05189}},
}

@article {LXZ,
    AUTHOR = {Liu, Yuchen and Xu, Chenyang and Zhuang, Ziquan},
     TITLE = {Finite generation for valuations computing stability
              thresholds and applications to {K}-stability},
   JOURNAL = {Ann. of Math. (2)},
  FJOURNAL = {Annals of Mathematics. Second Series},
    VOLUME = {196},
      YEAR = {2022},
    NUMBER = {2},
     PAGES = {507--566},
      ISSN = {0003-486X,1939-8980},
   MRCLASS = {14J45 (14D20 14E30 32Q20)},
  MRNUMBER = {4445441},
MRREVIEWER = {James\ McKernan},
       DOI = {10.4007/annals.2022.196.2.2},
       URL = {https://doi.org/10.4007/annals.2022.196.2.2},
}

@article {OX,
    AUTHOR = {Odaka, Yuji and Xu, Chenyang},
     TITLE = {Log-canonical models of singular pairs and its applications},
   JOURNAL = {Math. Res. Lett.},
  FJOURNAL = {Mathematical Research Letters},
    VOLUME = {19},
      YEAR = {2012},
    NUMBER = {2},
     PAGES = {325--334},
      ISSN = {1073-2780,1945-001X},
   MRCLASS = {14E30},
  MRNUMBER = {2955764},
MRREVIEWER = {James\ McKernan},
       DOI = {10.4310/MRL.2012.v19.n2.a5},
       URL = {https://doi.org/10.4310/MRL.2012.v19.n2.a5},
}

@article {Sunbubble,
    AUTHOR = {Sun, Song},
     TITLE = {Bubbling of {K}\"ahler-{E}instein metrics},
   JOURNAL = {Pure Appl. Math. Q.},
  FJOURNAL = {Pure and Applied Mathematics Quarterly},
    VOLUME = {21},
      YEAR = {2025},
    NUMBER = {3},
     PAGES = {1317--1348},
      ISSN = {1558-8599,1558-8602},
   MRCLASS = {32Q20 (14E30)},
  MRNUMBER = {4852036},
       DOI = {10.4310/pamq.250116024725},
       URL = {https://doi.org/10.4310/pamq.250116024725},
}

@article {SunZhang,
    AUTHOR = {Sun, Song and Zhang, Junsheng},
     TITLE = {No semistability at infinity for {C}alabi-{Y}au metrics
              asymptotic to cones},
   JOURNAL = {Invent. Math.},
  FJOURNAL = {Inventiones Mathematicae},
    VOLUME = {233},
      YEAR = {2023},
    NUMBER = {1},
     PAGES = {461--494},
      ISSN = {0020-9910,1432-1297},
   MRCLASS = {53C55 (32Q26)},
  MRNUMBER = {4602001},
MRREVIEWER = {Antonella\ Nannicini},
       DOI = {10.1007/s00222-023-01187-4},
       URL = {https://doi.org/10.1007/s00222-023-01187-4},
}

@article {GaborCY,
    AUTHOR = {Sz{\'{e}}kelyhidi, G\'{a}bor},
     TITLE = {Degenerations of {$\C^n$} and {C}alabi-{Y}au metrics},
   JOURNAL = {Duke Math. J.},
  FJOURNAL = {Duke Mathematical Journal},
    VOLUME = {168},
      YEAR = {2019},
    NUMBER = {14},
     PAGES = {2651--2700},
      ISSN = {0012-7094,1547-7398},
   MRCLASS = {53C55 (53C25)},
  MRNUMBER = {4012345},
MRREVIEWER = {Anna\ M.\ Fino},
       DOI = {10.1215/00127094-2019-0021},
       URL = {https://doi.org/10.1215/00127094-2019-0021},
}

@article {Szeuniq,
    AUTHOR = {Sz\'ekelyhidi, G\'abor},
     TITLE = {Uniqueness of some {C}alabi-{Y}au metrics on {${\bf C}^n$}},
   JOURNAL = {Geom. Funct. Anal.},
  FJOURNAL = {Geometric and Functional Analysis},
    VOLUME = {30},
      YEAR = {2020},
    NUMBER = {4},
     PAGES = {1152--1182},
      ISSN = {1016-443X,1420-8970},
   MRCLASS = {32Q25 (53C55)},
  MRNUMBER = {4153912},
MRREVIEWER = {Yalong\ Shi},
       DOI = {10.1007/s00039-020-00543-3},
       URL = {https://doi.org/10.1007/s00039-020-00543-3},
}

@article {TY1,
    AUTHOR = {Tian, G. and Yau, Shing-Tung},
     TITLE = {Complete {K}\"ahler manifolds with zero {R}icci curvature.
              {I}},
   JOURNAL = {J. Amer. Math. Soc.},
  FJOURNAL = {Journal of the American Mathematical Society},
    VOLUME = {3},
      YEAR = {1990},
    NUMBER = {3},
     PAGES = {579--609},
      ISSN = {0894-0347,1088-6834},
   MRCLASS = {53C55 (32C10 53C25)},
  MRNUMBER = {1040196},
MRREVIEWER = {Alan\ Michael\ Nadel},
       DOI = {10.2307/1990928},
       URL = {https://doi.org/10.2307/1990928},
}

@article {TY2,
    AUTHOR = {Tian, Gang and Yau, Shing-Tung},
     TITLE = {Complete {K}\"ahler manifolds with zero {R}icci curvature.
              {II}},
   JOURNAL = {Invent. Math.},
  FJOURNAL = {Inventiones Mathematicae},
    VOLUME = {106},
      YEAR = {1991},
    NUMBER = {1},
     PAGES = {27--60},
      ISSN = {0020-9910,1432-1297},
   MRCLASS = {32C17 (32F07 53C21 53C55)},
  MRNUMBER = {1123371},
MRREVIEWER = {Chi-Keung\ Cheung},
       DOI = {10.1007/BF01243902},
       URL = {https://doi.org/10.1007/BF01243902},
}

@article {NALFC1,
    AUTHOR = {Wu, Yueqiao},
     TITLE = {Volume and {M}onge-{A}mp\`ere energy on polarized affine
              varieties},
   JOURNAL = {Math. Z.},
  FJOURNAL = {Mathematische Zeitschrift},
    VOLUME = {301},
      YEAR = {2022},
    NUMBER = {1},
     PAGES = {781--809},
      ISSN = {0025-5874,1432-1823},
   MRCLASS = {32W20 (14R20 32U40)},
  MRNUMBER = {4405668},
MRREVIEWER = {Vincent\ Guedj},
       DOI = {10.1007/s00209-021-02935-z},
       URL = {https://doi.org/10.1007/s00209-021-02935-z},
}

@article {Xuqm,
    AUTHOR = {Xu, Chenyang},
     TITLE = {A minimizing valuation is quasi-monomial},
   JOURNAL = {Ann. of Math. (2)},
  FJOURNAL = {Annals of Mathematics. Second Series},
    VOLUME = {191},
      YEAR = {2020},
    NUMBER = {3},
     PAGES = {1003--1030},
      ISSN = {0003-486X,1939-8980},
   MRCLASS = {14E30 (14J17 14J45)},
  MRNUMBER = {4088355},
MRREVIEWER = {Yuchen\ Liu},
       DOI = {10.4007/annals.2020.191.3.6},
       URL = {https://doi.org/10.4007/annals.2020.191.3.6},
}

@book {XuKstab,
    AUTHOR = {Xu, Chenyang},
     TITLE = {K-stability of {F}ano varieties},
    SERIES = {New Mathematical Monographs},
    VOLUME = {50},
 PUBLISHER = {Cambridge University Press, Cambridge},
      YEAR = {2025},
     PAGES = {xi+411},
      ISBN = {978-1-009-53877-0; [9781009538763]},
   MRCLASS = {14J45 (32Q26)},
  MRNUMBER = {4893062},
}

@article {XZ21,
    AUTHOR = {Xu, Chenyang and Zhuang, Ziquan},
     TITLE = {Uniqueness of the minimizer of the normalized volume function},
   JOURNAL = {Camb. J. Math.},
  FJOURNAL = {Cambridge Journal of Mathematics},
    VOLUME = {9},
      YEAR = {2021},
    NUMBER = {1},
     PAGES = {149--176},
      ISSN = {2168-0930,2168-0949},
   MRCLASS = {14B05 (13A18 14E30)},
  MRNUMBER = {4325260},
MRREVIEWER = {Yuchen\ Liu},
       DOI = {10.4310/CJM.2021.v9.n1.a2},
       URL = {https://doi.org/10.4310/CJM.2021.v9.n1.a2},
}

@article {XZstab,
    AUTHOR = {Xu, Chenyang and Zhuang, Ziquan},
     TITLE = {Stable degenerations of singularities},
   JOURNAL = {J. Amer. Math. Soc.},
  FJOURNAL = {Journal of the American Mathematical Society},
    VOLUME = {38},
      YEAR = {2025},
    NUMBER = {3},
     PAGES = {585--626},
      ISSN = {0894-0347,1088-6834},
   MRCLASS = {14J17 (13A18 14B05 14E30 32Q26)},
  MRNUMBER = {4890655},
       DOI = {10.1090/jams/1055},
       URL = {https://doi.org/10.1090/jams/1055},
}

@article{SZ26,
  title= {On complete Ricci--flat Kähler manifolds},
  author= {Sun, Song and Zhang, Junsheng},
  journal={\url{arXiv:2610.29891}},
  year={2026},
}

@article{Liu26,
  title= {Special valuations and automorphisms of affine log Calabi--Yau varieties},
  author= {Liu, Yuchen},
  journal={\url{arXiv:2609.22737}},
  year={2026},
}

@article {LX16,
    AUTHOR = {Li, Chi and Xu, Chenyang},
     TITLE = {Stability of valuations and {K}oll\'ar components},
   JOURNAL = {J. Eur. Math. Soc. (JEMS)},
  FJOURNAL = {Journal of the European Mathematical Society (JEMS)},
    VOLUME = {22},
      YEAR = {2020},
    NUMBER = {8},
     PAGES = {2573--2627},
      ISSN = {1435-9855,1435-9863},
   MRCLASS = {14J17 (13A18 14E20)},
  MRNUMBER = {4118616},
MRREVIEWER = {Jos\'e\ Mar\'ia\ Tornero},
       DOI = {10.4171/JEMS/972},
       URL = {https://doi.org/10.4171/JEMS/972},
}

\end{document}